    \definecolor{myblue}{rgb}{0.0352,0.4981,0.6509}
    \newcommand*\diff{\mathop{}\!\mathrm{d}}
    \newcommand{\E}[0]{\mathbb{E}}
    \newcommand{\hatw}[0]{\widehat{W}}
    \newcommand{\R}[0]{\mathbb{R}}
    \newcommand{\by}{{\mbf{y}}}
    \renewcommand{\P}[0]{\mathcal{P}}
    \newcommand{\hatrho}[0]{\widehat{\rho}}
    \newcommand{\RRd}[0]{\mathbb{R}^d\times \mathbb{R}^d}
    \newcommand{\hatmu}[0]{\widehat{\mu}}
    \newcommand{\N}[0]{\mathbb{N}}
    \newcommand{\norm}[1]{\|#1\|}
    \newcommand{\supp}[0]{\text{supp }}
    \newcommand{\W}[0]{\mathcal{W}}
    \newcommand{\hatphi}[0]{\widehat{\Phi}}
    \DeclareMathOperator*{\argmin}{arg\,min}
    \newcommand{\dd}{\mathrm{d}}
    \newtcolorbox{mybox}[1][]{
        width=\textwidth,
        boxsep=-0cm,
        toprule=0.1pt,
        leftrule=0pt,
        bottomrule=0.1pt,
        rightrule=0pt,
        fontupper=\fontsize{10pt}{10pt},
        nobeforeafter,
        opacityframe=0.3,
        opacityfill=0.15
    }
    \numberwithin{equation}{section}
    \newtheorem{theorem}{Theorem}[section]
    \newtheorem{remark}[theorem]{Remark}
    \newtheorem*{remark*}{Remark}
    \newtheorem{proposition}[theorem]{Proposition}
    \newtheorem{lemma}[theorem]{Lemma}
    \newtheorem{assumption}[theorem]{Assumption}
    \newcommand{\bx}{{\mbf{x}}}
    \newcommand{\mbf}[1]{\boldsymbol{#1}}
\title{Sparse identification of nonlocal interaction kernels in nonlinear gradient flow equations via partial inversion}
\author[1]{Jos\'e A. Carrillo}
\author[2]{Gissell Estrada-Rodriguez}
\author[1]{L\'aszl\'o Mikol\'as}
\author[3]{Sui Tang}
\affil[1]{Mathematical Institute, University of Oxford, Woodstock Road, Oxford, OX2 6GG, UK.}
\affil[2]{Department of Mathematics, Universitat Politecnica de Catalunya (UPC) Jordi Girona, 1-3, 08034, Barcelona, Spain}
\affil[3]{Department of Mathematics, University of California,
Santa Barbara, Isla Vista, CA 93117, USA.}
\begin{document}

\maketitle

\begin{abstract}
We address the inverse problem of identifying nonlocal interaction potentials in nonlinear aggre\-gation-diffusion equations from noisy discrete trajectory data.  Our approach involves formulating and solving a regularized variational problem, which requires minimizing a quadratic error functional across a set of hypothesis functions, further augmented by a sparsity-enhancing regularizer.  We employ a partial inversion algorithm, akin to the  
{CoSaMP and subspace pursuit algorithms,} to solve the Basis Pursuit problem. A key theoretical contribution is our novel stability estimate for the PDEs, validating the error functional ability in controlling the 2-Wasserstein distance between solutions generated using the true and estimated interaction potentials. Our work also includes an error analysis of estimators caused by discretization and observational errors in practical implementations. We demonstrate the effectiveness of the methods through various 1D and 2D examples showcasing collective behaviors.
\end{abstract}

\noindent\textbf{Keywords}: Inverse problem, aggregation-diffusion equation, basis pursuit, stability estimates, numerical simulations.
\\
\newline
\textbf{MSC}:{35Q70, 70F17, 70-08, 65F22}


\section{Introduction}
In this work, we investigate the estimation of interaction potentials for a broad spectrum of nonlocal equations with gradient flow structure \cite{carrillo2015finite,carrillo2019aggregation}. These equations can be written as
\begin{equation}\label{nonlocal}
\begin{cases}
\partial_t\mu =\nabla \cdot [\mu \nabla(H'(\mu)+V(\mathbf{x})+W*\mu)]\ , \quad \mathbf{x} \in \mathbb{R}^d\ , t>0\ ,\\
\mu(\mathbf{x},0) =\mu_0(\mathbf{x})\ ,
\end{cases}
\end{equation}
where $\mu(t, \mathbf{x})\geq 0$ denotes a probability measure; $H(\mu)$ denotes the density of internal energy; $V(\mathbf{x})$ is a confinement potential, and $W(\mathbf{x})$ is an interaction potential governing the nonlocal interaction rules.

Equation \eqref{nonlocal} arises in many applications, from porous medium flows \cite{vazquez2007porous,carrillo2000asymptotic,otto2001geometry} to the study of cell populations \cite{bodnar2006integro,gueron1995dynamics,CS18,carrillo2019population} passing by swarming models for animal movement \cite{topaz2006nonlocal,kolokolnikov2013emergent}. Notably, in cases where diffusion is absent ($H\equiv 0$), Equation \eqref{nonlocal} models aggregation behaviors of large number of particles \cite{VbUKB12,BCLR13,ABCvB14}.  With linear diffusion, where $H= \kappa\mu( \log \mu-1)$ with $\kappa$ the diffusion constant, it transforms into a Fokker-Planck equation with applications in opinion formation \cite{To06,FPTT17,GPY17}, finance \cite{sornette2001fokker,nicolis2011dynamical}, wealth distribution \cite{DMT08}, synchronization \cite{CCHKK14,CGPS18} and many other applications in kinetic theory. With nonlinear diffusion, $H(\mu) = \frac{\kappa\mu^m}{m-1}$ for $m> 1$, it relates to Keller-Segel type models in chemotaxis \cite{keller1971model,BDP06} with volume exclusion \cite{CC06,BCL09,CHMV18,CHVY19}.

A central problem in the qualitative analysis of \eqref{nonlocal}, which has garnered significant attention, is determining the criteria for the interaction potential $W$ that result in solutions exhibiting spontaneous pattern formation or self-organization \cite{VbUKB12,ABCvB14,carrillo2019population}.   Recent research suggest that even simple forms of interaction potentials, such as radial potentials denoted by $W(\mathbf{x}):=\Phi(\lvert\mathbf{x}\rvert)$, are capable of inducing complex collective behaviors \cite{leverentz2009asymptotic,bernoff2011primer,BCLR13,carrillo2014explicit}. Examples of such potentials include polynomial forms such as \( W = \frac{\lvert \mathbf{x} \rvert^3}{3} \) and the Morse potential \( W = -C_A e^{-\lvert \bx \rvert/\ell_A} + C_R e^{-\lvert \bx \rvert/\ell_R} \), which are crucial in modeling attractive and repulsive interactions among large groups of particles. In these numerical and theoretical studies, where the goal is often to reproduce the observed dynamics qualitatively, the interaction potential is often predetermined in an empirical way. 

Advancements in data acquisition technologies, such as digital imaging \cite{EW21} and GPS tracking \cite{nagy2010hierarchical,lukeman2010inferring,tunstrom2013collective}, have made  possible to collect density evolution data for large ensembles of particles leading to important advances such as topological interactions \cite{ballerini2008interaction}. This leads to an intriguing question: is it possible to deduce the interaction rules from such data?  Effective algorithms aligning Equation \eqref{nonlocal} with this observational data are essential. This paper delves into addressing this problem, with the goal of bridging the gap between theoretical models and empirical data.
We propose a variational approach to estimate the interaction potential from observed solution data, that accounts for both discretization errors and observation errors, as described by
\begin{align}\label{ObsData}
\{\rho(t_\ell,\bx_m)+\epsilon_{m}^\ell\}_{m=-M, \ell=1}^{M,L}\ ,
\end{align} 
where $\rho$ is the smooth density of $\mu$ in the sense explained in Section \ref{sec 2}; $(t_\ell,\bx_m)$ represents a uniform mesh in the domain, and $\{\epsilon_{m}^\ell\}$ is the discrete added noise. Specifically, the solution is sought through solving a quadratic minimization problem:
\begin{equation*}
\widehat W \in \argmin_{\Psi \in \mathcal{H}}\tilde{\mathcal{E}}_{\infty}(\Psi)\ ,
\end{equation*} 
with
\begin{equation*}
\tilde{\mathcal{E}}_{\infty}(\Psi)= 
\frac{1}{T}\int_0^T\int_{\mathbb{R}^d} \|\nabla\Psi*\rho-\nabla W*\rho\|^2 \rho(t,\bx)\dd\bx \diff t\ .   
\end{equation*}
Here, $\mathcal{H}=\textnormal{span}\{\Psi_i \}_{i=1}^n$ represents a hypothesis function space and $\widehat{W}$ is the estimated potential by our method. Due to the ill-posedness of the inverse problem \cite{lang2021identifiability,lu2021learning,li2021identifiability,tang2023identifiablility}, the solutions may not be unique or can not be stably recovered given the perturbed data.  We propose to regularize the inverse problem by promoting sparsity, motivated by the insight that many interaction potentials are simple functions sparse with respect to certain basis functions.

From an algorithmic perspective, our variational functional is composed of two key elements: a quadratic data fidelity term, that performs interaction force matching, and a sparsity-promoting regularizer. This formulation aligns with addressing a Basis Pursuit (BP) problem \cite{wright2022high}, commonly encountered  in the realm of compressed sensing. While numerous state-of-the-art algorithms exist for solving BP problems, finding an algorithm that is specifically tailored and effective for a given setting remains a significant challenge. 

One of our main contributions in this work is that we propose the PartInv (Partial Inversion) algorithm to solve the BP problem arising in our context. This algorithm excels at handling highly coherent columns in the regression matrix, a phenomenon frequently observed empirically across numerous physical examples, and in particular in the ones considered in Section \ref{sec: numerics}. Its effectiveness is further enhanced by incorporating support pruning (see Section \ref{sec: support_pruning}), which integrates residual data loss with time evolution error analysis. We have intensively tested our algorithm on both one and two dimensional examples, and the results demonstrate its remarkable effectiveness and superiority over alternative methodologies. Our work builds upon and extends the findings of \cite{lang2020learning}, which primarily focused on aggregation equations with linear diffusion and noise-free solution data in one dimension. We have also made contributions by integrating a distinct regularization technique and by extending our study to more complex scenarios, including those involving nonlinear diffusion terms and noisy data.

On the other hand, we also establish new stability estimates for \eqref{nonlocal},
controlling the 2-Wasserstein distance between the solution generated using $\widehat W$ and the solution generated  with $W$ in \eqref{nonlocal} in terms of the error functional $\tilde{\mathcal{E}}_{\infty}$.  This analysis, which is new and not present in relevant papers \cite{lang2020learning,bongini2017inferring}, reinforces the theoretical interpretability of our estimators in reproducing training data.
This stands in contrast with other residuals used in partial differential equation (PDE) discovery, such as those based on the strong or weak form of the PDEs, where no such interpretability exists. In particular, in the case of no diffusion, we show that the target functional $\mathcal{\tilde{E}}_\infty$ can be interpreted as the $\Gamma$-limit of a sequence of analogous error functionals $\mathcal{\tilde{E}}_N$ (see \eqref{eq:particle_error_functional}) which depend on sequences of approximating particle systems. In doing so, we are able to sharpen \cite[Theorem 1.1]{bongini2017inferring} by showing that the minimizer of $\mathcal{\tilde{E}}_\infty$ arising as the limit of a sequence of minimizers to $\mathcal{\tilde{E}}_N$ is the interaction potential driving the dynamics of the particle system in the $N\to \infty$ limit.

Finally, we conduct a comprehensive error analysis for the estimators. This analysis builds upon and extends the methodologies outlined in \cite{lang2020learning}. Our extension applies these methods to scenarios that include nonlinear and noisy solution data. This broader approach enables a more versatile application of the estimators, catering to a wider range of real-world conditions where noise and nonlinearity are common challenges.

Our work can be recast in the nowadays surging mathematical field arising from the blending of machine learning tools and numerical PDEs for the data-driven discovery of partial differential equations. This trend has received considerable attention in recent years aiming to autonomously decipher underlying dynamics from available data. This pursuit introduces a challenging inverse problem, where sparsity-promoting techniques have proven to be a potent means of uncovering robust estimators. Pioneering efforts, including the Sequentially Thresholded Least Squares (SINDy) \cite{rudy2017data} and variants of LASSO algorithms \cite{kang2021ident,rudy2019data}, as well as iterative greedy algorithms such as subspace pursuit \cite{he2020robust,he2023group} and advanced gradient descent algorithms solving 
$L^1$
  minimization \cite{schaeffer2017learning}, typically address the inverse problem by posing it as an optimization problem. Frequently, the strong form of a PDE is employed as the data fidelity term in the loss functional within these works. A novel approach utilizing the weak form of the PDE  \cite{messenger2021weak} has exhibited superior robustness to noise, mitigating its impact when approximating derivatives. Nevertheless, a drawback lies in the often problem-dependent theoretical foundation of these methods, lacking a comprehensive connection to the differential equation itself. While these methodologies possess a general applicability, when applied to specific types of differential equations, a nontrivial effort is needed to devise effective algorithms tailored to those particular equations.

A notably active research vein is the data-driven discovery within particle-based systems. For instance, \cite{he2022numerical} explored the identification of non-local potentials in aggregation equations by addressing a regularized $L^1$ minimization problem through PDE residuals, employing operator splitting techniques. Despite showcasing superior empirical performance, a theoretical understanding remains elusive. { In \cite{bunne2022proximal}, the authors proposed a method to reconstruct particle trajectories from snapshots, interpreting them as collective realizations of a causal JKO scheme \cite{jordan1998variational}. A similar idea was adopted in \cite{terpin2024learning} to learn diffusion terms from observational data, and in \cite{pietschmann2022data} for variational data assimilation for gradient flows.} In another instance, \cite{messenger2022learning} employed a weak SINDy approach to discern mean-field overdamped equations from particle-level data. This method contemplates input training data simulated from microscopic SDEs without external noise or microscopic ODEs  with external noise. The potential effectiveness of these approaches within our problem context is promising.

The work most closely aligned with ours is presented in \cite{lang2020learning}, which focuses on the nonparametric inference of non-local interaction potentials in aggregation equations with linear diffusion. It generalized the previous work on learning interaction kernels on  microscopic ODEs \cite{lu2019nonparametric,lu2020learning,lu2021learning, miller2023learning} and SDEs \cite{lu2020learning} to PDEs.   While using the same data-fidelity term in the loss functional, they employed Tikhonov regularization. They demonstrated that such data-fidelity term is, in fact, the maximum likelihood by looking at the connections with the microscopic SDE counterpart. Further, they show that the kernel identification in the mean-field equations is ill-posed \cite{lang2021identifiability}, requiring effective regularization techniques. 

Finally, we note that our identification problem bears significant resemblance to the deconvolution problem \cite{bigot2019estimation} encountered in image processing. In the latter, the objective is to recover the image from corrupted data samples, which are the result of convolving the image with a known kernel. In our context, we are concerned with solving a deconvolution problem constrained by a PDE, which introduces unique challenges. For example, the unknown coefficient is nonlinearly dependent on the observational data.  Consequently, traditional algorithms from image processing cannot be directly applied, demanding innovative approaches to navigate the complexities introduced by the PDE constraints.

The rest of this paper is organized as follows. In Section \ref{sec 2}, we introduce the notation and the mathematical set up of the inverse problem considered. In Section \ref{sec: stability estimates}, we present the stability estimates in terms of the error functional $\mathcal{\tilde{E}}_\infty$ as well as the $\Gamma$-limit result in the case of no-diffusion. In Section \ref{sec: algorithms}, we present the bounds on the numerical discretization errors incurred in the implementations of the solution method. We present numerical examples illustrating our results in Section \ref{sec: numerics}. In Section \ref{sec:conclusion_and_future_work} we present some conclusions and future perspectives. We include most proofs and auxiliary results in the appendices.

\section{A regularised variational approach via basis pursuit}\label{sec 2}

In this section, we describe the proposed method to identify the interaction potential from a single set of continuous-time trajectory data.  This approach entails addressing a variational problem which is comprised of a data-fidelity term for interaction force approximation and an $\ell^1$ 
  regularization term to promote sparsity. While acquiring continuous-time observational data is not feasible in real-world scenarios, the theoretical framework provided here forms the cornerstone for the computational estimators we later propose for discrete data.

  \subsection{Notation}\label{sec Notation}
 In what follows, unless specified otherwise, we use $\norm{\cdot}$ to denote the Euclidean norm in $\R^d$ or the Frobenius norm when treating matrices. In addition, $\|\cdot\|_p$ denotes the $p$ norm for a vector. When $p=0$, it means the number of nonzero entries in a vector. The complex transpose of $\mathbf{B}$ is denoted by $\mathbf{B}^*$, and its transpose by $\mathbf{B}^{\top}$.  {We use $\sigma_{min}(\mathbf{B})$ to denote the minimal singular value of $\mathbf{B}$}. The pseudo-inverse of $\mathbf{B}$ is represented as $\mathbf{B}^{+}$. For an index set $I \subset \{1, \cdots, p\}$, the submatrix of $\mathbf{B}$ formed by selecting row indices in $I$ is denoted by $\mathbf{B}_I$ and belongs to $\mathbb{R}^{|I| \times q}$, where $|I|$ represents the cardinality of the set $I$. We will denote by $\widetilde{I}$ the complement of the index set $I$, i.e. if $I \subset \{1,\ldots,p\}$, then $\widetilde{I} = \{1,\ldots,p\} \backslash I $. 
Let $\mathbf{B} \in \mathbb{R}^{p \times q}$ be a matrix. Let $\mathbf{c} \in \mathbb{R}^p$ denote a vector, then $\mathbf{c}(I) \in \mathbb{R}^{|I|}$ is the restriction of $\mathbf{c}$ on $I$. For integers $m,n,p$, we use a Matlab notation $m:p:n$ to represent the array with values starting at $m$, augmenting by $p$, and ending at or before $n$.

Other relevant notation used in this paper is summarized in Table \ref{tab: notation}.

\begin{table}[h!]
\begin{center}
\begin{tabular}{l l} 
\hline
Notation &  Description\\ [0.5ex] 
 \hline\specialrule{0.1em}{0.1em}{0em}
 $W(\bx):=\Phi(|\bx|)$ & Interaction potential \\ 
 \hline
 $\mathcal{W}^{k,p}(\R^d)$ & Sobolev space with $k$ derivatives in $L^p(\R^d)$ \\ 
 \hline
 $\rho(t,x)\diff \bx=\diff \mu$ & Solution of the PDE and its density \\ 
  \hline
 $\phi,\ \Phi$ & True interaction kernel and potential \\ 
  \hline
  $\psi,\ \Psi$ & Estimated interaction kernel and potential \\
 \hline
 $\mathcal{E}_\infty(\Psi)$ & Error functional, see \eqref{eq: errorfunctional} \\
 \hline
  $\mathcal{E}_{n,M,L}(\Psi)$ & Discretized error functional\\ 
\hline
{$F(\rho,\mathbf{x}):=\rho \nabla(H'(\rho)+V(\bx) )$}
  & {Local part of the flux}\\
  \hline
\end{tabular}
\caption{A first glance to the most important notations.}\label{tab: notation}
\end{center}
\end{table}
{ Finally, we note that for a curve $\gamma \in C([0,T],X)$ for any metric space $X$,  we will denote the evaluation of the curve at some time $t \in [0,T]$ as {$\gamma_t$ throughout this paper.} 
If $\gamma \in C([0,T],\P^2(\R^d))$,  where $ \P^2(\R^d)$ denotes the space of probability measures with finite second moments for any $t \in [0,T]$, we denote the $L^2$-norm with respect to this curve as 
\begin{equation*}
    \norm{f}^2_{L^2(\gamma_t)}:= \int_{\R^d}|f(\bx)|^2\dd \gamma_t(\bx) \ ,
\end{equation*}
and by $d_2:\P^2(\R^d) \times \P^2(\R^d) \to \R^d$ the 2-Wasserstein distance defined as 
\[
d_2(\mu,\xi):=\min \left\{\int_{\R^d \times \R^d}|\bx-\by|^2 \dd \gamma(\bx,\by) : \gamma \in \Pi(\mu, \xi)\right\}^{^{\frac{1}{2}}} \ ,
\]
where $\mu, \xi \in \P(\R^d)$, $\Pi(\mu,\xi):= \{\gamma \in \P^2(\R^d \times \R^d) \ | \ (\pi_{\bx})\#\gamma = \mu, \ (\pi_{\by})\#\gamma = \xi\}$ is the set of transport plans between the measures $\mu$ and $\xi$ and, for any measure $\nu \in \P(\R^d)$, and measurable set $A\subseteq \R^d$ $\pi_x{\#\nu}:=\nu(\pi_x^{-1}(A))$ is the push-forward measure by the projection map to the first coordinate given by $\pi_x(x,y) = x$ and analogously for $y$.}

\subsection{The error functional}
Let $\mu:[0,T] \times \mathbb{R}^d \rightarrow \mathbb{R}$ be a solution of the PDE \eqref{nonlocal} in which $W$ is the target interaction potential to be learned. For any $t\in [0,T]$, assume $\mu(t,\bx)=\mu_t(\bx)$ has a smooth density $\rho:[0,T] \times \R^d \to \R$ with respect to the Lebesgue measure, i.e. $\dd \mu_t = \rho(t,\bx) \dd \bx$, and it decays fast enough as $|x|\to\infty$ for all $t\in [0,T]$.  As the Equation \eqref{nonlocal} is linear in $W$, it can be written as 
\begin{equation}
    \partial_t \rho = \nabla\cdot (\rho \ L_{\rho}W + F(\rho,\mathbf{x}))\ ,
\end{equation} 
where $L_{\rho} W:=\nabla W*\rho$ and 
$F(\rho,\mathbf{x}):=\rho \nabla(H'(\rho)+V(\bx) )$.
Let us assume for simplicity that $W\in\mathcal{W}^{2,\infty}(\R^d):=\{b:\R^d \to \R \ | \ \norm{b}_\infty+\norm{\nabla b}_\infty + \norm{\nabla^2 b}_\infty < \infty \}$. It is obvious that the target interaction potential satisfies
\begin{equation}\label{eq: errorfunctionalidea} 
W\in\argmin_{\Psi \in \mathcal{W}^{2,\infty}(\R^d)}\tilde{\mathcal{E}}_{\infty}(\Psi)\ ,
\end{equation} 
with
\begin{equation}\label{eq: error_functional_tilde}
\tilde{\mathcal{E}}_{\infty}(\Psi)= 
\frac{1}{T}\int_0^T\int_{\mathbb{R}^d} \|L_{\rho}\Psi-L_{\rho}W\|^2 \rho(t,\bx)\dd\bx \diff t\ .   
\end{equation}
{ Using the weak formulation of the PDE \eqref{nonlocal} with the test function $ \Psi * \rho$, for $\Psi$ sufficiently smooth, we deduce
\begin{align*}
\tilde{\mathcal{E}}_{\infty}(\Psi)= &\, \frac{1}{T} \int_{0}^{T}\int_{\mathbb{R}^d}\left[\|L_\rho \Psi\|^2+\|L_\rho W\|^2 -2\langle L_\rho\Psi, L_\rho W\rangle\right]{ \rho(t,\bx)} \dd\bx \diff t\\
 =&\, \mathcal{E}_{\infty}(\Psi) + \frac{1}{T} \int_{0}^{T}\int_{\mathbb{R}^d}\|L_\rho W\|^2{ \rho(t,\bx)}\dd\bx \diff t\ ,
\end{align*}
} where
{
\begin{align} \mathcal{E}_{\infty}(\Psi):=
&\,
\, \frac{1}{T}\int_0^T\int_{\mathbb{R}^d}\Bigl[ \|L_{\rho}\Psi\|^2\rho(t,\bx) +2 \Psi*\rho\, \partial_t\rho+{ 2}\nabla\Psi*\rho\cdot F(\rho,\mathbf{x})\Bigr]\dd \bx \diff t . 
\end{align}
Notice that we prefer to use the weak solution concept of the PDE \eqref{nonlocal} to avoid the potential loss of regularity that happens for nonlinear degenerate diffusions at the tip of their supports, \cite{carrillo2019aggregation,vazquez2007porous}.} 
 
Given a finite dimensional subspace $\mathcal{H}=\textnormal{span}\{\Psi_i\}_{i=1}^n\subset \mathcal{W}^{2,\infty}(\R^d)$, we propose to approximate $W$ by minimizers of the following functional:
{
\begin{align}\label{eq: errorfunctional}  \widehat W\in &\argmin_{\Psi \in \mathcal{H}}\mathcal{E}_{\infty}(\Psi) \ , 
\\ 
 \mathcal{E}_{\infty}(\Psi):&={{\frac{1}{T}\int_0^T\int_{\mathbb{R}^d}\Bigl[ \|L_{\rho}\Psi\|^2\rho(t,\bx) +2 \Psi*\rho\, \partial_t\rho+{ 2}\nabla\Psi*\rho\cdot F(\rho,\mathbf{x})\Bigr]\dd \bx \diff t} \ , \nonumber
}\end{align}
}where $\widehat W$ is the identified potential by our method. We note that the error functional $\mathcal{E}_\infty$ promotes the matching of the interaction force with the ground truth. In fact, the previous computation shows that 
\begin{equation}\label{eq: errorfunctional1} \argmin_{\Psi \in \mathcal{H}}\mathcal{E}_{\infty}(\Psi)= \argmin_{\Psi \in \mathcal{H}}\tilde{\mathcal{E}}_{\infty}(\Psi) \ .
\end{equation} 
 In Section \ref{sec: stability estimates} we present stability estimates showing that the 2-Wasserstein distance between solutions of \eqref{nonlocal} corresponding to the ground truth interaction potential and the learned one can be controlled by the functional $\mathcal{\widetilde{E}}_\infty$. Namely, we  present results of the following type.
 \begin{proposition}\label{prop: informal_stability_estimate}
    Let $\mu, \widehat{\mu} \in C([0,T],\mathcal{P}^2(\R^d))$ be solutions of $\eqref{nonlocal}$ with the interaction potential and external potential $(W,V), (\widehat{W},\widehat{V})$ respectively. Then, under suitable regularity conditions on the velocity fields of $\mu$ and $\widehat{\mu}$, we have the following stability estimate 
    \begin{equation}\label{eq:stab_estimate_1}
    d^2_2(\mu(t),\hatmu(t)) \leq C \left(\tilde{\mathcal{E}}_\infty(\widehat{W}) + \int_0^t\norm{\nabla V - \nabla \widehat{V}}^2_{L^2(\mu_s)} ds+ d^2_2(\mu(0),\hatmu(0)) \right)\ ,
    \end{equation}
    where $C>0$ is a suitable constant depending only on the final time $T$ and the Lipschitz constant of the functions involved.
\end{proposition}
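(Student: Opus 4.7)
The plan is to track $d_2^2(\mu_t,\hatmu_t)$ along an optimal coupling, decompose the velocity-field difference into diffusion, confinement, and interaction contributions, and close the estimate via Gronwall's lemma. First, I would note that both $\mu$ and $\hatmu$ solve continuity equations driven by the velocity fields $\bv_t(\bx) = -\nabla H'(\rho_t)(\bx) - \nabla V(\bx) - (\nabla W * \rho_t)(\bx)$ and $\hat{\bv}_t$ (obtained by replacing $V, W, \rho_t$ with $\widehat{V}, \widehat{W}, \hatrho_t$). Under the stated regularity assumptions on the velocity fields, the standard Ambrosio-Gigli-Savar\'e differentiation formula along two continuity equations gives
\begin{equation*}
\frac{1}{2}\frac{d}{dt} d_2^2(\mu_t, \hatmu_t) \leq \int_{\RRd} \inp{\bv_t(\bx) - \hat{\bv}_t(\by), \bx - \by} \dd \gamma_t(\bx, \by),
\end{equation*}
for an optimal coupling $\gamma_t \in \Pi(\mu_t, \hatmu_t)$.

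Next, I would split the integrand into three pieces matching the diffusion, confinement, and interaction terms in the velocity fields. The diffusion piece $\int \inp{\nabla H'(\rho_t)(\bx) - \nabla H'(\hatrho_t)(\by), \bx - \by} \dd \gamma_t$ is non-negative by the displacement convexity of $\int H(\rho)\,\dd \bx$ under McCann's conditions on $H$ (satisfied by the internal energies considered in \eqref{nonlocal}), so it contributes non-positively to the upper bound and can be dropped. For the confinement and interaction pieces, I would split about the first coordinate,
\begin{align*}
\nabla V(\bx) - \nabla \widehat{V}(\by) &= [\nabla V(\bx) - \nabla \widehat{V}(\bx)] + [\nabla \widehat{V}(\bx) - \nabla \widehat{V}(\by)], \\
(\nabla W*\rho_t)(\bx) - (\nabla \widehat{W}*\hatrho_t)(\by) &= ((\nabla W - \nabla \widehat{W})*\rho_t)(\bx) + [(\nabla \widehat{W}*\rho_t)(\bx) - (\nabla \widehat{W}*\hatrho_t)(\by)],
\end{align*}
and estimate each summand separately. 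By Cauchy-Schwarz and Young's inequality, the ``pure-difference'' pieces feed into $\norm{\nabla V - \nabla \widehat{V}}^2_{L^2(\mu_t)}$ and $\norm{L_{\rho_t}W - L_{\rho_t}\widehat{W}}^2_{L^2(\mu_t)}$ respectively, while the ``Lipschitz'' pieces are bounded by constants times $d_2^2(\mu_t, \hatmu_t)$, using the Lipschitz regularity of $\nabla \widehat{V}$ directly and of $\nabla \widehat{W}$ through the coupling estimate $|(\nabla\widehat{W}*\rho_t)(\bx) - (\nabla\widehat{W}*\hatrho_t)(\by)| \leq \mathrm{Lip}(\nabla\widehat{W})\,(|\bx - \by| + d_2(\mu_t, \hatmu_t))$.

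Combining yields a differential inequality $\frac{d}{dt} d_2^2(\mu_t, \hatmu_t) \leq C_1 d_2^2(\mu_t, \hatmu_t) + \norm{\nabla V - \nabla \widehat{V}}^2_{L^2(\mu_t)} + \norm{L_\rho W - L_\rho \widehat{W}}^2_{L^2(\mu_t)}$, which by Gronwall on $[0,t]$, together with the identity $\int_0^T \norm{L_\rho W - L_\rho \widehat{W}}^2_{L^2(\mu_s)}\,\dd s = T\,\tilde{\mathcal{E}}_\infty(\widehat{W})$, delivers \eqref{eq:stab_estimate_1}. The hardest part will be the rigorous justification of the two analytic tools invoked here: the Wasserstein differentiation formula (which requires enough regularity on $\bv_t$ and $\hat{\bv}_t$ to apply the superposition principle), and the non-negativity of the diffusion contribution for potentially degenerate $H$ such as porous-medium nonlinearities. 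Both are classical under sufficient smoothness, but degenerate diffusion typically requires a smoothing/approximation procedure or a direct displacement-convexity computation in the JKO framework.
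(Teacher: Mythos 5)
Your proposal is correct and takes essentially the same route as the paper's proof of Proposition \ref{prop:non_linear_stability} (the nonlinear-diffusion stability estimate, proved in Appendix \ref{subsec:proof_stability_non_linear_diff}): the paper differentiates $d_2^2(\rho_t,\hatrho_t)$ along the two continuity equations via flow-map representations and a one-sided derivative at $\tau=0$ (your AGS differentiation formula is the same tool, packaged abstractly), drops the diffusion contribution by the displacement convexity of the internal energy (their Lemma \ref{lemma:internal_energy_estimate}), decomposes the confinement and interaction differences, estimates with Young's inequality and Lipschitz bounds, and closes with Gr\"onwall. One small difference: your coupling bound $|(\nabla\hatw*\rho_t)(\bx)-(\nabla\hatw*\hatrho_t)(\by)|\leq \mathrm{Lip}(\nabla\hatw)(|\bx-\by|+d_2(\mu_t,\hatmu_t))$ is more precise than the pointwise inequality $|\nabla\hatw*\hatrho_s(\by)-\nabla\hatw*\rho_s(\bx)|\leq L_{\hatw}|\by-\bx|$ that the paper states, which omits the $d_2$ contribution; after integrating against $\gamma_s$ both versions only differ by a constant in the Gr\"onwall factor, so this does not affect the conclusion. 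Note also that the paper actually treats the no-diffusion and nonlinear-diffusion cases separately (Propositions \ref{prop:Dobrushin_} and \ref{prop:non_linear_stability}), whereas your argument naturally covers both at once since displacement convexity holds trivially when $H\equiv 0$; the paper's separate linear-diffusion estimate (Proposition \ref{prop:agg_diff_estimate}) uses an SDE/BDG argument to handle a strictly more general solution-dependent diffusion coefficient $\sigma(K*\mu)$, which goes beyond \eqref{nonlocal} and is not captured by the displacement-convexity route.
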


{
Note that we can write the stability estimate in \eqref{eq:stab_estimate_1} in terms of $\tilde{\mathcal{E}}_\infty$ or $\mathcal{E}_\infty$ because of \eqref{eq: errorfunctional1}.} In our learning problem, we have $\widehat{V}=V$ and assume $W$ is the only unknown. The statement in Proposition \ref{prop: informal_stability_estimate} shows that the minimization of $\mathcal{\widetilde{E}}_\infty$ by \eqref{eq: errorfunctional1}  leads to minimization in the difference between trajectories of solution to \eqref{nonlocal} corresponding to the ground truth interaction potential $W$ and the estimated interaction potential $\widehat{W}$. In particular, whenever we have an estimator such that $\tilde{\mathcal{E}}_\infty(\widehat W)=0$, then $\hat \mu =\mu$ on $[0,T]$.  However, the error functional  $\tilde{\mathcal{E}}_\infty$ depends on the \textit{unknown} ground truth through the term $L_\rho W$, so it is not feasible in any practical computational scheme. Minimizing ${\mathcal{E}}_\infty$ enables practical implementation using only the data $\rho(t,\bx)$.  

\subsection{The vanilla least squares solutions}
In this study, our focus will be on identifying radial interaction potentials  $W$, but our computational framework can be extended to general potentials as well. From now on, we will always restrict to this class of potentials.

Let us assume that $\mathcal{H}$ is a linear subspace generated by the basis of radial functions $\textnormal{span}\{\Psi_i\}_{i=1}^n$. Let us introduce the notation $\nabla{\Psi_i}(\bx)=\psi_i(|\bx|)\frac{\bx}{|\bx|}$. 
Since $\mathcal{E}_{\infty}(\cdot)$ is a quadratic functional, then we can rewrite the minimization problem by means of a simple matrix representation. To simplify the notation, we omit the time dependence of the solution $\rho$ in the rest of the section. We  first introduce the following bilinear form 
\begin{align}\label{eq: computation of G}
    \langle\Psi_i,\Psi_j \rangle_G & =\frac{1}{T}\int_0^T\int_{\mathbb{R}^d}\langle L_{\rho}\Psi_i, L_{\rho}\Psi_j\rangle \rho\diff \bx\diff t\nonumber\\ &=\frac{1}{T}\int_0^T\int_{\mathbb{R}^d}\Bigl[\int_{\mathbb{R}^d}\int_{\mathbb{R}^d} \nabla{\Psi_i}(\textbf{y})\cdot\nabla{\Psi_j}(\textbf{z})\rho(t,\bx-\textbf{y})\rho(t,\bx-\textbf{z})\diff \textbf{y}\diff \textbf{z}\Bigr] \rho\diff \bx\diff t\nonumber\\ &=\frac{1}{T}\int_{\mathbb{R}^d}\int_{\mathbb{R}^d}\nabla{\Psi_i}(\textbf{y})\cdot\nabla{\Psi_j}(\textbf{z})\Bigl[\int_0^T\int_{\mathbb{R}^d} \rho(t,\bx-\textbf{y})\rho(t,\bx-\textbf{z})\rho(t,\bx)\diff \bx\diff t\Bigr]\diff \textbf{y}\diff \textbf{z}\nonumber\\ &=\frac{1}{T}\int_{\mathbb{R}^d}\int_{\mathbb{R}^d}\psi_i(|\textbf{y}|){\psi_j(|\textbf{z}|)}I_G(\textbf{y},\textbf{z})G(\textbf{y},\textbf{z})\diff \textbf{y}\diff \textbf{z}\ ,
\end{align} 
where $I_G(\textbf{y},\textbf{z})=\frac{\textbf{y}}{|\textbf{y}|}\cdot\frac{\textbf{z}}{|\textbf{z}|}$ with
\begin{equation}\label{ggg}
    G(\textbf{y},\textbf{z})=\int_0^T\int_{\mathbb{R}^d} \rho(t,\bx-\textbf{y})\rho(t,\bx-\textbf{z})\rho(t,\bx)\diff \bx\diff t\ ,
\end{equation}
for all $\textbf{y},\textbf{z}\in \mathbb{R}^d$.
Let $ \Psi=\sum_{i=1}^n {c}_i\Psi_i$, for $i=1,\cdots,n$\ ,\  and define
\begin{align}
    {A}_{ij}&=\langle\Psi_i,\Psi_j \rangle_G\ ,\label{eq: A new}
    \\
    b_i&={ -{\frac{1}{T}\int_0^T\int_{\mathbb{R}^d}\Bigl[ \partial_t\rho(\Psi_i*\rho)+(\nabla\Psi_i*\rho)\cdot{F}(\rho,\bx)\Bigr]\diff \bx\diff t}\ .\label{eq: b new}}
    \end{align}
    Notice that $b_i = \langle \Psi_i, W\rangle_G$ when we assume that $\rho $ is the exact solution to \eqref{nonlocal}. Then we can write the error functional as
    \begin{equation}
    \mathcal{E}_{\infty}(\mathbf{c})=\mathbf{c}^T\mathbf{A}\mathbf{c}-2\mathbf{b}^{T}\mathbf{c}\ .\label{eq: estimator matrix form}
    \end{equation}
    By first-order optimality, the optimal solutions satisfy the normal equation 
    \begin{align}\label{normalequation}
        \mathbf{A}\mathbf{c}=\mathbf{b}\ .
    \end{align}
In the context of inverse problems, the system introduced in \eqref{normalequation} often displays inherent challenges due to its ill-posed nature. Specifically, even in the scenario where \( W \) is an element of the Hilbert space \( \mathcal{H} \), uniqueness of the solution is not guaranteed. Moreover, the task of matrix inversion, particularly of \( \mathbf{A} \), is fraught with numerical instabilities, often enhanced by perturbations such as discrete-time data and observational noise. For an extended discussion on this topic, the reader may consult \cite{lang2021identifiability}.

Identifying effective regularization methods to stabilize the recovery process remains a critical challenge. Our numerical studies reveal that the regularized least squares estimator, derived using the pseudoinverse, did not perform satisfactorily. Lang et al. \cite{lang2020learning} explored a Tikhonov regularization for aggregation equations with linear diffusion. In this case, the standard least squares estimators, as per Equation \eqref{normalequation}, works as maximum likelihood estimators. However, this property does not extend to cases of nonlinear diffusion, necessitating the exploration of alternative regularization approaches.

\subsection{$\ell^1$ regularization via Basis Pursuit} \label{subsec: bp_intro}

We note that in many prototypical examples, the true interaction kernels are typically simple functions and are often sparse with respect to a set of given basis functions, such as polynomials. Building upon this prior knowledge, we propose estimating the interaction kernel by tackling the following BP problem:
\begin{equation}
\begin{aligned}\label{orignalbp}
    \text{minimize }_{\mathbf{c}\in \mathbb{R}^n} \|\mathbf{c}\|_1\ , \\
    \text{subject to } \mathbf{A}\mathbf{c} = \mathbf{b}\ .
\end{aligned}
\end{equation}
This approach seeks to minimize the \( \ell_1 \)-norm of the coefficient vector \( \mathbf{c} \) within the real vector space \( \mathbb{R}^n \), subject to the constraint that the product of matrix \( \mathbf{A} \) and vector \( \mathbf{c} \) equals the vector \( \mathbf{b} \).

BP problems have been actively studied in the area of compressed sensing \cite{wright2022high}. Many state-of-the-art algorithms  such as the CoSaMP algorithm \cite{needell2009cosamp} and the closely related subspace pursuit algorithm \cite{dai2009subspace} 
are designed to address the BP problem. These iterative {greedy} algorithms are particularly acclaimed for their superior  recovery in the noisy data regime, combined with their rapid computational efficiency. 
  However, their guaranteed performance often depends on certain properties of the sensing matrix \( \mathbf{A} \). Specifically, these properties include: 
  \begin{itemize}
      \item Coherence of \( \mathbf{A} \) should be small enough  (ideally, smaller than $\frac{1}{2\|\mathbf{c}\|_0}\leq \frac{1}{2}$), meaning that the maximal correlation between the normalized columns of \( \mathbf{A} \) is small (see Proposition 3.2 in \cite{wright2022high});
      \item \( \mathbf{A} \) should act almost as an isometry on the set of sparse vectors, a property known as the Restricted Isometry Property (RIP) \cite{candes2006stable, candes2006near}.
  \end{itemize}
  
Such characteristics are typically satisfied by random matrices. In our context, the sensing matrix \( \mathbf{A} \) is determined by the inherent physics of the problem and, consequently, is non-random. Our numerical experiments revealed that in all examples, the matrix \( \mathbf{A} \) possesses highly coherent columns {(where the coherence of a matrix is the maximum absolute correlation of its columns)}, so that the incoherence parameter is very close to 1 (see Figure \ref{fig:1d_incoherence}). Consequently, it fails to satisfy the desired RIP. Interestingly, similar challenges with the sensing matrix have been observed in super-resolution problems in imaging. How to perform sparse recovery with a coherent sensing matrix is still an on-going challenge in the signal and image processing community. Only a few works focus on addressing this issue and the algorithms are  heuristic  and short on theoretical justifications \cite{candes2011compressed,fannjiang2012coherence,chen2014guaranteed}.

We propose to use the PartInv Algorithm  \ref{alg:euclid}, a modification of the CoSaMP algorithm \cite{needell2009cosamp} to solve the BP with a coherent sensing matrix. PartInv was originally proposed in \cite{chen2014guaranteed} and showed better performance than existing greedy methods for random matrices, and is especially suitable for matrices that have subsets of highly correlated columns.
 Compared with CoSaMP, the only difference lies in  line 3 of Algorithm \ref{alg:euclid}, where $A_{I^{(k)}}^*$ is replaced by the pseudo-inverse $A_{I^{(k)}}^{+}$. This step can reduce the error propagation due to the coherent columns and one can refer to \cite{chen2014guaranteed} for more details.  Moreover, it enjoys partial theoretical justification. More precisely, \cite[Theorem 3.1]{chen2014guaranteed} provides a sufficient condition that uses a weaker condition than RIP and incoherence bounds to prove the success of the algorithm on sparse recovery. { Notice that, in the following algorithm, knowing the exact sparsity is not needed and we only require an upper bound denoted by $K$. We recall that for an index set $I\subset\{1,\ldots,n\}$, we denote $\widetilde{I}=\{1,\ldots,n\}\backslash I$.}

\begin{algorithm}
\caption{Given $\mathbf{Ac} = \mathbf{b}$ where the ground truth is $s$-sparse,  return  the best $K$-sparse approximation $\mathbf{\hat c} $ (see Section \ref{sec Notation} for the notation).}\label{alg:euclid}
\begin{algorithmic}[1]
\Require $\mathbf{A, b}, K$ (an upper bound on sparsity $s$)
\State $\mathbf{\tilde c} \leftarrow \mathbf{A}^* \mathbf{b}$; $I^{(0)} \leftarrow $ indices of the $K$-largest magnitudes of $\mathbf{\tilde c}$; $k \leftarrow 0$
\While{Stopping condition not met}
    \State $\mathbf{\tilde c}_{I^{(k)}} \leftarrow \mathbf{A}^{+}_{I^{(k)}} \mathbf{b}$
    \State $\mathbf{r} \leftarrow \mathbf{b} - \mathbf{A}_{I^{(k)}} \mathbf{\tilde c}_{I^{(k)}}$
    \State $J^{(k)} \leftarrow \widetilde{I}^{(k)}$  
    \State $\mathbf{\tilde c}_{J^{(k)}} \leftarrow \mathbf{A}^*_{J^{(k)}} \mathbf{r}$
    \State $I^{(k+1)} \leftarrow $ indices of $K$-largest magnitude components of $\mathbf{\tilde c}$
    \State $k \leftarrow k+1$ 
\EndWhile
\State \textbf{Return} $\mathbf{\hat c}=\mathbf{A}_{I^{(k)}}^{+}\mathbf{b}$
\end{algorithmic}
\end{algorithm}

\section{Stability estimates and $\Gamma$-convergence}\label{sec: stability estimates}

As we anticipated in the Introduction, in this section we present stability estimates for the 2-Wasserstein distance between solutions of \eqref{nonlocal} depending on the ground truth interaction potential and an interaction potential estimated with techniques such as Basis Pursuit (see Section \ref{sec 2}).  

 A focal point of interest is comparing solutions that arise from the ground truth interaction potential $W$ with those derived from the learned interaction potential $\widehat{W}$.  We present results for the nonlinear diffusion, which is the focus of this paper, as well as for the cases of no diffusion and linear diffusion. Although the results are analogous, the techniques involved in the proofs differ in each case. To improve the readability of the paper, we start with the simpler case of no diffusion and progressively increase the complexity, concluding with the nonlinear diffusion case.

\subsection{No diffusion, the aggregation equation case}\label{subsec:aggregation_eq}

Let us begin by considering the following interacting particle system 
\begin{equation}\label{eq:IPS}
   \dot \bx_{i}(t) = -\frac{1}{N}\sum_{i \neq j} \nabla W(\bx_{i}(t)-\bx_{j}(t)) - \nabla V(\bx_{i})\ , \quad  i = 1,\ldots,N \ ,
\end{equation}
for particles $(\bx_i)_{i=1}^N \in C([0,T],\R^d)$, an interaction potential $W \in \W^{2,\infty}(\R^d)$  and $V \in \W^{2, \infty}(\R^d)$, a confinement potential with $\supp W \subset \Omega$ and $\supp V \subset \Omega$ for a compact set $\Omega \subset \R^d$. Note that \eqref{eq:IPS} is an ODE system driven by a velocity field analogous to the one in \eqref{nonlocal}, where we have set $H \equiv 0$, i.e. there is no diffusion. 

Under our assumptions on the interaction potential $W$ and the confinement potential $V$, the system \eqref{eq:IPS} is well posed by traditional Cauchy-Lipschitz results for ODEs.
It can be shown that, as the number of particles $N\to \infty$, the sequence of empirical measures $\mu^N_t:= \frac{1}{N}\sum_{i=1}^N\delta_{\bx_i(t)}$ of the solutions to \eqref{eq:IPS} converges in the 2-Wasserstein distance to a probability measure $\mu_t \in \P^2(\R^d)$, where $ \P^2(\R^d)$ denotes the space of probability measures with finite second moments for any $t \in [0,T]$. In turn, the curve $\mu \in C([0,T], \P^2(\R^d))$ solves the following PDE giving a continuum description of the system \eqref{eq:IPS}
\begin{equation}\label{eq:aggregation_eq}
    \partial_t \mu =  \nabla \cdot ( \mu (\nabla W * \mu +  \nabla V))\ .
\end{equation}
 We refer the reader to \cite{golse2016dynamics} for more details. This is the mean-field PDE associated to the dynamical system \eqref{eq:IPS} and it is also referred to as the \textit{aggregation equation}.

In what follows, we will consider $W$ to be the ground truth interaction potential and we will write the error functional $\mathcal{\tilde{E}}_\infty$ in \eqref{eq: errorfunctional1} in terms of a general curve of measures $\mu\in C([0,T],\P^2(\R^d))$ solving \eqref{eq:aggregation_eq} as
\begin{equation}\label{eq:error_functional_mu}
    \mathcal{\tilde{E}}_\infty(\widehat{W})=\frac{1}{T}\int_0^T\int_{\mathbb{R}^d} |\nabla W*{ \mu_t}-\nabla \widehat{W}*\mu_t|^2 \dd \mu_t(\bx) \diff t \ ,
\end{equation}
for any ${\widehat{W}} \in \W^{2,\infty}(\R^d)$. Now we are ready to present our first Dobrushin-type stability result.

\begin{proposition}\label{prop:Dobrushin_}
Let $W,\hatw$, and ${V}$ belong to $\W^{2,\infty}(\R^d)$ with $\supp W,\  \supp V \subseteq \Omega$, where $\Omega\subset \R^d$ is a compact set. For initial data  $\mu_0,\ \widehat{\mu}_0\in \P^2(\R^d)$, let $\mu,\widehat{\mu} \in C([0,T],\P^2(\R^d))$ be solutions to the aggregation equation \eqref{eq:aggregation_eq} with velocity fields { $\nabla W* \mu + \nabla V, \ \nabla \hatw* \widehat{\mu} + \nabla {V}$}, respectively. Then, for any $t \in [0,T]$, we have
\[
d^2_2(\mu_t, \hatmu_t) \leq  C_1 \mathcal{\tilde{E}}_{\infty}( \hatw)   + C_2d_2^2(\mu_0, \widehat{\mu}_0)\ ,
\]
where $C_1$,  and $C_2$ are non-negative constants depending on $T$ and $L_W,L_{\widehat{W}},L_V$, the Lipschitz constants of $\nabla W$, $\nabla\hatw$ and $\nabla V$, respectively.
\end{proposition}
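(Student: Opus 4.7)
The plan is to use a coupling-based Dobrushin-type argument built on the characteristic flows of the two aggregation equations. Since the velocity fields $b_t(\bx):=-\nabla W*\mu_t(\bx)-\nabla V(\bx)$ and $\widehat{b}_t(\bx):=-\nabla\widehat{W}*\widehat{\mu}_t(\bx)-\nabla V(\bx)$ are Lipschitz in $\bx$ uniformly in $t$ (by the assumptions on $W,\widehat{W},V$ and standard convolution estimates), the corresponding flows $X_t$ and $\widehat{X}_t$ are well defined, and $\mu_t=X_t\#\mu_0$, $\widehat{\mu}_t=\widehat{X}_t\#\widehat{\mu}_0$. First I would fix an optimal transport plan $\pi_0\in\Pi(\mu_0,\widehat{\mu}_0)$ realising $d_2^2(\mu_0,\widehat{\mu}_0)$, push it forward by $(X_t,\widehat{X}_t)$ to obtain a coupling of $(\mu_t,\widehat{\mu}_t)$, and work with the quantity
\[
D(t):=\int_{\R^d\times\R^d}|X_t(\bx)-\widehat{X}_t(\widehat{\bx})|^2\,\dd\pi_0(\bx,\widehat{\bx})\ge d_2^2(\mu_t,\widehat{\mu}_t).
\]

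Next I would differentiate $D$ and perform the key add-subtract splitting of $b_t(X_t)-\widehat{b}_t(\widehat{X}_t)$ into three pieces: the \emph{model error} $A:=\nabla\widehat{W}*\mu_t(X_t)-\nabla W*\mu_t(X_t)$, the \emph{flow transport} piece $B:=\nabla\widehat{W}*\widehat{\mu}_t(\widehat{X}_t)-\nabla\widehat{W}*\mu_t(X_t)$, and the confinement piece $C:=\nabla V(\widehat{X}_t)-\nabla V(X_t)$. The splitting is chosen so that $A$ is precisely the integrand appearing in $\tilde{\mathcal{E}}_\infty(\widehat{W})$ when evaluated along the flow: by the pushforward,
\[
\int|A(X_t(\bx))|^2\,\dd\mu_0(\bx)=\int_{\R^d}|\nabla W*\mu_t(\bz)-\nabla\widehat{W}*\mu_t(\bz)|^2\,\dd\mu_t(\bz).
\]
The term $C$ is handled by the Lipschitz constant of $\nabla V$, giving $|C|\le L_V|X_t-\widehat{X}_t|$. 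The one requiring a little extra care is $B$: rewriting both convolutions as integrals against $\pi_0$ via the pushforward representation and applying Lipschitzianity of $\nabla\widehat{W}$ yields
\[
|B|\le L_{\widehat{W}}\Bigl(|\widehat{X}_t(\widehat{\bx})-X_t(\bx)|+\int|\widehat{X}_t(\widehat{\by})-X_t(\by)|\,\dd\pi_0(\by,\widehat{\by})\Bigr),
\]
and a Jensen/Cauchy--Schwarz step bounds $\int|B|^2\,\dd\pi_0\le 4L_{\widehat{W}}^2 D(t)$.

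Combining the three pieces via Cauchy--Schwarz/Young in $\tfrac{d}{dt}D(t)=2\int(X_t-\widehat{X}_t)\cdot(b_t(X_t)-\widehat{b}_t(\widehat{X}_t))\,\dd\pi_0$ produces a differential inequality of the form
\[
\tfrac{d}{dt}D(t)\;\le\; \alpha\, D(t)\;+\;\int_{\R^d}|\nabla W*\mu_t-\nabla\widehat{W}*\mu_t|^2\,\dd\mu_t,
\]
with $\alpha$ depending only on $L_{\widehat{W}}$ and $L_V$. Integrating via Gr\"onwall's inequality and using $D(0)=d_2^2(\mu_0,\widehat{\mu}_0)$ together with the identification of the time integral of the source term with $T\,\tilde{\mathcal{E}}_\infty(\widehat{W})$ yields exactly the claimed bound with $C_1=Te^{\alpha T}$ and $C_2=e^{\alpha T}$.

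The step I expect to require the most care is the handling of $B$: the two convolutions are taken against \emph{different} measures at \emph{different} base points, so the splitting must be chosen so that $A$ matches $\tilde{\mathcal{E}}_\infty(\widehat{W})$ exactly (which forces the intermediate term $\nabla\widehat{W}*\mu_t(X_t)$ rather than, e.g., $\nabla W*\widehat{\mu}_t(\widehat{X}_t)$), while still allowing $B$ to be closed in terms of $D(t)$ alone via the coupling. The role of $L_W$ in the final constants enters only through the Lipschitzianity that guarantees existence and uniqueness of the flow $X_t$ generating $\mu_t$; once $X_t$ is fixed the bound on $A$ is algebraic, and only $L_{\widehat{W}}$ and $L_V$ appear multiplicatively in the Gr\"onwall exponent.
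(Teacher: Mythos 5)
Your proof is correct, and it is a valid Dobrushin-type argument, but it is structured somewhat differently from the paper's. The paper first splits $d_2^2(\mu_t,\hatmu_t)$ into two pieces by inserting $\widehat{\Phi}_t\#\mu_0$: a term comparing the two flows launched from the \emph{same} measure $\mu_0$, and a term comparing the \emph{same} flow $\widehat{\Phi}_t$ applied to $\mu_0$ and $\widehat{\mu}_0$. The second piece is handled immediately via Lipschitzness of $\widehat{\Phi}_t$ and gives the $d_2^2(\mu_0,\widehat{\mu}_0)$ contribution; the first piece is then bounded through an add-and-subtract of the velocity fields and Gr\"onwall is applied twice (once for the flow-difference integral, once for the resulting $d_2^2(\mu_s,\widehat{\mu}_s)$ that re-enters the bound). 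You instead fix an optimal plan $\pi_0$ once, define the single coupled quantity $D(t)=\int|X_t(\bx)-\widehat{X}_t(\widehat{\bx})|^2\,\dd\pi_0$, and derive a single differential inequality for $D$, absorbing what the paper treats as a separate "same-flow/different-data" term into your transport piece $B$. The payoff is a cleaner closed Gr\"onwall loop with one application, plus the sharper observation — only implicit in the paper — that only $L_{\widehat{W}}$ and $L_V$ enter the exponential rate, while $L_W$ is needed merely to guarantee well-posedness of the flow $X_t$. (Incidentally, your route sidesteps the paper's use of $d^2(a,c)\le d^2(a,b)+d^2(b,c)$, which strictly speaking needs a factor of $2$; since that only affects constants it does not invalidate the paper's argument, but it is a small advantage of the single-coupling formulation.) The intermediate estimates you give for $A$, $B$, $C$ — in particular the two-argument representation of $B$ via $\pi_0$ and its closure $\int|B|^2\,\dd\pi_0\le 4L_{\widehat{W}}^2 D(t)$ — are the crux, and they check out.
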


\begin{proof}
    We refer the reader to Appendix \ref{subsec:proof_stability_agg_eq} for the proof of Proposition \ref{prop:Dobrushin_}. 
\end{proof}

\begin{remark}
    Note that the estimate derived in the Appendix \ref{subsec:proof_stability_agg_eq} allows both the interaction and the confinement potentials to differ in \eqref{eq:aggregation_eq},
        \[
d^2_2(\mu_t, \hatmu_t) \leq  C_1 \mathcal{\tilde{E}}_{\infty}( \hatw)+ C_3 \int_0^t\norm{\nabla V - \nabla \widehat{V}}^2_{L^2({\mu}_s)} \dd s   + C_2d_2^2(\mu_0, \widehat{\mu}_0)\ ,
\]
where $C_3$ also depends on $L_{\widehat{V}}$. Thus, with minor modifications to the error functional $\tilde{\mathcal{E}}_{\infty}$ to account for the difference between $V$ and $\widehat{V}$, we could control the 2-Wasserstein distance between $\mu$ and $\hatmu$ in terms of {that} new error functional and the difference in the initial data. This would be particularly relevant if the goal was to infer both the interaction as well as the confinement potential from trajectories of the PDE. We leave this for future work and, in what follows, the reader can set $V=\widehat{V}$ in the result above.

\end{remark}

\begin{remark}\label{remark:use_of_true_or_estimated_measure}
    We note furthermore that an analogous result holds in which the $L^2$ norms depend on the estimated solution $\hatmu$, instead of the ground truth solution $\mu$. Thus, we could rewrite the estimate in Proposition \ref{prop:Dobrushin_} to depend on the minimum of the two norms in each case, but opted for the present specification for the sake of clarity.   
\end{remark}

{ In the following subsections, we provide stability estimates that extend beyond merely estimating the potential $W$. While the numerical section focuses solely on estimating $W$, these broader results in Propositions \ref{prop:agg_diff_estimate} and \ref{prop:non_linear_stability}  pave the way for future research.}

\subsubsection{Mean-field dynamics and $\Gamma$-convergence}
In what follows, without loss of generality, we will assume $V\equiv0$.
Using the stability estimate from Proposition \ref{prop:Dobrushin_}, in this section we present a $\Gamma${-convergence result, i.e. establishing the minimizer of a functional as the limit of  minimizers of a sequence of functionals}, as well as the sharpening of \cite[Theorem 1.1]{bongini2017inferring}. We remark that in \cite{bongini2017inferring} the setting is slightly different to ours, since the error functional considered by the authors depends on the interaction kernel $\nabla W$, instead of the potential $W$. For the reader's convenience, we begin by recalling the notation in \cite{bongini2017inferring} adapted to our setting.  For a compact set $K \subset \R^d$ let 
\[
X_{M,K}:=\{b\in \W^{2,\infty}(K): { \norm{b}_\infty + \norm{\nabla b}_\infty  + \norm{\nabla^2 b}_\infty\leq M} \}\ ,
\]
and $(A^N)_{N\in\N}$ be a family of closed subsets of $X_{M,K}$ with the uniform approximating property { in $L^\infty(K)$}, i.e. for any $b \in X_{M,K}$ there exists a sequence $(b^N)_{N\in\N}$ converging uniformly to $b$ on $K$, such that $b^N \in A^N$ for every $N \in \N$. The authors considered a sequence $(\widehat{W}^N)_{N \in \mathbb{N}}\in A^N$ of minimizers of the following functional
\begin{equation}\label{eq:particle_error_functional}
\mathcal{\tilde{E}}_N({\widetilde{W}}):=\frac{1}{T}\int_0^T\int_{\R^d}\bigg|(\nabla\widetilde{W} -\nabla W)*\mu^N_s(\bx)\bigg|^2\dd \mu^N_s( \bx) \dd s \ , 
\end{equation}
where, as before, $ W \in X_{M,K}$ is the true interaction potential of a system like \eqref{eq:IPS} and $\mu^N(t)=\frac{1}{N}\sum_{i=1}^N\delta_{\bx_i(t)}$ is the empirical measure associated to \eqref{eq:IPS} with estimated interaction potential $\hatw^N$. In \cite[Theorem 1.1]{bongini2017inferring}, it is shown that if $( \widehat{W}^N)_{N\in\N} \in A^N$ is a sequence of minimizers of $\mathcal{\tilde{E}}_N$, this sequence has a uniformly converging subsequence to a function $\overline{W} \in X_{M,K}$. Furthermore, $ \overline{W}$ is a minimizer of the limiting functional $\mathcal{\tilde{E}}_\infty( \widetilde{W})$ in \eqref{eq:error_functional_mu} where $\mu_t$ is the solution of the mean-field PDE arising as the limit of the sequence $(\mu_t^N)_{N\in\N}$ in the $d_2$ distance. { We note that although \cite[Theorem 1.1]{bongini2017inferring} presents the $\Gamma$-convergence result, it does not show that, as one would formally expect, 
\[
\partial_t \mu^N = \nabla \cdot(\mu^N(\widehat{W}^N * \mu^N)) \overset{N\to\infty}{\longrightarrow} \partial_t \hatmu = \nabla \cdot(\hatmu(\overline{W} * \hatmu)) \ .
\]}
Using the estimate from Proposition \ref{prop:Dobrushin_}, we bridge the aforementioned gap by additionally showing that the mean-field limit of the interacting particle system 
\begin{equation}\label{eq:IPS_no_confinment}
        \dot \bx_i(t) = -\frac{1}{N}\sum_{i \neq j} \nabla\hatw^N(\bx_i(t)-\bx_j(t))\ ,
\end{equation}
is given by 
\[
\partial_t \hatmu = \nabla \cdot ( \hatmu (\nabla \overline{W} * \hatmu)) \ ,
\]
i.e. the velocity field of the limiting PDE depends on $\overline{W}$, the limit of the minimizing sequence of the functional \eqref{eq:particle_error_functional}. { This result is relevant because it confirms that the learned interaction potential will be the same regardless of whether one uses data from solutions to \eqref{eq:IPS} or \eqref{eq:aggregation_eq} when there is no diffusion.} { Furthermore, as in \cite[Theorem 1.1]{bongini2017inferring}}, under the additional \textit{coercitivity condition}, i.e. that there exists a constant $c_T>0$ such that  
\[
    { c_T \frac{1}{T}\int_0^T\norm{(\nabla\overline{W} - \nabla W)*\mu_t( x)}_{L^2(\mu_t)}^2 \dd t\leq  \mathcal{\tilde{E}}_\infty(\overline{W})}
\]
we get that $W = \overline{W}$ in $L^2(\mu)$ and thus $\hatmu = \mu$ in $(\P^2(\R^d), d_2)$, for any $t \in [0,T]$. We make these remarks precise in the following proposition. 

\begin{proposition}
    Let $ W\in X_{M,K}$ be the true interaction potential governing the particle system \eqref{eq:IPS} and $(A^N)_{N\in\N}\subset X_{M,K}$ be a family with the uniform approximating property. Consider a sequence of minimizers  $( \hatw^N)_{N \in \N}\in (A^N)_{N \in N}$ of the functional \eqref{eq:particle_error_functional} with limit $ \overline{W} \in X_{M,K}$. Let $\hatmu(0) \in \P^2(\R^d) $ with compact support be given, and $(\hatmu^N(0))_{N\in\N}$ be a
    sequence of empirical measures 
    \begin{equation*}
        \hatmu^N(0) = \frac{1}{N}\sum_{i=1}^N \delta_{\bx_{0,i}} \ , i =1,\ldots, N\ ,
    \end{equation*}
    such that $\lim_{N \to \infty}d_2(\hatmu^N(0),\hatmu(0)) = 0$. Let $(\bx_i(t))_{i=1}^N\in\R^d$ be the unique solution to the particle system   
    \begin{equation}\label{eq:estimated_part_syst}
   \dot \bx_i(t) = -\frac{1}{N}\sum_{i \neq j} \nabla\hatw^N(\bx_i(t)-\bx_j(t)), \quad  \bx_i(0) = \bx_{0,i} \quad \text{ for } i=1,\ldots,N \ . 
    \end{equation} 
     Then, the mean-field limit of the system \eqref{eq:estimated_part_syst} is given by 
     \[
         \partial_t \hatmu =  \nabla \cdot ( \hatmu (\nabla \overline{W} * \hatmu))\ .
     \]
 Furthermore, if the coercitivy condition holds and the true system has initial condition $\mu(0)= \hatmu(0) \in \P^2(\R^d)$, we have that $W = \overline{W}$ in $L^2(\mu)$.
\end{proposition}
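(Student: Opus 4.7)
The plan is to establish the two parts of the statement in turn. For the mean-field limit, let $\hatmu^N_t := \frac{1}{N}\sum_{i=1}^N \delta_{\bx_i(t)}$ denote the empirical measure associated with \eqref{eq:estimated_part_syst}, which is a distributional solution of \eqref{eq:aggregation_eq} driven by the potential $\hatw^N$ (with $V\equiv 0$). I would introduce an auxiliary curve $\widetilde{\mu}^N \in C([0,T],\P^2(\R^d))$, defined as the solution of \eqref{eq:aggregation_eq} with potential $\overline{W}$ and initial datum $\widetilde{\mu}^N(0) = \hatmu^N(0)$, and then split
\[
d_2(\hatmu^N_t,\hatmu_t) \leq d_2(\hatmu^N_t,\widetilde{\mu}^N_t) + d_2(\widetilde{\mu}^N_t,\hatmu_t)\ .
\]
Two applications of Proposition \ref{prop:Dobrushin_} handle the summands: the pair $(\hatmu^N,\widetilde{\mu}^N)$ shares the initial data but differs in potential, so that $d_2^2(\hatmu^N_t,\widetilde{\mu}^N_t) \leq C_1 \frac{1}{T}\int_0^T\int |(\nabla\hatw^N - \nabla\overline{W})*\hatmu^N_s|^2 \dd\hatmu^N_s \dd s$, while the pair $(\widetilde{\mu}^N,\hatmu)$ shares the potential $\overline{W}$ and gives $d_2^2(\widetilde{\mu}^N_t,\hatmu_t) \leq C_2 d_2^2(\hatmu^N(0),\hatmu(0))$. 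The uniform $\W^{2,\infty}$ bound built into $X_{M,K}$ makes $C_1,C_2$ independent of $N$.

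To close the first summand I would upgrade the uniform convergence $\hatw^N \to \overline{W}$ to uniform convergence of gradients. Since $(\hatw^N)\subset X_{M,K}$ has equi-bounded second derivatives, $(\nabla\hatw^N)$ is equi-Lipschitz and uniformly bounded; Arzelà--Ascoli together with the identification of the distributional derivative of the uniform limit forces $\nabla\hatw^N \to \nabla\overline{W}$ uniformly. Hence the right-hand side of the first Dobrushin estimate is controlled by $\|\nabla\hatw^N-\nabla\overline{W}\|_\infty^2\to 0$, and combined with the hypothesis $d_2(\hatmu^N(0),\hatmu(0))\to 0$ one obtains $d_2(\hatmu^N_t,\hatmu_t) \to 0$ uniformly on $[0,T]$, which is the desired mean-field limit.

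For the coercitivity claim I would exploit the minimization property of $\hatw^N$. By the uniform approximating property of $(A^N)$, choose $W^N\in A^N$ with $W^N\to W$ uniformly; by the same Arzelà--Ascoli argument, $\nabla W^N \to \nabla W$ uniformly. Since $\hatw^N$ minimizes $\mathcal{\tilde{E}}_N$ on $A^N$,
\[
\mathcal{\tilde{E}}_N(\hatw^N) \leq \mathcal{\tilde{E}}_N(W^N) \leq \|\nabla W^N-\nabla W\|_\infty^2 \longrightarrow 0\ .
\]
Passing to the limit using $\hatw^N\to\overline{W}$ uniformly, the classical mean-field convergence $\mu^N\to\mu$ in $d_2$ for the true system (with the shared initial data $\mu(0)=\hatmu(0)$), and joint continuity of the error functional with respect to these topologies, yields $\mathcal{\tilde{E}}_\infty(\overline{W})=\lim_N \mathcal{\tilde{E}}_N(\hatw^N)=0$. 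The coercitivity condition then forces $W=\overline{W}$ in $L^2(\mu)$.

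The main obstacle I anticipate is the upgrade from uniform convergence of potentials to uniform convergence of gradients, which is essential because the convolution-against-measure term inside $\mathcal{\tilde{E}}_\infty$ is non-linear and does not stabilize under mere weak convergence of gradients. The $\W^{2,\infty}$ bound inherent to $X_{M,K}$ is precisely what enables this upgrade via Arzelà--Ascoli, and it also keeps the stability constants $C_1,C_2$ from blowing up along the sequence; without it, both the Dobrushin step and the limit of the minimization values would be significantly harder to justify.
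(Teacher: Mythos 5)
Your proof is correct and tracks the paper's argument closely: both hinge on the Dobrushin-type stability estimate of Proposition~\ref{prop:Dobrushin_}, the observation that the resulting error term is bounded by $\|\nabla\hatw^N-\nabla\overline{W}\|_\infty^2$, and the coercivity condition for the second claim. The one structural difference is that you interpolate through an auxiliary solution $\widetilde{\mu}^N$ with potential $\overline{W}$ and initial datum $\hatmu^N(0)$, applying the stability estimate twice via the triangle inequality; the paper applies Proposition~\ref{prop:Dobrushin_} (together with Remark~\ref{remark:use_of_true_or_estimated_measure}) once, directly to the pair $(\hatmu^N,\hatmu)$, since that estimate already handles simultaneously differing potentials and differing initial data. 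Your intermediate curve therefore costs an extra step but changes nothing of substance. You also make explicit two points the paper leaves implicit behind the citation to \cite[Theorem~1.1]{bongini2017inferring}: the Arzel\`a--Ascoli upgrade from uniform convergence of $\hatw^N$ to uniform convergence of $\nabla\hatw^N$ (for which the uniform $\mathcal{W}^{2,\infty}$ bound in $X_{M,K}$ is indeed the crucial ingredient), and the argument via the uniform approximating property and a comparison sequence $W^N\to W$ that $\mathcal{\tilde{E}}_N(\hatw^N)\to 0$, hence $\mathcal{\tilde{E}}_\infty(\overline{W})=0$. These fleshed-out steps are correct and make your version slightly more self-contained than the paper's.
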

\begin{proof}
    By Remark \ref{remark:use_of_true_or_estimated_measure}, we can apply our stability estimate from Proposition \ref{prop:Dobrushin_} with each norm depending on $\hatmu^N$, thus obtaining 
    \begin{align*}
        d^2_2(\hatmu^N_t,\hatmu_t) \leq C_1\mathcal{\tilde{E}}_{N}({\hatw}^N)  + C_2 d_2^2(\widehat{\mu}^N_0, \hatmu_0)\ ,
    \end{align*}
where 
 \[
         \partial_t \hatmu =  \nabla \cdot ( \hatmu (\nabla \overline{W} * \hatmu))\ .
 \]
Note that we have the following bound for the first term for any $t \in [0,T]$
\[
\mathcal{\tilde{E}}_{N}({\hatw}^N) = \|\nabla \overline{W}*\hatmu^N - \nabla \hatw^N*\hatmu^N \| ^2 _{L^2(\widehat{\mu}^N_t)} \leq \norm{\nabla \overline{W} - \nabla \hatw^N}^2_{\infty}\ .
\]
Then, by our assumption on the initial conditions and \cite[Theorem 1.1]{bongini2017inferring} we have that 
\[
\lim_{N\to\infty}d^2_2(\hatmu^N_t,\hatmu_t) = 0 \ ,
\]
which gives the first part of our statement. Finally, by the coercitivity condition we have that 
    \[
{ c_T \frac{1}{T}\int_0^T\norm{(\nabla\overline{W}- \nabla W)*\mu_t( x)}^2_{L^2(\mu_t)}\dd t \leq  \mathcal{\tilde{E}}_\infty( \overline{W})\ ,}
    \]
and, since $\overline{W}$ is a minimizer of $\mathcal{\tilde{E}}_\infty$, we can conclude that $W = \overline{W}$ in $L^2(\mu_t)$. 
\end{proof}

\begin{remark}
    The result is not generalized to the cases with diffusion because the method of proof of \cite[Theorem 1.1]{bongini2017inferring} requires the solution of \eqref{nonlocal} to have compact support. This is not guaranteed if diffusion is present. The extension for the cases of linear or nonlinear diffusion are left for future work. 
\end{remark}

\subsection{Linear diffusion}\label{subsec:aggregation_diff_eq}

Next, we consider an extension of the stability estimate of Proposition \ref{prop:Dobrushin_} for the aggregation-diffusion equation with linear diffusion. Namely, let $\mu \in C([0,T],\P^2(\R^d))$ be the weak solution of the following equation
\begin{align}
    \partial_t \mu &= \nabla \cdot ( \mu (\nabla W * \mu)) + \sum_{i,j=1}^d\partial^2_{x_i,x_j}[\sigma(K*\mu)^{\top}\sigma(K*\mu) \mu]\ , \label{eq:true_agg_diff}
\end{align}
 where, as before, $W$ is the interaction potential, $\sigma$ is the diffusion coefficient which is allowed to depend on the solution $\mu$ through its convolution with a kernel $K$, and the superscript $\top$ denotes the transpose of a matrix as before. We recall that \eqref{eq:true_agg_diff} can be interpreted as the evolution of the law of the solution of the following stochastic differential equation (SDE) \cite{carmona2016lectures,chapter1991sznitman}
\begin{align}\label{eq:non_linear_SDE}
    d X_t &=\nabla W * \mu_t(X_t) d t + \sqrt{2}\sigma(K*\mu_t(X_t)) d B_t \ , \nonumber
    \\
    X_0 &= X^0 \in L^2 \text{ independent of } (B_t)_{t \in[0,T]}\ ,
\end{align}
where $(B_t)_{t\in[0,T]} \in \R^d$ is a Brownian motion. 
We present now our stability estimate for this case.

\begin{proposition}
\label{prop:agg_diff_estimate} 
Let $\mu, \widehat{\mu} \in C([0,T], \P^2(\R^d))$ be weak solutions to \eqref{eq:true_agg_diff} with coefficients $\nabla W, \sigma(K)$ and $\nabla\hatw, \widehat{\sigma}(\widehat{K})$, respectively, where all the functions satisfy our Assumption \ref{assumptions:aggregation-diffusion} in Appendix \ref{subsec:stability_estimate_agg_diff_eq}. Let $X_t$ denote the solution to \eqref{eq:non_linear_SDE} and $\widehat{X}_t$ denote the solution to an analogous SDE with coefficients, {$ \hatw$, $\widehat{\sigma}$ and $\widehat{K}$}.
Furthermore, assume that the initial data $X_0$ and $\widehat{X_0}$ are chosen such that $d^2_2(\mu(0),\widehat{\mu}(0)) = \E|X_0 - \widehat{X}_0|^2$. Then we have the following stability estimate
    \[
d_2^2(\mu_t, \widehat{\mu}_t) \leq C(T) \left({ d_2^2(\mu_0,\widehat{\mu}_0)} +  \mathcal{\tilde{E}}_\infty({ \widehat{W}})  +\norm{\sigma - \widehat{\sigma}}^2_\infty + \int_0^t \norm{(K - \widehat{K})*\mu_s}^2_{L^2({\mu}_s)} \dd s\right) \ ,
\]
where $C(T)$ is a non-negative constant depending on $T$ and the Lipschitz constants of $K,\widehat{K}, \sigma, \widehat{\sigma}$ and $W$.
\end{proposition}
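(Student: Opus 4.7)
The plan is to use a synchronous coupling argument. Let $(X_t)_{t\in[0,T]}$ denote the solution of \eqref{eq:non_linear_SDE} and $(\widehat{X}_t)_{t\in[0,T]}$ the solution of the analogous nonlinear SDE with coefficients $(\widehat{W}, \widehat{\sigma}, \widehat{K})$, both driven by the \emph{same} Brownian motion $(B_t)_{t\in[0,T]}$ and with the initial pair $(X_0,\widehat{X}_0)$ realizing an optimal coupling between $\mu_0$ and $\widehat{\mu}_0$, as assumed in the statement. Since $\mu_t$ and $\widehat{\mu}_t$ are the laws of $X_t$ and $\widehat{X}_t$, we have $d_2^2(\mu_t,\widehat{\mu}_t)\leq \E|X_t-\widehat{X}_t|^2$, so it suffices to control the right-hand side. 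Applying It\^o's formula to $|X_t-\widehat{X}_t|^2$ and taking expectation---the stochastic integral being a true martingale thanks to the common Brownian motion---yields
\[
\E|X_t-\widehat{X}_t|^2 = \E|X_0-\widehat{X}_0|^2 + 2\int_0^t \E\langle X_s-\widehat{X}_s,\, b_s-\widehat{b}_s\rangle\, \dd s + 2\int_0^t \E\|\Sigma_s-\widehat{\Sigma}_s\|^2\, \dd s\ ,
\]
with shorthand $b_s:=\nabla W * \mu_s(X_s)$, $\widehat{b}_s:=\nabla\widehat{W}*\widehat{\mu}_s(\widehat{X}_s)$, $\Sigma_s:=\sigma(K*\mu_s(X_s))$, and $\widehat{\Sigma}_s:=\widehat{\sigma}(\widehat{K}*\widehat{\mu}_s(\widehat{X}_s))$.

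The key step is a three-term split of $b_s-\widehat{b}_s$ obtained by inserting the intermediate quantities $\nabla\widehat{W} * \mu_s(X_s)$ and $\nabla\widehat{W} * \mu_s(\widehat{X}_s)$. The first piece $(\nabla W-\nabla\widehat{W})*\mu_s(X_s)$, once squared and integrated in time, produces precisely the error functional contribution $T \cdot \mathcal{\tilde{E}}_\infty(\widehat{W})$, since $X_s\sim \mu_s$. The second piece is spatially Lipschitz in $\widehat{W}$, contributing a term bounded by $L_{\widehat{W}}|X_s-\widehat{X}_s|$. For the third piece, choosing any coupling $\gamma\in \Pi(\mu_s,\widehat{\mu}_s)$ gives $|\nabla\widehat{W}*\mu_s(\widehat{X}_s)-\nabla\widehat{W}*\widehat{\mu}_s(\widehat{X}_s)|\leq L_{\widehat{W}}\int|y-z|\, \dd\gamma(y,z)$, and optimizing over $\gamma$ produces $L_{\widehat{W}}\, d_1(\mu_s,\widehat{\mu}_s)\leq L_{\widehat{W}}\, d_2(\mu_s,\widehat{\mu}_s)\leq L_{\widehat{W}}\sqrt{\E|X_s-\widehat{X}_s|^2}$. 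A structurally identical splitting is performed for the diffusion term, using the Lipschitz continuity of $\widehat{\sigma}$ and $\widehat{K}$: the piece $(\sigma-\widehat{\sigma})(K*\mu_s(X_s))$ is bounded uniformly by $\|\sigma-\widehat{\sigma}\|_\infty$, while the kernel-mismatch piece $\widehat{\sigma}(K*\mu_s(X_s))-\widehat{\sigma}(\widehat{K}*\mu_s(X_s))$ yields exactly $L_{\widehat{\sigma}}^2\|(K-\widehat{K})*\mu_s\|^2_{L^2(\mu_s)}$ upon taking expectation, once more because $X_s\sim \mu_s$.

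Collecting all estimates, applying Cauchy--Schwarz and Young's inequality to absorb cross terms of the form $\langle X_s-\widehat{X}_s,\cdot\rangle$ into $\E|X_s-\widehat{X}_s|^2$, and using $d_2^2(\mu_s,\widehat{\mu}_s)\leq \E|X_s-\widehat{X}_s|^2$ repeatedly, I would arrive at an inequality of the shape
\[
\E|X_t-\widehat{X}_t|^2 \leq C_1\!\left(\E|X_0-\widehat{X}_0|^2 + \mathcal{\tilde{E}}_\infty(\widehat{W}) + \|\sigma-\widehat{\sigma}\|^2_\infty + \int_0^t \|(K-\widehat{K})*\mu_s\|^2_{L^2(\mu_s)}\, \dd s\right) + C_2\int_0^t \E|X_s-\widehat{X}_s|^2\, \dd s\ ,
\]
from which Gr\"onwall's inequality yields the claim. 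The main obstacle I anticipate is controlling the matrix-valued diffusion term: after the three-term split, $\|\Sigma_s-\widehat{\Sigma}_s\|^2$ unfolds into several squared Frobenius differences carrying simultaneously $|X_s-\widehat{X}_s|$, Wasserstein-type contributions $d_2(\mu_s,\widehat{\mu}_s)$, and kernel differences, and isolating each summand so that only the targeted right-hand-side terms appear will rely on the boundedness and Lipschitz hypotheses of Assumption \ref{assumptions:aggregation-diffusion} to dominate the resulting cross products via Young's inequality. A more minor technical subtlety is the rigorous justification of the synchronous coupling and of It\^o's formula at the McKean--Vlasov level, which should follow from standard well-posedness theory under the same assumptions.
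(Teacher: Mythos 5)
Your synchronous-coupling argument is sound and the core structure matches the paper: use the McKean--Vlasov SDE representation, split the drift difference into three pieces by inserting the two intermediate quantities so that the error functional, a Lipschitz-in-space term, and a Wasserstein term emerge, do the analogous split for the diffusion coefficient, and close via Gr\"onwall. The one genuine technical divergence is your use of It\^o's formula for $|X_t-\widehat{X}_t|^2$ followed by taking plain expectation, which kills the stochastic-integral term by martingality and removes any need for the Burkholder--Davis--Gundy inequality. The paper instead writes $X_t-\widehat{X}_t$ in integral form, bounds $\E\sup_{s\in[0,t]}|X_s-\widehat{X}_s|^2$ directly (so the drift integral is handled by Cauchy--Schwarz in time and the stochastic integral by BDG), and applies Gr\"onwall with the supremum inside. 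Your route is marginally more elementary and sufficient, since the proposition only needs the pointwise-in-time $d_2$ bound; the paper's route gives the stronger $\E\sup$ control, which is cheap to obtain and standard in well-posedness arguments for such SDEs but not strictly needed for this statement. Two small accounting points in your sketch: the factor produced by the first drift piece is $T\,\mathcal{\tilde{E}}_\infty(\widehat{W})$ only after also accounting for the Young-inequality split of $\langle X_s-\widehat{X}_s,\cdot\rangle$ (not a pure squared norm as in the paper's version), and the $\sigma$-mismatch piece should be bounded by the uniform norm $\|\sigma-\widehat{\sigma}\|_\infty$ \emph{before} squaring, as you state; both are easily repaired and do not affect the conclusion.
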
 
\begin{proof}
    We refer the reader to Appendix \ref{subsec:stability_estimate_agg_diff_eq} for a proof of this proposition. 
\end{proof}
\begin{remark}
   In a similar way to Proposition \ref{prop:Dobrushin_}, we note that our estimate in Proposition \ref{prop:agg_diff_estimate} allows the interaction potential, the diffusion coefficient and the kernels $K$ and $\widehat{K}$ to differ between the equations being compared. As before, this would be particularly relevant in a situation where not only the interaction potential, but also the diffusion coefficient as well as the kernel $K$ have to be inferred. 
    Note that the bound in the previous proposition depends on the uniform norm of the difference between $\sigma$ and $\widehat{\sigma}$ suggesting that deeper modifications of the error functional would be required to allow for inference of these functions in an $L^2$ framework. In the numerical section we consider $K= \widehat{K}$ and $\sigma = \widehat{\sigma}$.
\end{remark}

\subsection{Nonlinear diffusion}
\label{subsec:nonlinear_aggregation_diff_eq}

In this section, we obtain a similar type of stability estimate for an aggregation-diffusion equation with nonlinear diffusion coefficient. This now corresponds to the full equation \eqref{nonlocal}. Throughout this section, we will assume $\mu(t,\bx) = \rho(t,\bx) \dd \bx$. Thus, we will consider the following Cauchy-problem for a curve of probability densities $\rho \in C([0,T],\P^2(\R^d))$
\begin{align}\label{eq:non_linear_diffusion}
    \partial_t {\rho} + \nabla\cdot(\rho v (\rho)) &= 0\ ,
    \\
    \rho(0) & = \rho^0 \in \P^2(\R^d)\ ,\nonumber
\end{align}
where $v: [0,T]\times \R^d \to \R$ is the following velocity field 
\begin{equation}\label{eq:non_linear_velocity_field}
    v(\rho) := - \nabla (H'(\rho) +  W*\rho +   V) \ . 
\end{equation}

Here, $H:[0,+\infty] \to \R$ is the internal energy density given by $H(z) = \kappa\frac{z^m}{m-1}$ where $m\neq 1$, $m\geq 1- \frac{1}{d}$ and $m > \frac{d}{d+2}$, $W \in \W^{2, \infty}(\R^d)$ is an interaction potential and $V\in \W^{2,\infty}(\R^d)$ is a confinement potential.  Following \cite[Proposition 1]{otto2001geometry}, for some $\Omega \subset \R^d$ convex with $\partial \Omega$ smooth we will consider two smooth, non-negative solutions $\rho,\hatrho:[0,T]\times \Omega \to \R^d $, to the following problem
\begin{align}\label{eq:non_linear_smooth_continuity_eq}
\frac{\partial \rho}{\partial \tau}+\nabla \cdot\left(\rho v(\rho)\right) & =0 \  \text { in }[0,T] \times \Omega 
\\
\rho v(\rho) \cdot \nu & =0 \  \text { on } [0,T] \times \partial \Omega   \nonumber 
\end{align}
where $v: \R^d \to \R$ is the velocity field in \eqref{eq:non_linear_velocity_field} and $\hatrho$ satisfies an analogous problem for the velocity field $\widehat{v}(\hatrho)= -  \nabla (H'(\hatrho) + \hatw*\hatrho +  \widehat{V})$. As before, $W$ is the ground truth interaction potential and $\hatw$ the learned interaction potential.  We are now ready to present our stability estimate for the nonlinear diffusion case
\begin{proposition}\label{prop:non_linear_stability}
Let $\rho$ be a smooth solution to \eqref{eq:non_linear_smooth_continuity_eq} with velocity field \eqref{eq:non_linear_velocity_field}, and $\hatrho$ be another solution of the analogous equation driven by the velocity field $\widehat{v}$. Then, if the conditions of Lemma \ref{lemma:internal_energy_estimate} in Appendix \ref{subsec:proof_stability_non_linear_diff} are satisfied, we have the following stability estimate for any $t \in [0,T]$
\begin{align*}
    d^2_2(\rho_t,\hatrho_t) \leq &\exp\{2(1 +L^2_{\widehat{V}} +L^2_{\hatw}) t\}
    \\
   & \qquad \times \left(d^2_2(\hatrho_0,\rho_0)+ 2T \mathcal{\tilde{E}}_\infty(\hatw)+  2\int_0^t \norm{\nabla V - \nabla \widehat{V}}^2_{L^2(\rho_{s})} \dd s\right) \ .
\end{align*}
\end{proposition}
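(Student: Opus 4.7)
The plan is to obtain a Gr\"onwall-type inequality for $d_2^2(\rho_t,\hatrho_t)$ via optimal transport calculus. First, let $\pi_t \in \Pi(\rho_t,\hatrho_t)$ denote the optimal coupling at time $t$, so that $d_2^2(\rho_t,\hatrho_t) = \int_{\R^d \times \R^d}|x-y|^2\dd \pi_t(x,y)$. Since $\rho$ and $\hatrho$ are smooth solutions of continuity equations driven by $v(\rho)$ and $\widehat{v}(\hatrho)$ respectively, the standard differentiation formula along transported curves of measures yields
\begin{equation*}
\frac{d}{dt}\frac{1}{2}d_2^2(\rho_t,\hatrho_t) = \int_{\R^d \times \R^d}\langle x-y,\, v(\rho_t)(x) - \widehat{v}(\hatrho_t)(y)\rangle \dd \pi_t(x,y)\ .
\end{equation*}
Plugging in the explicit form \eqref{eq:non_linear_velocity_field}, I would decompose the right-hand side into three pieces corresponding to $-\nabla H'$, $-\nabla V$ and $-\nabla W*\rho$ respectively, and estimate each separately.

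The main obstacle is the internal-energy piece
\begin{equation*}
I_H := -\int \langle x-y,\, \nabla H'(\rho_t)(x) - \nabla H'(\hatrho_t)(y)\rangle \dd \pi_t(x,y)\ .
\end{equation*}
This is exactly where the nonlinear diffusion enters and where I would invoke Lemma \ref{lemma:internal_energy_estimate}. Since $H(z) = \kappa z^m/(m-1)$ with $m \geq 1 - 1/d$, the internal energy functional is displacement convex in the sense of McCann, which via the lemma gives $I_H \leq 0$; hence this contribution can be discarded from the right-hand side.

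For the confinement piece I would use the splitting $\nabla \widehat{V}(y) - \nabla V(x) = (\nabla \widehat{V}(x) - \nabla V(x)) + (\nabla \widehat{V}(y) - \nabla \widehat{V}(x))$: Young's inequality on the first bracket produces $\tfrac{1}{2}d_2^2(\rho_t,\hatrho_t) + \tfrac{1}{2}\norm{\nabla V - \nabla \widehat{V}}^2_{L^2(\rho_t)}$, while the second bracket, controlled via the $L_{\widehat{V}}$-Lipschitz continuity of $\nabla \widehat{V}$ followed by Young's inequality, yields a bound of the form $C(1 + L^2_{\widehat{V}})d_2^2(\rho_t,\hatrho_t)$. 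For the interaction piece, decompose analogously as $-\nabla W*\rho_t(x) + \nabla \hatw * \hatrho_t(y) = (\nabla \hatw - \nabla W)*\rho_t(x) + (\nabla \hatw * \hatrho_t(y) - \nabla \hatw * \rho_t(x))$. Young's inequality on the first bracket produces $\tfrac{1}{2}d_2^2(\rho_t,\hatrho_t) + \tfrac{1}{2}\norm{(\nabla \hatw - \nabla W)*\rho_t}^2_{L^2(\rho_t)}$, where the last summand is precisely the integrand of $T\,\mathcal{\tilde{E}}_\infty(\hatw)$. The second bracket is estimated using the coupling $\pi_t$: writing both convolutions as integrals against $\pi_t$ and applying the $L_{\hatw}$-Lipschitz continuity of $\nabla \hatw$ together with Young's inequality contributes a term of order $(1 + L^2_{\hatw})d_2^2(\rho_t,\hatrho_t)$.

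Collecting everything we arrive at a differential inequality of the schematic form
\begin{equation*}
\frac{d}{dt}d_2^2(\rho_t,\hatrho_t) \leq 2(1 + L^2_{\widehat{V}} + L^2_{\hatw})\,d_2^2(\rho_t,\hatrho_t) + 2\norm{\nabla V - \nabla \widehat{V}}^2_{L^2(\rho_t)} + 2\norm{(\nabla \hatw - \nabla W)*\rho_t}^2_{L^2(\rho_t)}\ .
\end{equation*}
Applying Gr\"onwall's inequality and recognizing that the time-integral of the last summand over $[0,T]$ equals $T\,\mathcal{\tilde{E}}_\infty(\hatw)$ yields exactly the claimed estimate.
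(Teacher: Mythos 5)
Your proposal is correct and follows essentially the same strategy as the paper: split the velocity difference into internal-energy, confinement, and interaction parts; kill the internal-energy piece via displacement convexity through Lemma \ref{lemma:internal_energy_estimate}; estimate the other two via Lipschitz bounds plus Young's inequality; and close with Gr\"onwall. The only real difference is technical: you invoke the a.e.\ differentiation formula for $t \mapsto \tfrac12 d_2^2(\rho_t,\hatrho_t)$ along two absolutely continuous curves (the Ambrosio--Gigli--Savar\'e identity, which holds with the optimal plan $\pi_t$ for a.e.\ $t$), whereas the paper derives the corresponding upper-derivative inequality from scratch using the flow-map representations $\rho_\tau = \Phi_\tau\#\rho_0$, $\hatrho_\tau = \hatphi_\tau\#\hatrho_0$, works with an arbitrary admissible plan $\gamma_s$ throughout, and only takes the infimum over $\Pi(\rho_s,\hatrho_s)$ at the very end before Gr\"onwall. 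Your route is a little more streamlined if one is willing to cite the derivative formula as a black box; the paper's is slightly more self-contained and sidesteps the need to handle the a.e.\ differentiability qualifier explicitly. Both are valid, and the constants come out the same after integrating in time and identifying $\int_0^t \|(\nabla\hatw-\nabla W)*\rho_s\|^2_{L^2(\rho_s)}\,\dd s \le T\,\mathcal{\tilde{E}}_\infty(\hatw)$.
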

\begin{proof}
    We refer the reader to Appendix \ref{subsec:proof_stability_non_linear_diff} for the proof of this proposition as well as the statement and proof of Lemma \ref{lemma:internal_energy_estimate}. 
\end{proof}
 \begin{remark}
Note that in Proposition \ref{prop:non_linear_stability} we assumed that the solutions $\rho, \hatrho$ are smooth. By well known properties of the porous medium equation, this implies that the solutions are bounded away from 0. However, this restriction can be removed by following the approximation arguments of the proof of Theorem 1 in \cite{otto2001geometry}. This yields weak solutions in $L^1(\Omega)$ by approximating with smooth solutions as the one considered in Proposition \ref{prop:non_linear_stability}. We note that here we consider a slightly more general energy functional than in \cite{otto2001geometry}, since our case includes an interaction term. However, the approximation argument goes through with minor modifications and we omit it it here for the sake of brevity. 
\end{remark}

{
\begin{remark}
Since the numerical section focuses exclusively on estimating the potential $W$, the result in Proposition \ref{prop:non_linear_stability} takes the following form:
\begin{align*}
    d^2_2(\rho_t,\hatrho_t) \leq &\exp\{2(1 +L^2_{\hatw}) t\}
 \left(d^2_2(\hatrho_0,\rho_0)+ 2 \mathcal{\tilde{E}}_\infty(\hatw)\right) \ .
\end{align*}
\end{remark}
}

\section{Numerical Schemes}\label{sec: algorithms} 
In practical scenarios, our access is limited to discrete-time data. Consequently, this section outlines the numerical discretization of the error functional \eqref{eq: estimator matrix form}
 and of the Basis Pursuit method introduced in Section \ref{sec 2}. We approximate all the integrals by a numerical quadrature rule and all the computations are carried out on a regular mesh. We present the fully discretized estimator for the 1D case. The generalisation to higher dimensions can be derived analogously. We consider a computational domain $[0,T]\times\Omega$, with $T>0$, $\Omega=[-R,R]$ for $R>0$ and $R$ chosen large enough such that the essential support of $\rho$ is contained in $[0,T]\times\Omega$. We will not estimate the cut-off error produced by this assumption as it is zero if the solution is compactly supported for all times with support in $\Omega$. This is the case for all PDEs \eqref{nonlocal} if the diffusion is degenerate at zero.

\subsection{Discrete error functional}\label{sec: numerical discretisation}

Let us take a space-time mesh size of $(\Delta x, \Delta t)$ and denote $t_\ell=\ell\Delta t$ where $\ell$ ranges from 0 to $\lceil{T/\Delta t}\rceil$, and $x_m=m\Delta x$ where $m$ spans from $-M$ to $M$, with $M$ defined as $\lceil 2R/\Delta x\rceil$.  For any function $v(t,x)$, denote $v_{m}^{\ell}\approx v(t_\ell,x_m)$, $v^{\ell}\approx v(t_\ell,x)$ and $v_m \approx v(t,x_m)$. The given discrete input data is $\{\rho_m^\ell:=\rho(t_\ell,x_m)\}_{m=-M,\ell=1}^{M,L\textcolor{black}{+1}}$.

 \noindent We define the standard \textcolor{black}{(forward (+))} finite difference operators $\delta_t^+$ and $\delta_x^+$ to approximate $\partial_tv$ and $\partial_x v$, respectively, 
\begin{equation}
  (\delta_t^+v)_m^{\ell}=  \frac{v_{m}^{\ell+1}-v_m^\ell}{\Delta t}\ ,\qquad   (\delta_x^+v)_m^\ell =\begin{cases} -v_m^\ell/\Delta x, \text { if } m =M\ ,\\[2mm] \frac{v_{m+1}^\ell-v_m^\ell}{\Delta x}, \text{ if } m<M\ .\end{cases}\label{finite difference op}
\end{equation}
For simplicity of the notation, we omit the parenthesis in the previous definitions and we write  $\delta_x^+ v_{m}^{\ell}$ and $\delta_t^+ v_{m}^{\ell}$. 

We will employ numerical quadratures utilizing discrete-time data to approximate the continuous integrals necessary for calculating $\mathbf{A}$ and $\mathbf{b}$, as detailed in \eqref{eq: A new} and \eqref{eq: b new}. This approach leads to a discretized version of $\mathbf{A}$ and $\mathbf{b}$, denoted by $\mathbf{A}_{n,M,L}$ and $\mathbf{b}_{n,M,L}$, respectively, satisfying the approximations:
$$\mathbf{A} \approx \mathbf{A}_{n,M,L}\ ,\qquad \mathbf{b} \approx \mathbf{b}_{n,M,L}\ .$$
Then we can write the discrete error functional, similar to \eqref{eq: estimator matrix form}, as
    \begin{equation}\label{eq: estimator matrix form2}
    \mathcal{E}_{\infty}^{n,M,L}(\mathbf{c})=\mathbf{c}^T\mathbf{A}_{n,M,L}\mathbf{c}-2\mathbf{b}_{n,M,L}^{T}\mathbf{c}\ .
    \end{equation}
In principle, choosing quadrature methods that correspond to the smoothness of the integrands is crucial for effective computation. Following the approach in \cite{lang2020learning}, we use a straightforward first-order forward Euler scheme, which makes minimal assumptions about the smoothness of the integrands. We introduce a series of functionals  that are useful in defining our numerical scheme.  For $t\in [0,T],\  x \in \Omega $  and $\nabla{\Psi_i}(x)=\psi_i(|x|)\frac{x}{|x|}$ the $i$-th basis function for the potential and its derivative, respectively, with $\psi_i(r)=\Psi_i'(r)$ for $r\in \mathbb{R}^+$, we define:
\begin{align}
     R_{n,M,L}^{i}(t,x)&\coloneqq  \sum_{m=-M}^{M}\Psi_i(x-x_m)\rho(t,x_m)\Delta x \approx \int_{\mathbb{R}} \Psi_i(x-y)\rho(t,y)dy = \Psi_i*\rho\ , \label{eq:R discrete}
     \\
  C_{n,M,L}^{i}(t,x) &\coloneqq\sum_{m=-M}^{M} \nabla \Psi_i(x-x_m)\rho(t,x_m)\Delta x \approx \int_{\mathbb{R}}\nabla \Psi_i(x-y)\rho(t,y)dy =  \nabla\Psi_i*\rho\ , \label{eq:C discrete}
  \\
  \widehat {\partial_t}\rho(t,x)&\coloneqq \sum_{\ell=1}^{L}  {\delta_t^+\rho(t^\ell,x)}\ \mathbb{1}_{[t_{\ell},t_{\ell+1})}(t)\ , \label{eq: rho time derivative discrete}
  \\
    F_{M,L}(t,x)&\coloneqq  \rho \bigg(\sum_{m=-M}^{M}{\delta_x^{+}H'(\rho(t,x_m)) }\mathbb{1}_{[x_{m},x_{m+1})}(x)+\partial_x V\bigg)\approx \rho\ \partial_x(H'(\rho)+V)\ . \label{eq: F discrete}
\end{align}
In \eqref{eq: F discrete}, the notation $\partial_x V(x)$ indicates that we compute this term analytically since the confinement potential $V(x)$ is a known function in our setting. Then  we get a discretization of $\mathbf{A}$ and $\mathbf{b}$ in \eqref{eq: A new} and \eqref{eq: b new} such that for $i,j =1,\dots,n$ we write
\begin{align}
\mathbf{A}(i,j) \approx \mathbf{A}_{n,M,L}(i,j) :&=\frac{1}{T} \sum_{m=-M,\ell=1}^{M,L} (C_{n,M,L}^{i})_m^{\ell} (C_{n,M,L}^{j})_m^{\ell}\rho_m^{\ell}\Delta x \Delta t\ ,\label{eq: discretised A}
\\
\mathbf{b}(i)\approx \mathbf{b}_{n,M,L}(i):&=  {-\frac{1}{T}\sum_{m=1,\ell=1}^{M,L} \bigg( (\widehat {\partial_t}\rho R_{n,M,L}^{i})_m^{\ell}+ (C_{n,M,L}^{i}F_{M,L})_m^{\ell} \bigg)\Delta x \Delta t}\label{eq: b with integration by parts}\ .
\end{align}
Note that in our numerical examples,  we have the solution $\rho$ (approximately) compactly supported on $[-R,R]$ and therefore  the integral kernel $G$ defined in \eqref{eq: computation of G}  is (approximately) supported on {$[-2R,2R]\times [-2R,2R]$}.
In practice, we rewrite the previous approximations to compute $\mathbf{A}(i,j)$ using the formulas in \eqref{eq: A new} as follows. We  approximate $G$ on the extension of the solution mesh: for $y,z \in \mathbb{R}, \ i,j =1,\cdots,n$, 
\begin{align}
G(y,z) \approx G_{M,L}(y,z) &= \sum_{\ell=1}^{L} \sum_{m=-M}^{M} \rho(t_{\ell}, x_m-y)\rho(t_{\ell},x_m-z)\rho(t_{\ell}, x_m)\Delta x \Delta t\ .
\end{align}
Hence, we can rewrite Equation \eqref{eq: discretised A} as 
\begin{align}\label{rewriteG}
\mathbf{A}_{n,M,L}(i,j) &= \frac{1}{T}\sum_{\ell=1}^{L} \sum_{m, m'=-2M}^{2M} \nabla \Psi_i(x_m)\nabla \Psi_j(x_{m'}) G_{M,L}(x_m,x_{m'})(\Delta x)^2\ .
\end{align}
These empirical quantities give rise to a linear system 
$\mathbf{A}_{n,M,L} \mathbf{c} =\mathbf{b}_{n,M,L}+\mathbf{e}$ where
\begin{equation}\label{ee}
\mathbf{e} = (\mathbf{A}_{n,M,L}-\mathbf{A})\mathbf{c} + (\mathbf{b}-\mathbf{b}_{n,M,L})\ . 
\end{equation}
We then solve the following  basis pursuit problem 
\begin{align}\label{pbp}
    \text{minimize }_{\hat{\mathbf{c}}\in \mathbb{R}^n} \|\hat{\mathbf{c}}\|_1\ , \nonumber\\
    \text{such that } \mathbf{A}_{n,M,L}\hat{\mathbf{c}} = \mathbf{b}_{n,M,L}\ ,
\end{align} which is a perturbed version of \eqref{orignalbp}.  Finally, we write the discrete estimator as 
$\hat{\Psi}_{n,M,L}=\sum_{i=1}^n\hat{\mathbf{c}}_i\Psi_i$.

\subsection{Error bounds}
 It is expected that  $\mathbf{A}_{n,M,L}$ and $\mathbf{b}_{n,M,L}$ will converge to $\mathbf{A}$ and $\mathbf{b}$ as $\Delta x,\ \Delta t \rightarrow 0$ and the convergence rate depends on the regularity of the solutions and the basis functions.  Therefore we first introduce some preliminary assumptions on $\rho$ and the basis functions of $\mathcal{H}$.

\begin{assumption}\label{assumpsolution}
Assume that $\rho \in { \mathcal{W}}^{2,\infty}( [0,T]\times \Omega)$ and $\mathcal{J}=H'(\rho)+V\in\mathcal{W}^{2,\infty}([0,T]\times \Omega)$.
\end{assumption}

\begin{assumption}\label{assump}
Assume $\mathcal{H}=\mathrm{span} \{\Psi_i\}_{i=1}^n$ consists of radial functions, $\nabla{\Psi_i}(\bx)=\psi_i(|\bx|)\frac{\bx}{|\bx|}$, with $\psi_i \in \mathcal{W}^{2,\infty}(\bar{\Omega})$ for $\bar{\Omega}=[-2R,2R]$. 
\end{assumption}

The  convergence analysis  is addressed in \cite{lang2020learning} and our two assumptions above are based on \cite[Assumption 3.1]{lang2020learning} and \cite[Assumption 3.2]{lang2020learning}, respectively. However, two differences are present in our approach. In \cite{lang2020learning}, $\mathcal{J}$ is the linear diffusion term whose regularity is determined by $\rho$, whereas our work extends regularity assumptions to a more general form of $\mathcal{J}$ that includes nonlinear diffusion. Furthermore, \cite{lang2020learning}  assumes that the basis functions are compactly supported, motivated by the use of a local spline basis. However, in our context, neither the basis functions nor the external potential functions need to be compactly supported. Instead, our approach involves considering their restrictions within a bounded domain. This is evident from  \eqref{rewriteG} and the compact support property of $\rho$, where the numerical error analysis only needs to be applied to functions  defined over  $[-2R,2R]$ or $[-R,R]$.

{ The regularity of the flux $\mathcal{J}$ in Assumption \ref{assumpsolution} is reasonable for solutions with no diffusion, linear diffusion or nonlinear nondegenerate diffusions. Furthermore, we note that, in the case of nonlinear degenerate diffusion, this assumption is satisfied for solutions that are bounded away from 0 in the domain $\Omega$.}

\begin{proposition}\label{prop: error A and b} 
Under the Assumptions \ref{assumpsolution} and \ref{assump}, the discretization error of $\mathbf{A}_{n,M,L}$ and $\mathbf{b}_{n,M,L}$ in \eqref{eq: discretised A} and \eqref{eq: b with integration by parts} are bounded by
    \begin{align}
        &| \mathbf{A}(i,j)-\mathbf{A}_{n,M,L}(i,j)|\leq \alpha(\Delta t+\Delta x)\label{eq: bound in A}\ ,\\
        &|\mathbf{b}(i)-\mathbf{b}_{n,M,L}(i)|\leq  \beta  (\Delta t+ \Delta x)\ ,\label{eq: bound in b}
    \end{align}
where $\alpha$ is a constant depending on $R, \|\rho\|_{1,\infty}$ and the bounds of the basis functions $\|\psi_i\|_{1,\infty}$, $i=1,\dots, n$, and 
 $\beta$ depends on $R$, $\| {\rho}\|_{2,\infty}$, $\| H'(\rho)+V\|_{2,\infty}$, and the bounds of the basis functions $\|\psi_i\|_{2,\infty}$, $i=1,\dots, n$.
\end{proposition}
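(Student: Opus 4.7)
The plan is to decompose each error into a small number of contributions, each of which is a standard Riemann--sum or finite--difference error that can be controlled using Assumptions \ref{assumpsolution} and \ref{assump}. The compact support of $\rho$ in $[0,T]\times\Omega$ means there is no truncation contribution, and the convolutions $\Psi_i*\rho$ are effectively supported on the extended mesh $[-2R,2R]$.

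For the bound \eqref{eq: bound in A} on $\mathbf{A}(i,j)-\mathbf{A}_{n,M,L}(i,j)$, I would add and subtract the mixed quantity
\[
\frac{1}{T}\sum_{\ell,m} (\nabla\Psi_i*\rho)(t_\ell,x_m)\cdot (\nabla\Psi_j*\rho)(t_\ell,x_m)\,\rho(t_\ell,x_m)\,\Delta x\,\Delta t,
\]
so that the error splits as $E_1+E_2$. The first term $E_1$ is the outer Riemann--sum error for the function $(t,x)\mapsto (\nabla\Psi_i*\rho)(\nabla\Psi_j*\rho)\rho$. Under Assumptions \ref{assumpsolution}--\ref{assump}, this function is Lipschitz in $(t,x)$ on $[0,T]\times[-R,R]$ (its Lipschitz constant is controlled by $\|\rho\|_{1,\infty}$ and $\|\psi_i\|_{1,\infty}\|\psi_j\|_{1,\infty}$), and so the standard composite midpoint/left-endpoint error on a uniform mesh gives $E_1=O(\Delta t+\Delta x)$ with a factor of order $R$ from the volume. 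The second term $E_2$ comes from replacing each continuous convolution $\nabla\Psi_i*\rho$ by its Riemann sum $C^i_{n,M,L}$; by \eqref{eq:C discrete} and Assumption \ref{assump}, the integrand $y\mapsto \nabla\Psi_i(x-y)\rho(t,y)$ is in $W^{1,\infty}$ in $y$, so its quadrature error is $O(\Delta x)$ with constant controlled by $R$, $\|\rho\|_{1,\infty}$, and $\|\psi_i\|_{1,\infty}$. Combining using a simple product estimate (one factor is $O(\Delta x)$, the other is uniformly bounded) yields \eqref{eq: bound in A} with a constant $\alpha$ of the advertised form.

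For the bound \eqref{eq: bound in b} on $\mathbf{b}(i)-\mathbf{b}_{n,M,L}(i)$, I would split the error into four pieces:
\begin{enumerate}
\item[(i)] the outer Riemann--sum error on $[0,T]\times\Omega$;
\item[(ii)] the forward--difference error $\partial_t\rho-\delta_t^+\rho$;
\item[(iii)] the forward--difference error $\partial_x H'(\rho)-\delta_x^+ H'(\rho)$, together with the factor $\rho$ in $F$;
\item[(iv)] the convolution quadrature errors for $\Psi_i*\rho$ and $\nabla\Psi_i*\rho$.
\end{enumerate}
Under Assumption \ref{assumpsolution}, $\rho\in \mathcal{W}^{2,\infty}$ and $H'(\rho)+V\in \mathcal{W}^{2,\infty}$, which is precisely the regularity needed to make the forward--difference errors (ii) and (iii) bounded by $\tfrac{1}{2}\|\partial_t^2\rho\|_\infty\Delta t$ and $\tfrac{1}{2}\|\partial_x^2(H'(\rho)+V)\|_\infty\Delta x$ respectively by Taylor's theorem. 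Under Assumption \ref{assump}, the convolution errors in (iv) are $O(\Delta x)$ as in the $\mathbf{A}$ argument. The outer Riemann--sum piece (i) proceeds exactly as $E_1$ above, with Lipschitz constants of the integrand controlled by $\|\rho\|_{2,\infty}$, $\|H'(\rho)+V\|_{2,\infty}$, and $\|\psi_i\|_{2,\infty}$. Summing these four $O(\Delta x+\Delta t)$ contributions with the triangle inequality, absorbing all uniform factors into a single constant $\beta$ with the stated dependence, yields \eqref{eq: bound in b}.

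The main obstacle is not a deep idea but careful bookkeeping: keeping track of how the $W^{2,\infty}$ norms of $\rho$, $H'(\rho)+V$, and the basis functions propagate through each of the four error pieces for $\mathbf{b}$, and ensuring that the Riemann--sum errors on $[-R,R]$ and the convolution errors on $[-2R,2R]$ are combined with compatible constants. A minor technical point is that the forward difference $\delta_x^+ v_M^\ell=-v_M^\ell/\Delta x$ at the right boundary in \eqref{finite difference op} has to be handled separately, but this introduces only boundary contributions that are $O(\Delta x)$ since $\rho$ (and hence $F_{M,L}$) vanishes near $\pm R$ by the assumed compact support, so it is harmless.
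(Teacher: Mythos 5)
Your proof is correct and follows essentially the same route as the paper's, whose own argument defers the bound on $\mathbf{A}$ entirely to the cited reference \cite{lang2020learning} and only spells out the one new piece, namely the estimate $\|F-F_{M,L}\|_\infty\leq\|\rho\|_\infty\|H'(\rho)+V\|_{2,\infty}\Delta x$ for the nonlinear-diffusion flux; your item (iii) captures exactly this. Your decomposition into outer Riemann-sum, forward-difference, and convolution-quadrature contributions is the natural filling-in of the deferred details, so the approach coincides while your version is simply more self-contained.
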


\begin{proof}

The proof of \eqref{eq: bound in A} is identical to the one presented in \cite{lang2020learning}. For \eqref{eq: bound in b}, the only difference lies in estimating  $\|F-F_{M,L} \|_{\infty}$ where $F =\rho\partial_xJ$ and its quadrature is defined in \eqref{eq: F discrete}. Note that 
 \[
 \|F-F_{M,L} \|_\infty\leq \|\rho \|_\infty\Bigl|\partial_x (H'(\rho)+V)-\sum_{m=-M}^{M}(\delta_x^{+} (H'(\rho)+V))_m \mathbb{1}_{[x_{m},x_{m+1})}\Bigr|_{\infty}\leq\|\rho \|_\infty C\Delta x\,,
 \]
 with $C=\|H'(\rho)+V\|_{2,\infty}$.
 So the above estimate slightly generalizes \cite{lang2020learning} by considering a general form of diffusion that satisfies the same smoothness assumption as $\rho$. 
\end{proof}

\begin{remark} 
In Proposition \ref{prop: error A and b} we assume the solution data is exact and there is no forward error from the numerical solver. The error committed in the approximation of $\mathbf{A}_{n,M,L}$ is only due to the numerical integration. Since our quadrature rule is the middle point formula and there is no derivative involved in the expression of $\mathbf{A}$, \eqref{eq: bound in A} can be improved to spatial accuracy $(\Delta x)^2$.
Note that one can use centered finite difference to approximate the spatial derivative, and the result can be improved from $\Delta x$ to $(\Delta x)^2$ in \eqref{eq: bound in b}.  
If a higher order quadrature rule is used in time, we expect analogous improvements in the approximation with respect to time for \eqref{eq: bound in A}. However, we cannot expect the improvement on \eqref{eq: bound in b} as we need to perform numerical quadrature on 
$$\partial_t\rho \Psi_i*\rho \in W^{1,\infty}([0,T]\times \Omega)\ ,$$ 
in approximating $\mathbf{b}$ where $\mathcal{O}(\Delta t)$ is already optimal.   
\end{remark}

\paragraph{Implications for Optimal Estimation Accuracy:}

{ Consider the true support of the coefficient vector $\mathbf{c}$, denoted by $\mathcal{I} \subseteq \{1,\ldots,n\}$. If PartInv accurately identifies $\mathcal{I}$, we define our estimator as $\hat{\mathbf{c}}(\mathcal{I}) = \big(({\mathbf{A}}_{n,M,L})_{\mathcal{I}}\big)^+ {\mathbf{{b}}}_{n,M,L}$ and set entries in $\mathcal{I}^{c}$ as zero. Denoting the smallest eigenvalue of the matrix $\mathbf{A}_{\mathcal{I}}$ by $\sigma_{min}(\mathbf{A}_{\mathcal{I}})$, despite the potential ill-conditioning of $\mathbf{A}$, it is plausible to assume that 
\begin{align}\label{eq:assumption1}\|(\mathbf{A}_{\mathcal{I}})^+\|=\frac{1}{\sigma_{min}(\mathbf{A}_{\mathcal{I}})}\leq C
\end{align}
for some constant $C$, especially when $\mathcal{I}$ is a relatively small set. Assuming $\Delta x$ and $\Delta t$ are small enough, by  Weyl's inequality, it is possible to make 
\begin{align}\label{thm:singularvalue}
\sigma_{min} (({\mathbf{A}}_{n,M,L})_{\mathcal{I}}) \geq \frac{ \sigma_{min}(\mathbf{A}_{\mathcal{I}})}{2}.
\end{align}
Combining all bounds in previous section, we obtain the following error estimate showing the convergence order  of our estimator on $\Delta x$ and $\Delta t$:}
{
\begin{theorem} Suppose \eqref{eq:assumption1} and \(\Delta x\) and \(\Delta t\) are sufficiently small such that \eqref{thm:singularvalue} is also satisfied. Then, the estimation error satisfies the bound:
\[
\|\mathbf{\hat c} - \mathbf{c}\| \lesssim \frac{\sqrt{n|\mathcal{I}|}}{ \sigma_{min}^2(\mathbf{A}_{\mathcal{I}})}(\Delta x + \Delta t),
\]
where \(\lesssim\) indicates that there is a constant independent of \(\Delta x\) and \(\Delta t\).
\end{theorem}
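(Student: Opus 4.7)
The plan is to carry out a standard perturbation analysis that exploits the fact that PartInv is assumed to recover the true support $\mathcal{I}$ exactly, so both $\mathbf{c}$ and $\hat{\mathbf{c}}$ are supported on $\mathcal{I}$ and the comparison reduces to the restricted (column) submatrices $\mathbf{A}_{\mathcal{I}}$ and $(\mathbf{A}_{n,M,L})_{\mathcal{I}}$. First, since $\mathbf{A}\mathbf{c} = \mathbf{b}$ and $\mathbf{c}$ vanishes outside $\mathcal{I}$, we have $\mathbf{A}_{\mathcal{I}} \mathbf{c}(\mathcal{I}) = \mathbf{b}$. Under the hypothesis \eqref{eq:assumption1}, $\mathbf{A}_{\mathcal{I}}$ has full column rank, hence $\mathbf{c}(\mathcal{I}) = \mathbf{A}_{\mathcal{I}}^{+}\mathbf{b}$. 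By construction $\hat{\mathbf{c}}(\mathcal{I}) = (\mathbf{A}_{n,M,L})_{\mathcal{I}}^{+}\mathbf{b}_{n,M,L}$, and $\|\hat{\mathbf{c}}-\mathbf{c}\| = \|\hat{\mathbf{c}}(\mathcal{I})-\mathbf{c}(\mathcal{I})\|$.

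The next step is an algebraic decomposition that avoids having to perturb the pseudoinverse as such. Using $\mathbf{b} = \mathbf{A}_{\mathcal{I}} \mathbf{c}(\mathcal{I})$, I would write
\begin{align*}
\hat{\mathbf{c}}(\mathcal{I}) - \mathbf{c}(\mathcal{I})
&= (\mathbf{A}_{n,M,L})_{\mathcal{I}}^{+}\bigl(\mathbf{b}_{n,M,L}-\mathbf{b}\bigr) + \bigl[(\mathbf{A}_{n,M,L})_{\mathcal{I}}^{+}\mathbf{b} - \mathbf{c}(\mathcal{I})\bigr] \\
&= (\mathbf{A}_{n,M,L})_{\mathcal{I}}^{+}\bigl(\mathbf{b}_{n,M,L}-\mathbf{b}\bigr) + (\mathbf{A}_{n,M,L})_{\mathcal{I}}^{+}\bigl[\mathbf{A}_{\mathcal{I}} - (\mathbf{A}_{n,M,L})_{\mathcal{I}}\bigr]\mathbf{c}(\mathcal{I}),
\end{align*}
so that only one pseudoinverse (the perturbed one) appears, and its operator norm is controlled via \eqref{thm:singularvalue} by $\|(\mathbf{A}_{n,M,L})_{\mathcal{I}}^{+}\| \leq 2/\sigma_{\min}(\mathbf{A}_{\mathcal{I}})$.

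Then I would plug in the discretization bounds from Proposition \ref{prop: error A and b} converted from entrywise to norm form: $\|\mathbf{b}-\mathbf{b}_{n,M,L}\| \leq \sqrt{n}\,\beta(\Delta t+\Delta x)$ and $\|\mathbf{A}_{\mathcal{I}}-(\mathbf{A}_{n,M,L})_{\mathcal{I}}\| \leq \|\mathbf{A}_{\mathcal{I}}-(\mathbf{A}_{n,M,L})_{\mathcal{I}}\|_F \leq \sqrt{n|\mathcal{I}|}\,\alpha(\Delta t+\Delta x)$, since $\mathbf{A}$ is $n\times n$ and $\mathbf{A}_{\mathcal{I}}$ keeps $|\mathcal{I}|$ columns. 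Combining via the triangle inequality yields
\[
\|\hat{\mathbf{c}}-\mathbf{c}\| \leq \frac{2}{\sigma_{\min}(\mathbf{A}_{\mathcal{I}})}\Bigl(\sqrt{n}\,\beta + \sqrt{n|\mathcal{I}|}\,\alpha\,\|\mathbf{c}(\mathcal{I})\|\Bigr)(\Delta t+\Delta x).
\]
The final observation that produces the advertised $\sigma_{\min}^{2}$ in the denominator is $\|\mathbf{c}(\mathcal{I})\| = \|\mathbf{A}_{\mathcal{I}}^{+}\mathbf{b}\| \leq \|\mathbf{b}\|/\sigma_{\min}(\mathbf{A}_{\mathcal{I}})$, which contributes the second factor of $1/\sigma_{\min}(\mathbf{A}_{\mathcal{I}})$ after absorbing $\|\mathbf{b}\|$, $\alpha$, $\beta$ into the implicit constant.

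There is no really delicate step: the only care needed is (i) to use the identity $\mathbf{b}=\mathbf{A}_{\mathcal{I}}\mathbf{c}(\mathcal{I})$ before invoking any pseudoinverse perturbation lemma (this sidesteps the usual Wedin-style bound, which is the main place where conventional arguments get messy and would force us to assume smallness of $\|\mathbf{A}-\mathbf{A}_{n,M,L}\|$ relative to $\sigma_{\min}(\mathbf{A}_{\mathcal{I}})$), and (ii) to justify upgrading the entrywise bounds of Proposition \ref{prop: error A and b} to spectral-norm bounds, which is routine since the Frobenius norm of an $n\times |\mathcal{I}|$ matrix with entries bounded by $\alpha(\Delta t+\Delta x)$ is at most $\sqrt{n|\mathcal{I}|}\,\alpha(\Delta t+\Delta x)$. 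The condition \eqref{thm:singularvalue} is exactly what guarantees that $(\mathbf{A}_{n,M,L})_{\mathcal{I}}$ remains of full column rank (so its pseudoinverse behaves like a left inverse) and supplies the factor $2$ that merges into the implicit constant.
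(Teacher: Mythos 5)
Your proof is correct, but it follows a genuinely different algebraic route than the paper's. The paper splits the error as $\hat{\mathbf{c}}(\mathcal{I}) - \mathbf{c}(\mathcal{I}) = \bigl((\mathbf{A}_{n,M,L})_{\mathcal{I}}^{+} - \mathbf{A}_{\mathcal{I}}^{+}\bigr)\mathbf{b}_{n,M,L} + \mathbf{A}_{\mathcal{I}}^{+}(\mathbf{b}_{n,M,L}-\mathbf{b})$ and then controls the difference of pseudoinverses by invoking Stewart's perturbation bound (Theorem~3.4 in \cite{stewart1977perturbation}), $\|(\mathbf{A}_{n,M,L})_{\mathcal{I}}^{+} - \mathbf{A}_{\mathcal{I}}^{+}\| \le 2\|(\mathbf{A}_{n,M,L})_{\mathcal{I}}^{+}\|\,\|\mathbf{A}_{\mathcal{I}}^{+}\|\,\|\mathbf{A}_{\mathcal{I}} - (\mathbf{A}_{n,M,L})_{\mathcal{I}}\|$. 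Your decomposition instead factors out a single (perturbed) pseudoinverse by first rewriting $\mathbf{c}(\mathcal{I}) = (\mathbf{A}_{n,M,L})_{\mathcal{I}}^{+}(\mathbf{A}_{n,M,L})_{\mathcal{I}}\mathbf{c}(\mathcal{I})$, which is valid precisely because \eqref{thm:singularvalue} guarantees $(\mathbf{A}_{n,M,L})_{\mathcal{I}}$ has full column rank so that its pseudoinverse acts as a left inverse. This is more elementary: you bypass the pseudoinverse perturbation lemma and rely only on the submultiplicativity of the operator norm and the identity $\mathbf{b} = \mathbf{A}_{\mathcal{I}}\mathbf{c}(\mathcal{I})$. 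One caveat: your remark that this avoids having to assume smallness of $\|\mathbf{A} - \mathbf{A}_{n,M,L}\|$ relative to $\sigma_{\min}(\mathbf{A}_{\mathcal{I}})$ is a little misleading — the hypothesis \eqref{thm:singularvalue} (via Weyl's inequality) \emph{is} precisely that smallness condition, and both proofs use it; what you genuinely avoid is the black-box pseudoinverse perturbation estimate, not the hypothesis. Your second factor of $1/\sigma_{\min}(\mathbf{A}_{\mathcal{I}})$ comes from bounding $\|\mathbf{c}(\mathcal{I})\| \le \|\mathbf{b}\|/\sigma_{\min}(\mathbf{A}_{\mathcal{I}})$, whereas the paper's comes from $\|\mathbf{A}_{\mathcal{I}}^{+}\|\|(\mathbf{A}_{n,M,L})_{\mathcal{I}}^{+}\|$; both deliver the same final order. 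The entrywise-to-Frobenius-norm upgrade of Proposition \ref{prop: error A and b} and the resulting $\sqrt{n|\mathcal{I}|}$ factor match the paper's.
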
}

\begin{proof}
{ 
We bound our estimation error as
\begin{align*}
\|\mathbf{\hat c}-\mathbf{c}\| &= \|[(\mathbf{A}_{n,M,L})_{\mathcal{I}}]^+ \mathbf{b}_{n,M,L}-\mathbf{A}_{\mathcal{I}}^+ \mathbf{b}\|\\&=\|([(\mathbf{A}_{n,M,L})_{\mathcal{I}}]^+ -\mathbf{A}_{\mathcal{I}}^+)\mathbf{b}_{n,M,L}+\mathbf{A}_{\mathcal{I}}^+(\mathbf{b}_{n,M,L}-\mathbf{b})\|
\\&\ \leq 2\| \mathbf{A}_{\mathcal{I}} -(\mathbf{{A}}_{n,M,L})_{\mathcal{I}}\|((\mathbf{{A}}_{n,M,L})_{\mathcal{I}})^+\|\mathbf{A}_{\mathcal{I}}^+\| \|\mathbf{b}_{n,M,L}\|+\|\mathbf{A}_{\mathcal{I}}^+\| \|\mathbf{b}_{n,M,L}-\mathbf{b}\|
\\&=\| \mathbf{A}_{\mathcal{I}} -(\mathbf{{A}}_{n,M,L})_{\mathcal{I}}\|\frac{2\|\mathbf{b}_{n,M,L}\|}{\sigma_{min}(\mathbf{A}_{\mathcal{I}}) \sigma_{min}((\mathbf{{A}}_{n,M,L})_{\mathcal{I}})}+  \|\mathbf{A}_{\mathcal{I}}^+ \|\mathbf{b}_{n,M,L}-\mathbf{b}\|
\\&\leq \| \mathbf{A}_{\mathcal{I}} -(\mathbf{{A}}_{n,M,L})_{\mathcal{I}}\|\frac{4\|\mathbf{b}_{n,M,L}-\mathbf{b}\|}{\sigma_{min}^2(\mathbf{A}_{\mathcal{I}})} + \| \mathbf{A}_{\mathcal{I}} -(\mathbf{{A}}_{n,M,L})_{\mathcal{I}}\|\frac{4\|\mathbf{b}\|}{\sigma_{min}^2(\mathbf{A}_{\mathcal{I}})} +\|\mathbf{A}_{\mathcal{I}}^+ \|\mathbf{b}_{n,M,L}-\mathbf{b}\|
\\& \lesssim \frac{\sqrt{n|\mathcal{I}|}(\Delta x +\Delta t)}{\sigma_{min}^2(\mathbf{A}_{\mathcal{I}})},
\end{align*}  where the third line of the inequality follows from  Theorem 3.4 in \cite{stewart1977perturbation} and $\|\mathbf{b}-\mathbf{b}_{n,M,L}| \lesssim \sqrt{n}(\Delta x +\Delta t)$; the symbol ``\(\lesssim\)" indicates that there is a constant independent of \(\Delta x\) and \(\Delta t\).}
\end{proof}

\subsection{Noisy data}

To test the robustness of the proposed method, we also consider the case where the solution data is corrupted by observational noise. In particular, we analyze the effects of adding i.i.d random noise with zero mean to the discretized samples of $\rho$. Hence, in this case, the final data set used for the estimation of the interaction kernel in the numerical examples is given by
\begin{equation}\label{eq: perturbed data}
\{\tilde{\rho}(t_\ell,x_m) \}_{m=-M,\ell=1}^{M,L}\ ,
\end{equation}
where $\tilde{\rho}(t_\ell,x_m)={\rho}(t_\ell,x_m)+\epsilon^{\ell}_m$. In our numerical examples, we used  $\epsilon^\ell_m \overset{\mathrm{iid}}{\sim} \mathcal{N}(0,\sigma^2)$. To ensure that the perturbation due to the noise is on a similar scale to the solution $\rho$ we will set
$$ \sigma = \frac{p}{100} \left( \sum_{\ell=1}^{L} \sum_{m=-M}^{M} (\rho_{m}^\ell)^2\Delta x \Delta t \right)^{\frac{1}{2}}\ , $$
for some constant $p \in [0,100]$. We refer to this as the noise being $p$-percent. In what follows we denote by $\norm{\cdot}_{L^2(\varepsilon)}$ the $L^2$ norm over the probability space $(\R^d,\mathcal{B}(\R^d), \mathbb{P})$, where $\mathcal{B}(\R^d)$ is the Borel $\sigma$-algebra and $\mathbb{P}$ is a probability measure. Let us define 
\begin{align}
 \mathbf{\widetilde{A}}_{n,M,L}(i,j) :&=\frac{1}{T} \sum_{m=-M,\ell=1}^{M,L} (\tilde{C}_{n,M,L}^{i})_m^{\ell} (\tilde{C}_{n,M,L}^{j})_m^{\ell}\tilde\rho_m^{\ell}\Delta x \Delta t\ ,\label{eq: discretised A with noise}
\\
\mathbf{\widetilde{b}}_{n,M,L}(i):&=  -{\frac{1}{T}\sum_{m=-M,l=1}^{M,L} \bigg( (\widehat {  \partial_t}\tilde\rho \tilde{R}_{n,M,L}^{i})_m^{\ell}+ (\tilde{C}_{n,M,L}^{i}\tilde{F}_{M,L})_m^{\ell} \bigg)\Delta x \Delta t}\label{eq: b with integration by parts with noise}\
\end{align}
where $\tilde{C}_{n,M,L}, \tilde{R}_{n,M,L}$ and $\tilde{F}_{n,M,L}$ are defined analogously to \eqref{eq:R discrete}-\eqref{eq: F discrete}, but depending on $\tilde{\rho}$. In this framework, we can obtain the following extension of the error bounds in Proposition \ref{prop: error A and b}. 

\begin{proposition}\label{prop: error A and b noise}
    The numerical error of $\mathbf{A}_{n,M,L}$ in \eqref{eq: discretised A} when we consider the perturbed solution $\tilde \rho$ as in \eqref{eq: perturbed data} is 
    \begin{align}
        &\| \mathbf{A}-\mathbf{\widetilde{A}}_{n,M,L} \|_{L^2(\epsilon)}\leq \alpha n (\Delta t+\Delta x) + nC(\sigma \sqrt{\Delta t \Delta x}+{  \sigma^2 \Delta x}) \label{eq: bound in A noise}\ ,
    \end{align}
where $\alpha$ is as in Proposition \ref{prop: error A and b} and $C>0$ is a constant depending on $R,T$ and $\norm{\psi_i}_\infty$, $i=1,\dots,n$. 
\end{proposition}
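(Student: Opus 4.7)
The plan is to split the error via triangle inequality into a deterministic discretization part and a purely stochastic part, then control the stochastic part by expanding the cubic dependence of $\mathbf{A}_{n,M,L}(i,j)$ on the density values and computing the variance of each resulting monomial in $\epsilon$. Concretely, I would write
\begin{equation*}
\| \mathbf{A}-\mathbf{\widetilde{A}}_{n,M,L} \|_{L^2(\epsilon)}
\leq \|\mathbf{A}-\mathbf{A}_{n,M,L}\| + \|\mathbf{A}_{n,M,L}-\mathbf{\widetilde{A}}_{n,M,L}\|_{L^2(\epsilon)}.
\end{equation*}
The first summand is deterministic and is handled by Proposition \ref{prop: error A and b}: each of the $n^2$ entries is bounded by $\alpha(\Delta t+\Delta x)$, so the Frobenius norm is bounded by $\alpha n (\Delta t+\Delta x)$, which provides the first term on the right-hand side of \eqref{eq: bound in A noise}.

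For the stochastic term, the key observation is that $\mathbf{A}_{n,M,L}(i,j)$ is \emph{cubic} in the samples $\rho_{m}^{\ell}$: it contains one explicit factor $\rho_m^{\ell}$ and two convolution factors $(C^{i}_{n,M,L})_m^\ell,(C^{j}_{n,M,L})_m^\ell$, each linear in $\rho$. Writing $\tilde{\rho}=\rho+\epsilon$ and $\delta C^{i}=\tilde{C}^{i}_{n,M,L}-C^{i}_{n,M,L}=\Delta x\sum_{m'}\nabla\Psi_i(x-x_{m'})\epsilon_{m'}^{\ell}$, I would expand
\begin{equation*}
\tilde{C}^{i}\tilde{C}^{j}\tilde{\rho}-C^{i}C^{j}\rho \ =\ \underbrace{C^{i}C^{j}\epsilon+\delta C^{i}C^{j}\rho+C^{i}\delta C^{j}\rho}_{\text{linear in }\epsilon}\ +\ \underbrace{\delta C^{i}\delta C^{j}\rho+\delta C^{i}C^{j}\epsilon+C^{i}\delta C^{j}\epsilon}_{\text{quadratic}}\ +\ \underbrace{\delta C^{i}\delta C^{j}\epsilon}_{\text{cubic}},
\end{equation*}
and estimate the $L^2(\epsilon)$ norm of each block after the quadrature sum $\tfrac{1}{T}\sum_{m,\ell}(\cdot)\Delta x\Delta t$. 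The linear-in-$\epsilon$ block yields sums of i.i.d.\ mean-zero terms with $O(1/(\Delta x\Delta t))$ summands, each $O(1)$, so its variance is of order $\sigma^{2}\Delta x\Delta t$ and its $L^2$ norm is $O(\sigma\sqrt{\Delta t\Delta x})$. The quadratic block, after computing $\mathbb{E}[(\delta C^{i}\delta C^{j})^{2}]$ via Isserlis-type pairing of the $\epsilon$'s, contributes $O(\sigma^{2}\Delta x)$; the cubic block is dominated by the same scale (an extra $\sigma\sqrt{\Delta x\Delta t}$ factor from an additional $\epsilon$) and is absorbed in the $\sigma^{2}\Delta x$ term. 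Summing these bounds over the $n^{2}$ entries and taking the square root produces the factor $n$ in the claimed $nC(\sigma\sqrt{\Delta t\Delta x}+\sigma^{2}\Delta x)$.

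The main obstacle I anticipate is bookkeeping the quadratic and cubic blocks cleanly: both involve nested sums in $(m,m',m'',\ell)$ and the fourth/sixth-moment expansions of products of independent Gaussians, and one must keep track of how the ``interior'' $\Delta x\sum_{m'}$ factors inside $\delta C^{i}$ interact with the ``outer'' quadrature $\Delta x\Delta t\sum_{m,\ell}$ so that no spurious $(\Delta x)^{-1}$ appears. The standard bookkeeping device here is to use Assumption \ref{assump} to pull out $\|\psi_{i}\|_{\infty}$ and $\|\rho\|_{\infty}$, then to reduce every fourth moment $\mathbb{E}[\epsilon_{m_{1}}\epsilon_{m_{2}}\epsilon_{m_{3}}\epsilon_{m_{4}}]$ to its non-vanishing pairings ($\sim\sigma^{4}$) and bound the resulting combinatorial sums by volumes of the form $R\cdot T$, absorbing all constants into $C=C(R,T,\{\|\psi_{i}\|_{\infty}\})$. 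With that in place, collecting the bounds yields exactly the stated inequality \eqref{eq: bound in A noise}.
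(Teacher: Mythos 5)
Your proposal is correct and takes essentially the same route as the paper: a triangle-inequality split into the deterministic discretization error (bounded entrywise by Proposition 4.4, giving the $\alpha n(\Delta t+\Delta x)$ Frobenius term) and a purely stochastic part, which the paper likewise handles by expanding the cubic dependence of $\mathbf{A}_{n,M,L}(i,j)$ on the samples, separating the seven noise-dependent monomials by their degree in $\epsilon$, and controlling each via second-moment computations that exploit the i.i.d.\ Gaussian structure; the dominant contributions it finds are $O(\sigma\sqrt{\Delta x\Delta t})$ from the linear block and $O(\sigma^2\Delta x)$ from the quadratic block, exactly as you predict.
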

\begin{proof} 
The error induced by random perturbations is additive, and as a consequence 
 \begin{align*}
        &\| \mathbf{A}-\mathbf{\widetilde{A}}_{n,M,L} \|_{L^2(\epsilon)}\leq \| \mathbf{A}-\mathbf{A}_{n,M,L} \|_{L^2(\epsilon)}+\| \mathbf{A}_{n,M,L} -\mathbf{\widetilde{A}}_{n,M,L} \|_{L^2(\epsilon)}.
    \end{align*}
The first part is estimated as in Proposition \ref{prop: error A and b} while the second term is discussed in Appendix \ref{app: error estimate}.    
\end{proof}

\begin{remark}\label{remark: error bound b}
     If a centered finite differences method is used and we have $H(\rho) = \frac{\rho^2}{2}$, then we can obtain the following error estimate for the numerical error of $\mathbf{b}_{n,M,L}$ with added noise,
    \begin{align}
        &\| \mathbf{b}-\mathbf{\widetilde{b}}_{n,M,L} \|_{L^2(\epsilon)}\leq \beta \sqrt{n}(\Delta t+\Delta x) + \sqrt{n}C\sigma^2 (\Delta x ^{-1}+ {  \Delta x \Delta t^{-1}})\label{eq: bound in b noise}\ ,
    \end{align}
    where we note that the inverse dependence on the mesh size is due to the discrete derivatives in $\mathbf{b}_{n,M,L}$. The estimate shows the errors introduced by the presence of noise in the discretisation of the matrix $b$ for a fixed space-time mesh size $\Delta x, \Delta t$. We remark that if an upwind scheme is used for the computation of derivatives or we have an arbitrary free energy kernel, $H$, the nonlinearities impede any explicit numerical error estimate. 
\end{remark}
\begin{proof}
    As for \eqref{eq: bound in A noise}, the error stemming from the random noise is additive so we have 
 \begin{align*}
        &\| \mathbf{b}-\mathbf{\widetilde{b}}_{n,M,L} \|_{L^2(\epsilon)}\leq \| \mathbf{b}-\mathbf{b}_{n,M,L} \|_{L^2(\epsilon)}+\| \mathbf{b}_{n,M,L} -\mathbf{\widetilde{b}}_{n,M,L} \|_{L^2(\epsilon)}\ .
    \end{align*}
Again, the first term is controlled as in Proposition \ref{prop: error A and b} and the second term is treated in Appendix \ref{app: error estimate}. 
\end{proof}

\subsection{Support pruning algorithm}\label{sec: support_pruning}

In this section, we discuss strategies for finding the right support of the coefficient vector when the data are not accurate. Once the true support is identified, we can perform restricted least squares on the support set and therefore improve the robustness of the algorithm. 

Given the discrete data, we apply Algorithm \ref{alg:euclid} on the BP problem \eqref{pbp}. When the discretization error terms in \eqref{eq: bound in A noise} and \eqref{eq: bound in b noise}, are small, it is effective to select the sparsity level of PartInv algorithm $K=s$, i.e., the exact sparsity of the true coefficient vector $\mathbf{c}$. However, in cases where this error becomes significant such as when the dimension of the dictionary (i.e. $n$) is large,  or errors coming from the discretization and noise, $(\Delta x,\Delta t, \sigma)$ increase, this choice often results in inaccurate support identification, adversely affecting the recovery of the interaction potential (see Figure \ref{fig:rec_NCW_supppruning} (a)). In such situations, it is advantageous to choose $K \geq s+1$ in our PartInv method as PartInv consistently produces a support set $\mathcal{I}^{(k)}$ that contains the true support $\mathcal{I}$ as a subset. However, when we perform restricted least squares regression---a method where the regression coefficients are estimated under certain linear constraints---on $\mathcal{I}^{(k)}$, we may still encounter large estimation errors. This is often due to the ill-conditioning of the regression matrix, which can adversely affect the accuracy of the estimates (see Figure \ref{fig:rec_NCW_supppruning} (b)). So it is necessary to prune $\mathcal{I}^{(k)}$ to identify the true support $\mathcal{I}$.

 We note that it is possible to skip the basis pursuit step, and perform restricted least squares on all possible combinations of indices from the beginning. However, the computational cost in this case is very high. The PartInv helps to reduce the number of combinatorial trials, and increases the computational efficiency of the estimation procedure. We propose the following algorithm, which combines residual error and time evolution error:  

\begin{enumerate}
    \item [Step 1:] For each subset ${J}$ from $\mathcal{I}^{(k)}$, the PartInv output, we compute the coefficient vector $\mathbf{c}_{{J}} \in \mathbb{R}^n$ using
    \[
    \mathbf{c}_{{J}}({J}) = (\mathbf{A}_{n,M,L})_{J}^{+} \mathbf{b}_{n,M,L}\ ,
    \] and $\mathbf{c}_{J}(J^c) =0$. 
    We then calculate its associated residual error (RE)
    \[
    \mathbf{c}_{J}^{\top} \mathbf{A}_{n,M,L}\mathbf{c}_{J} - 2\langle \mathbf{c}_{J},\mathbf{b}_{n,M,L}\rangle\ .
    \]

    \item [Step 2:] We sort the REs in descending order and identify a cluster of subsets whose residual errors are close to the smallest one falling within a predefined precision threshold $\tau$. This threshold $\tau$ is defined as a proportion of the norm of the error vector $|\mathbf{e}|$, previously defined in \eqref{ee}.

    \item [Step 3:] For each subset within the identified cluster, we use the interaction potential associated with $\mathbf{c}_{J}$ to incorporate it into \eqref{nonlocal}. Subsequently, we perform a forward solver on a much smaller space-time mesh size $( \widehat{\Delta x}, \widehat{\Delta t})$ than $(\Delta x,\Delta t)$
    and calculate the time evolution error (TEE) using the formula
    \[
    \mathrm{TEE}^2 = \sum_{m=-M',\ell=1}^{M',L'} |\widehat \rho_{m}^\ell-\tilde{\rho}_{m}^\ell|^2\Delta x\Delta t\ ,
    \] where we may use a subset of training data on a smaller time interval  $[0, \widehat{T}]$ for validation. 

\end{enumerate}

Step 2 draws its motivation from Proposition \ref{prop: informal_stability_estimate}, guiding the pursuit of estimators capable of accurately reproducing the training data. { But due to noise $\mathbf{e}$, we found that the smallest RE does not always
yield the best result and is highly problem-dependent. However, the trajectory evolution error (TEE) is theoretically guaranteed to work, provided the numerical solver is convergent and we choose sufficiently small $\widehat{\Delta x},\widehat{\Delta t}$. Considering calculating TEE can be computationally expensive, especially when the support candidate set is large, we look at clusters formed by RE values and then refine the true support from those with smaller RE values using trajectory evolution errors. This hybrid approach balances computational efficiency and accuracy.
}

Particularly when $\mathbf{e}$ is in a reasonable range, this strategy effectively narrows down candidate estimators for Step 3, providing computational efficiency given the potentially high computational cost of this subsequent step. In our numerical experiments, it is often easy for us to identify such a cluster of values that are close to a minimum. 

It is noteworthy that while the literature on sparse signal processing does present support pruning algorithms, our learning problem distinctively diverges due to the nonlinear relationship between the coefficient vector and the solution data. TEE, initially proposed in \cite{kang2021ident}, is employed for support pruning in the sparse identification of nonlinear PDEs, using a LASSO-based algorithm. At its core, the fundamental notion is that if the true PDE identifies the underlying dynamics, any further refinement in the discretization of the time domain should adhere to the given data. This adherence is ensured by the consistency, stability, and convergence of a numerical scheme.

To conclude, it is important to note that when $\mathbf{e}$ is large, estimators may yield approximately equivalent TEEs. In such instances, it is prudent to select the estimator yielding the sparser solution, aligning with the Akaike information criteria.

\section{Numerical examples}\label{sec: numerics}

In this section we systematically apply the algorithm outlined in Section~\ref{sec: algorithms} for the estimation of the interaction potential, 
to several instances of 
\begin{equation}
\partial_t\rho =\nabla \cdot [\rho \nabla(H'(\rho)+V(\bx)+W*\rho)]\ ,
\label{eq: main numerics}
\end{equation}
showcasing a wide range of dynamics. In particular, we consider examples with different initial data and potentials, as well as dynamics modulated by an external potential $V$, in  one and two dimensions. In the examples below, we either consider nonlinear diffusion, where $H(\rho)=\kappa\frac{\rho^m}{m-1}$, or linear diffusion, where $H(\rho)=\kappa\rho(\log \rho-1)$.

The evaluation of the algorithm's performance hinges on the computation of the relative reconstruction error defined as
\begin{equation}\label{errormetric}
E_{\textnormal{reconst}} = \frac{\|\mathbf{c}^{}-\widehat{\mathbf{c}}\|_2}{\|\mathbf{c}\|_2}.
\end{equation}

\subsection{Data generation}\label{eq: data generation}

To evaluate the  estimation approach, {the data is produced by solving \eqref{eq: main numerics}} employing a finite volume method on a grid of {high} resolution, using a space-time mesh size of $(\delta x, \delta t)$, and the solution is obtained over the time interval $[0,T]$. One could choose very fine $\delta x,$ and $\delta t$  so that we minimize the numerical error from the solver to a negligible level. More precisely, we use a semi-discrete (discrete in space only) second-order finite volume scheme as presented in \cite{carrillo2015finite}. This scheme uses a third-order strong preserving Runge-Kutta ODE solver \cite{gottlieb2001strong}. It preserves positivity of the average solution in each cell provided a CFL condition, $\delta t\leq \frac{\delta x}{2\max_m\big \{ {u^\ell}_{m+\frac{1}{2}}^{+},-{u^\ell}_{m-\frac{1}{2}}^-\big \} }$, is satisfied, where $u_{m+\frac{1}{2}}^+$ and $u_{m-\frac{1}{2}}^-$ are the right and left discrete velocity fields in each cell, respectively.
Since this finite volume scheme is obtained by integrating Equation \eqref{eq: main numerics} over each cell, it is easily generalized to higher dimensions where, in the 2D case,  the velocity field is computed over  squared cells. We consider no-flux boundary conditions in all cases.

Subsequently, this simulated data is constrained to a coarser grid characterized by a mesh of size $(\Delta x, \Delta t)$, where $\Delta x = C_x \delta x$ and $\Delta t = C_t \delta t$. Here $C_x$ and $C_t$ are referred to as the downsampling factors.  These factors represent the level of resolution present in the observational data.  

\begin{table}[h!]
\begin{center}
\begin{tabular}{l l} 
\hline
Notation &  Description\\ [0.5ex] 
 \hline\specialrule{0.1em}{0.1em}{0em}
 $(\delta x, \delta t)$ & Space-time step size used in finite volume solver \\ 
 \hline
 $\Delta x = C_x\delta x $ & Space size in observational data   \\ 
 \hline
 $\Delta t = C_t\delta t$ & Time step size in observational data \\ 
 \hline
$(\widehat \Delta x,\widehat \Delta t)$ & Space-time step size used in finite volume solver in support pruning step \\
  \hline
\end{tabular}
\caption{Notations of space-time step size.}
\end{center}
\end{table}

\paragraph{Overview of numerical experiments.} In the following section, we  test the effectiveness of algorithms over 1D and 2D numerical examples that display various collective behaviors. 

\begin{itemize}

\item For each example, we assess the effectiveness of PartInv across different data scenarios by using the error metric defined in \eqref{errormetric}. Firstly, we examine the case of noise-free data, we first generate our data by using an approximation of the PDE obtained in a very fine mesh. Then, the major source of errors arises from the evaluation error of the functional \eqref{eq: estimator matrix form2} introduced in the observational data by the downsampling procedure above. Secondly, we explore scenarios with noise contamination, keeping the space-time resolution constant. It is important to note that introducing Gaussian noise might lead to negative values in the solution data. This scenario is at odds with the reality that the actual solution data should be positive. However, in this study, we intentionally avoid using any denoising techniques. Our aim is to evaluate the resilience of PartInv even when the solution data deviates from physical constraints. Finally, we also test the robustness of the method when the solution data is obtained at coarse scale in a 2D example (See Example 5). 

\item We test the effects of different choices of the sparsity parameter $K$ in the PartInv algorithm on the reconstruction accuracy and show how the support pruning algorithm can help stabilize the results. We thereby provide a comprehensive check of robustness for PartInv. 

\item We show regularization is necessary in our estimation problem and sparsity-promoting is effective. Indeed, the least squares estimator yields inaccurate estimators while promoting sparsity can yield very accurate estimations. See  Figure \ref{fig:NCWleastsquare}. 

\item  We perform comparative tests between PartInv and standard solvers in the field of PDE sparse identification: LASSO-type estimators, Greedy methods such as Subspace pursuit\footnote{the algorithm only differs from CoSaMP in choosing sparsity.}, and Sequential Thresholded Least Squares (SINDy), all within the framework of basis pursuit, see Figure \ref{fig:rec_1dmeta_perturbation} and \ref{fig:rec_KF_perturbation} in Example 2 and 3. Additional examination is performed to contrast the proposed data-fidelity term with that invoked by the strong form of PDEs, commonly utilized in PDE literature; for a relevant example we refer to Figure \ref{fig:rec_KF_perturbation}.

\item {We note that although Example 1 and Example 4 do not meet the regularity conditions that guarantee the error estimates in Section \ref{sec: algorithms}, we observe overall good performance of our methods in the recovery of the interaction potential. }

\end{itemize}

\subsection{One dimensional examples}

Consider the one-dimensional aggregation-diffusion equation given by
\begin{align*}
    \partial_t\rho=(\rho(\kappa \rho^{{ m}-1}+W*\rho+V)_x)_x\ ,
\end{align*} 
where $W(x) =\Phi(|x|)$ and $\Phi'(|x|)=\phi(|x|)\mathrm{sign}(x)$. 

\paragraph{Example 1 (Nonlinear diffusion and compactly supported attraction potential)} We consider the nonlinear diffusion case where $m=2$, $\kappa=0.2$ and $V=0$. The initial condition is $\rho_0(x)=\chi_{[-2,2]}(x)$ and we have a compactly supported interaction potential given by
\begin{equation*}
   { W(x)=-5(1-|x|)_+}\ .
\end{equation*}
{ The solution data is produced with the parameters in Table \ref{t:CP_params}.}
\begin{table}[H]
\centering
\begin{tabular}{| c | c | c | c | c | c |c|}
\hline 
 $\delta t$ & $\delta x$ & Time domain  & Spatial domain & Initial condition & $\phi (|x|)$  \\ 
\hline 
 $0.5*10^{-4}$ & $10^{-2}$ &$[0,0.5]$ & $[-6,6]$ &  $\chi_{[-2,2]}(x)$ & $5\chi_{[0,1](|x|)}$\\
\hline
\end{tabular}
\caption{\textmd{{(CP) { Parameters to produce the solution data using a finite volume scheme.}} }}
\label{t:CP_params}
\end{table}

These dynamics have the capability to simulate formation of clustered solutions which, after some time, merge together as a result of the attraction potential and the very weak diffusion, see the profile of trajectory data used in our training in Figure \ref{fig:NCWtrajectorydata} (a).{ Note that the solution profile obtained is a transient state and we expect these two bumps to merge together at longer times, given the attraction range of the potential. 
Considering further away initial conditions, or weaker interaction potentials, leads to a steady state of disconnected support \cite{carrillo2015finite}.} Applications of this particular dynamics can be found for instance in interacting populations of cells. Cells from different colonies can start moving towards each other if they are at a certain sensing distance, forming bigger aggregates as a survival mechanism. In time, this will be observed in Figure \ref{fig:NCWtrajectorydata} (a). 

We consider the estimation of the interaction kernel  $\phi$ on the positive axis, and the results on the negative axis will follow automatically by employing the radial symmetry. We use a local piecewise linear ($p=0,1$) or constant basis $(p=0)$ of the form  $\{x^{p}\cdot\chi_{[\frac{6j}{n},\frac{6(j+1)}{n}]}(x):j=0,\cdots,n-1\}$.  We choose $n=12$. In the context of the piecewise linear basis (dimension $=24$), the true interaction kernel is 2-sparse with respect to this particular basis representation. Similarly, when using the piecewise constant basis (dimension $=12$), the true interaction kernel also exhibits a 2-sparse characteristic in relation to its basis representation.

In Figure \ref{fig:NCWleastsquare}, we show  the {efficacy} of sparsity-promoting in the proposed algorithm by comparing the least squares estimator using the psedoinverse depicted in (a) with our estimator in (b) {using piecewise constant basis for the noise-free data}. 
 We see the least squares estimator failed in this case, while our estimator obtained from the sparsity-promoting algorithm produced an accurate estimate since it identified a correct 2-sparse representation.
\begin{figure}[H]\label{NCWtraj}
\centering
(a)\begin{minipage}{.45\textwidth} \centering   
\includegraphics[width=0.72\textwidth,height=0.48\textwidth]{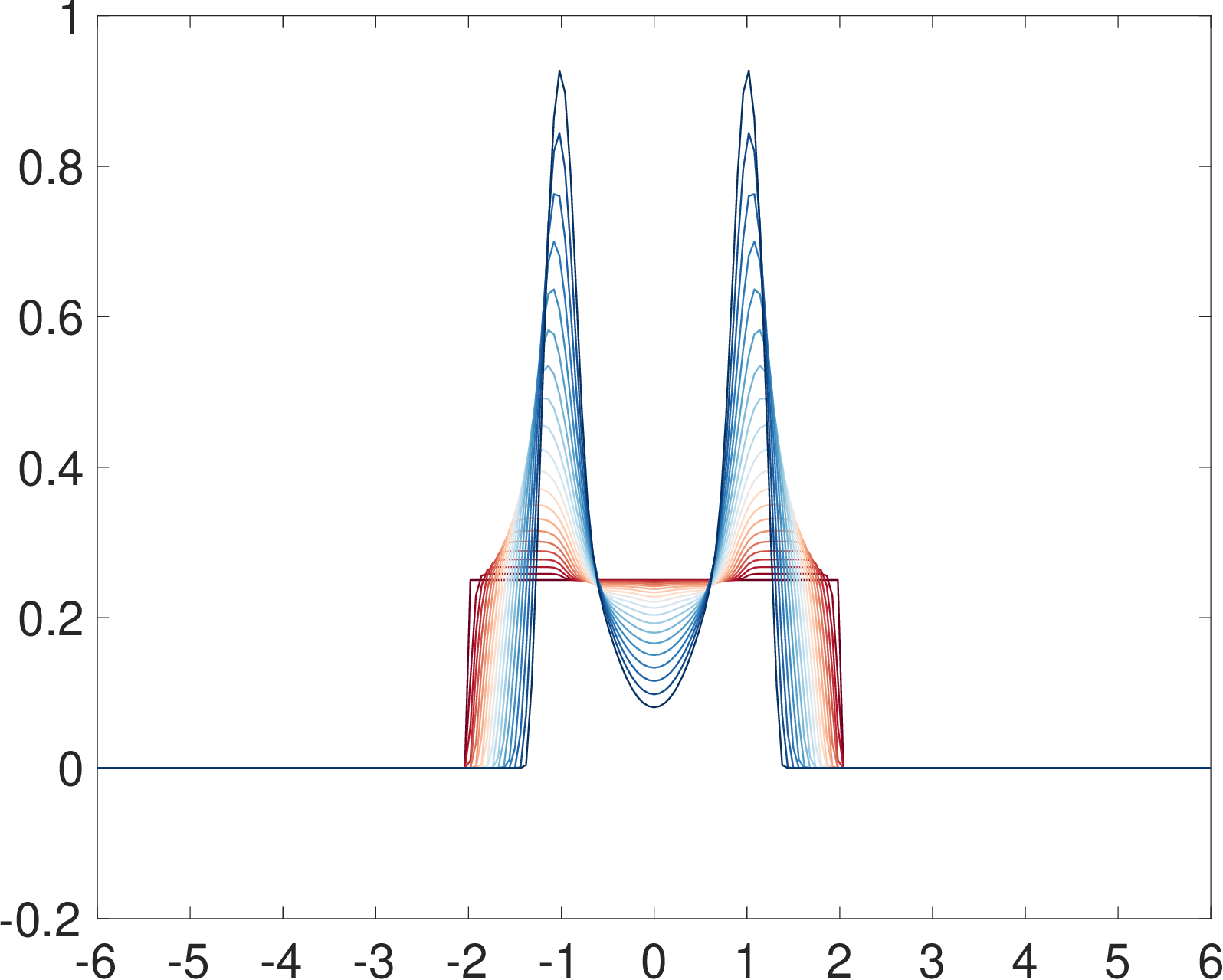}
\end{minipage}
(b)\begin{minipage}{.45\textwidth} \centering  
\includegraphics[width=0.72\textwidth,height=0.48\textwidth]{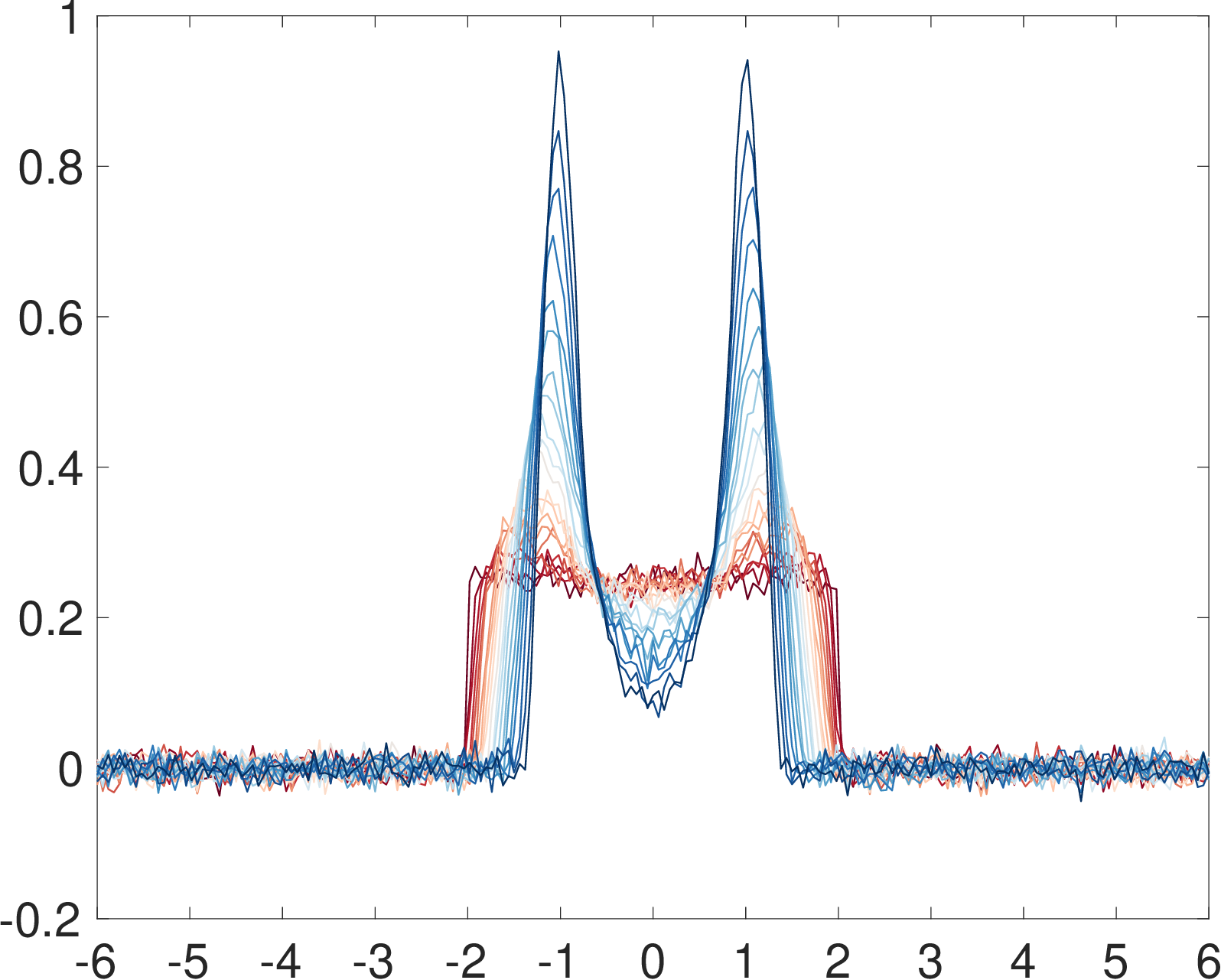}
\end{minipage}

\caption{Profile of a subset of trajectory data used in our training  where we choose $\Delta x = 6 \delta x$ and $\Delta t =50 \delta t$. A spectrum of colors transitioning from red to blue to symbolize the passage of time. (a) data generated from the numerical solver (b) data with 3\% noise added. }\label{fig:NCWtrajectorydata}
\end{figure}

In addition, we test the robustness of PartInv with respect to data perturbations coming from observation noise and discretization errors, and summarize the results in Figure \ref{fig:rec_NCW_perturbation}. 
 In this set of experiments, we observed in all challenging data regimes (large discretization error and/or large noise) PartInv {accurately} identified the correct support and the reconstruction error got amplified because of the corruption of the data. In addition, for a fixed space-time mesh size, we observe in Figure \ref{fig:rec_NCW_perturbation}(a) that the reconstruction error depends linearly on the noise variance. Furthermore, we display the relative reconstruction errors with different choices of space-time mesh size $(\Delta x, \Delta t)$ in Figure \ref{fig:rec_NCW_perturbation}(b). We see the error depends roughly linearly with respect to $\Delta x$, but did not vary much with respect to \textcolor{black}{$(\Delta t)$} in our selected range. This is possible as in our error analysis, the coefficient in front of  $(\Delta t)^2$ may be relatively small, and at the current scale it is dominated by the errors in the $\Delta x$ term.

\begin{figure}[H]
\centering

(a) \begin{minipage}{.45\textwidth} \centering  Least Squares  
\vspace{0.2cm}

\includegraphics[width=0.72\textwidth,height=0.48\textwidth]{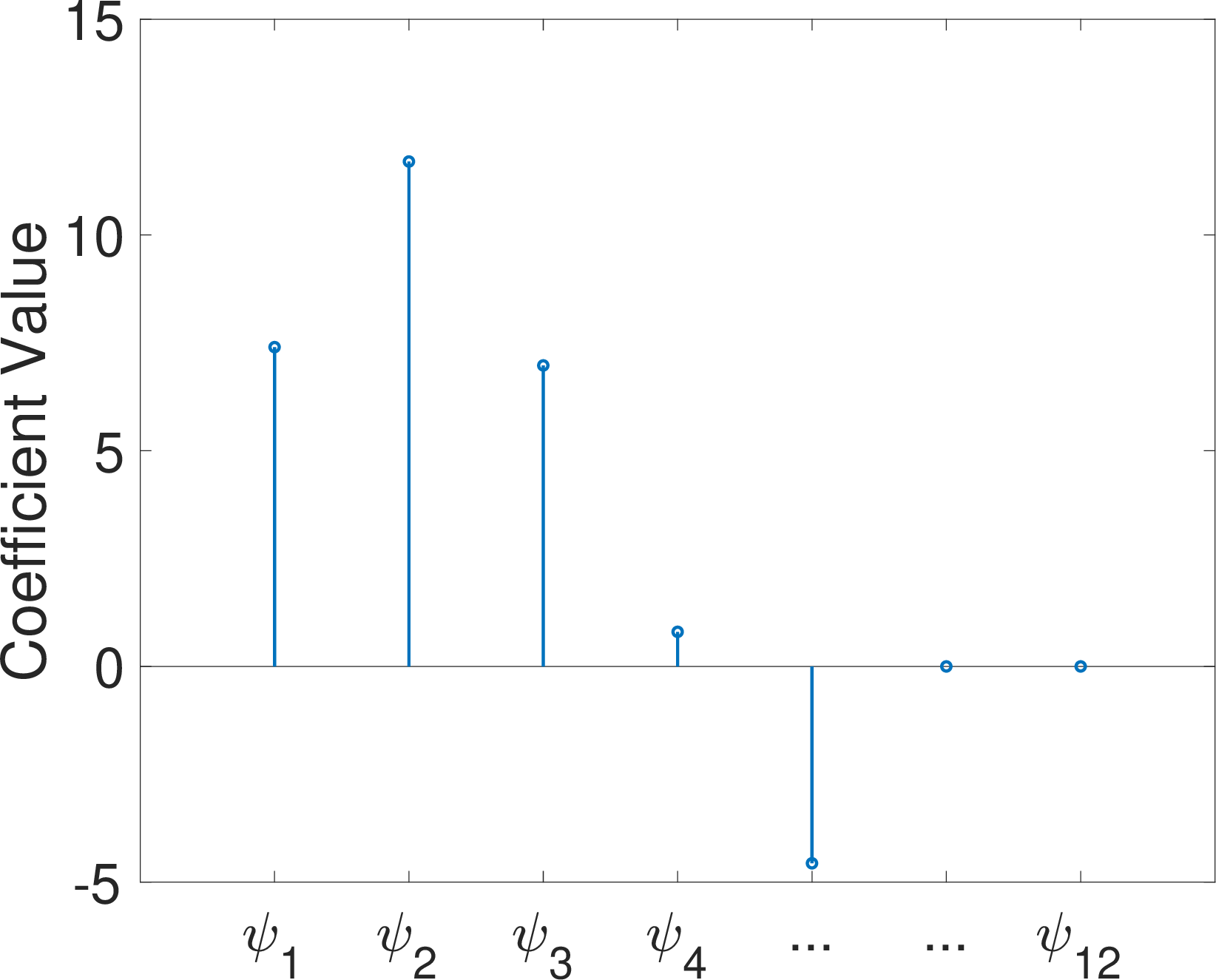}
\end{minipage}
(b)\begin{minipage}{.45\textwidth} \centering   PartInv with $K=2$
\vspace{0.3cm}

\includegraphics[width=0.72\textwidth,height=0.48\textwidth]{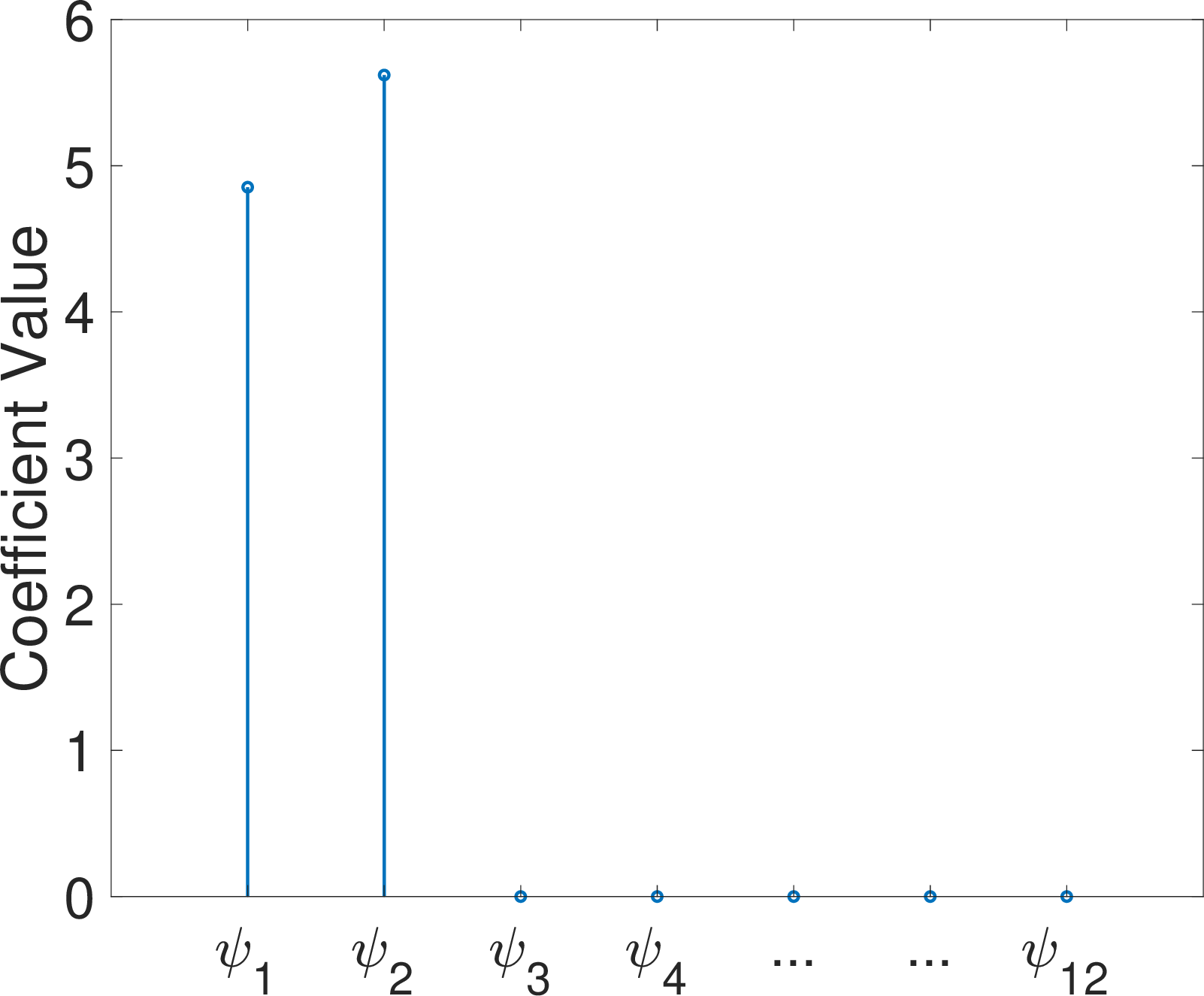}

\end{minipage}

\caption{Results with piecewise constant basis where we choose $\Delta x = 6 \delta x$ and $\Delta t =50 \delta t$.  From (b), { we clearly see that sparsity effectively regularizes the inverse problem and finds a solution that closely aligns with the ground truth coefficient vector} $[5,5]$ with respect to the basis $[\psi_1,\psi_2]$. 
  }\label{fig:NCWleastsquare}
\end{figure}

\begin{figure}[H]
\centering
(a)\begin{minipage}{.45\textwidth}\centering  
\includegraphics[width=0.72\textwidth,height=0.48\textwidth]{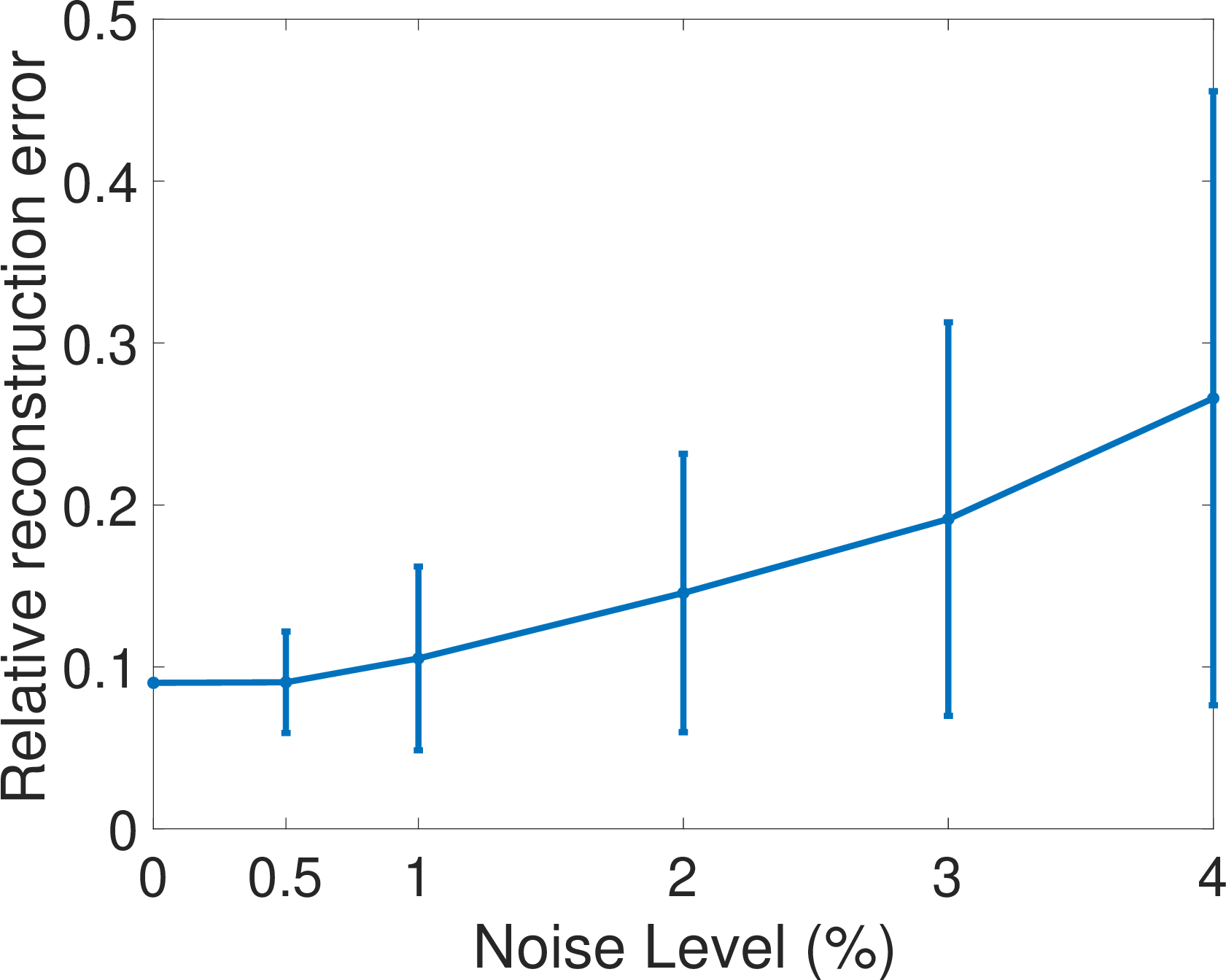}

\end{minipage}
(b)\begin{minipage}{.45\textwidth}  \centering 
\includegraphics[width=0.84\textwidth,height=0.6\textwidth]{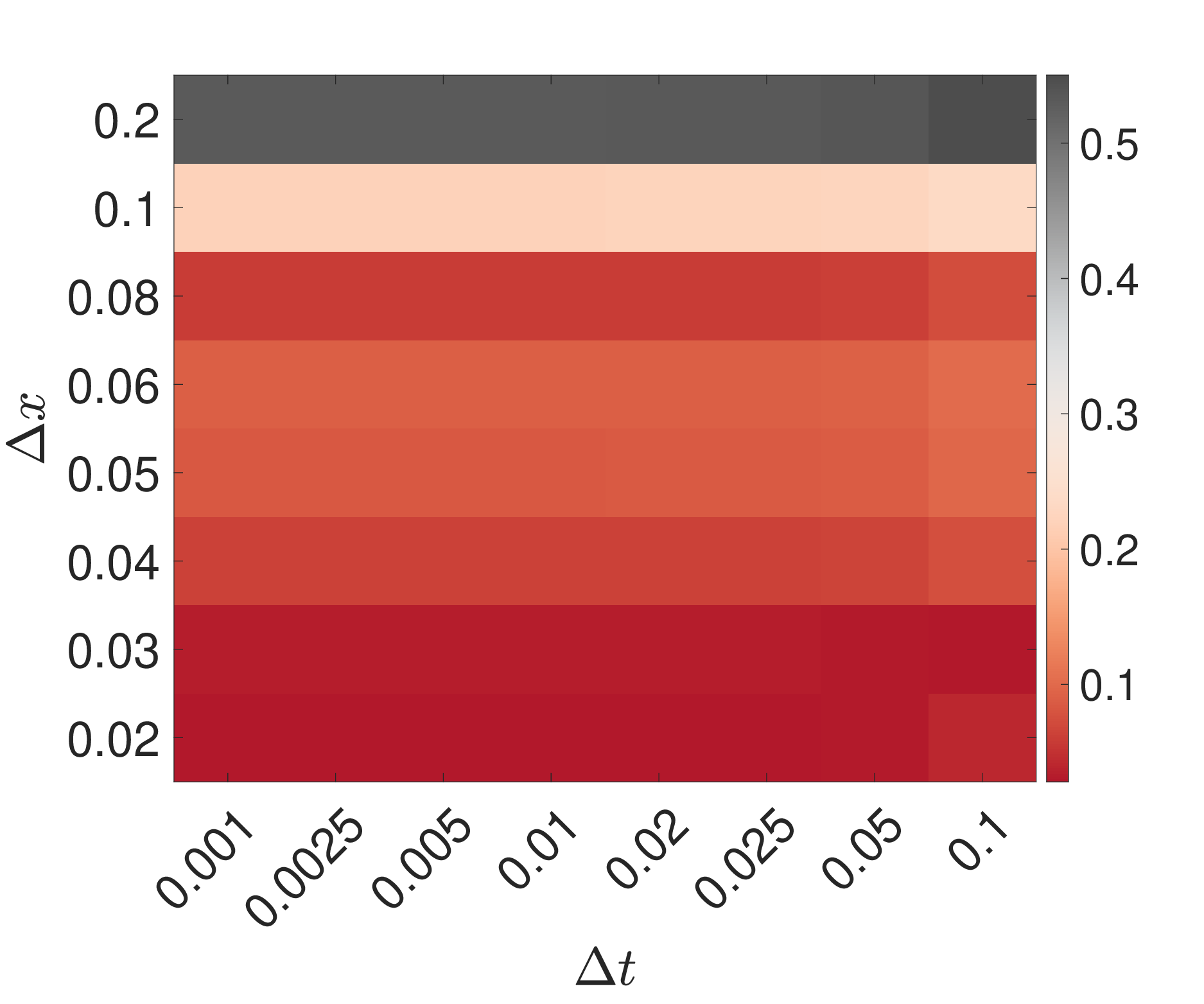}
\end{minipage}
\caption{Results of PartInv with {sparsity} $K=2$ using piecewise constant basis. (a) Accuracy for different levels of noise where we display the mean and standard deviation of relative errors over 100 trials. { (b) Relation between the relative reconstruction error, given by the different color intensities, and the mesh size  $(\Delta x,{\Delta t})$.}}\label{fig:rec_NCW_perturbation}
\end{figure}
The effectiveness of PartInv depends, in part, on the choice of the dictionary. To illustrate this, we investigate the performance of the algorithm over a piecewise linear basis. As evidenced in Figure \ref{fig:rec_NCW_supppruning} (a), using the piecewise linear basis leads to inaccurate recovery using the same noise-free training data and parameters as in Figure \ref{fig:NCWleastsquare}. The reason is that using a larger dictionary increases the probability of obtaining a matrix {$\mathbf{A}$ whose} columns present high coherence with those corresponding to the true support, making the identification of the true support particularly difficult in such instances. 

To circumvent this challenge, it is advantageous to choose a larger $K$ and implement our support pruning algorithm, introduced in Section \ref{sec: support_pruning}. We see from Figure \ref{fig:rec_NCW_supppruning} (b) that PartInv outputs a support set $\{1,2,3\}$. Then we apply the support pruning algorithm, as depicted in Figure \ref{fig:rec_NCW_supppruning} (c), where we identify the right support set $\{1,3\}$ \footnote{ We choose the natural ordering  in our piecewise linear basis $\{{x^0 \bm{1}_{[0, 1/2]}, x^1  \bm{1}_{[0, 1/2]}, x^0 \bm{1} _{[1/2, 1]}, x^1 \bm{1}_{[1/2, 1]}, ... }\}$ so the true interaction kernel is spanned by $\psi_1$ and $\psi_3$.} using the numerical values in Table \ref{tab:ncw_supppruning}, yielding accurate coefficient estimation. { Note that by our empirical evaluations, the ones with smallest REs do not necessarily yield accurate estimations, so we recommend using both RE and TEE. }
\begin{table}[H]
    \centering
        \resizebox{0.9\textwidth}{!}{%
 \begin{tabular}{|c|c|c|c|c|c|c|c|}
        \hline
        Active terms & Coefs & RE & TEE & Active Terms & Coefs & RE & TEE\\
        $\psi_1$ & 11.56 & -0.24 &  & $[\psi_1,\psi_2]$ & [-52.97,208.05] & 0.59& \\
        $\psi_2$  & 37.88 & -0.29 &  & $[\psi_1,\psi_3]$ & \textbf{[4.69,5.63]} & \textbf{-0.40}& \textbf{0.04}\\
       $\psi_3$  & 8.40 & -0.31 &  & $[\psi_2,\psi_3]$ & [16.49,5.28] & -0.39& 0.16\\
       $[\psi_1,\psi_2,\psi_3]$ & [9.55, -17.18, 6.01] & -0.38 & 0.40 &  & & & \\
        \hline
    \end{tabular}}\caption{Numerical results for the pruning algorithms where {we refine the finite volume solution using a mesh size $\widehat \Delta x = \frac{\delta x}{2}$ and $\widehat \Delta t =\frac{\delta t}{4}$.} }
    \label{tab:ncw_supppruning}
\end{table}

\begin{figure}[H]
\centering

\begin{minipage}{.3\linewidth} \centering \small \hspace{2mm} (a) PartInv with $K=2$ \end{minipage}
\begin{minipage}{.3\linewidth} \centering \small \hspace{2mm} (b) PartInv with $K=3$ \end{minipage}
\begin{minipage}{.3\linewidth} \centering \small \hspace{2mm} (c) Results with support pruning 
\vspace{0.3cm}
 \end{minipage}\\
\includegraphics[width=.3\linewidth,height=0.25\linewidth]{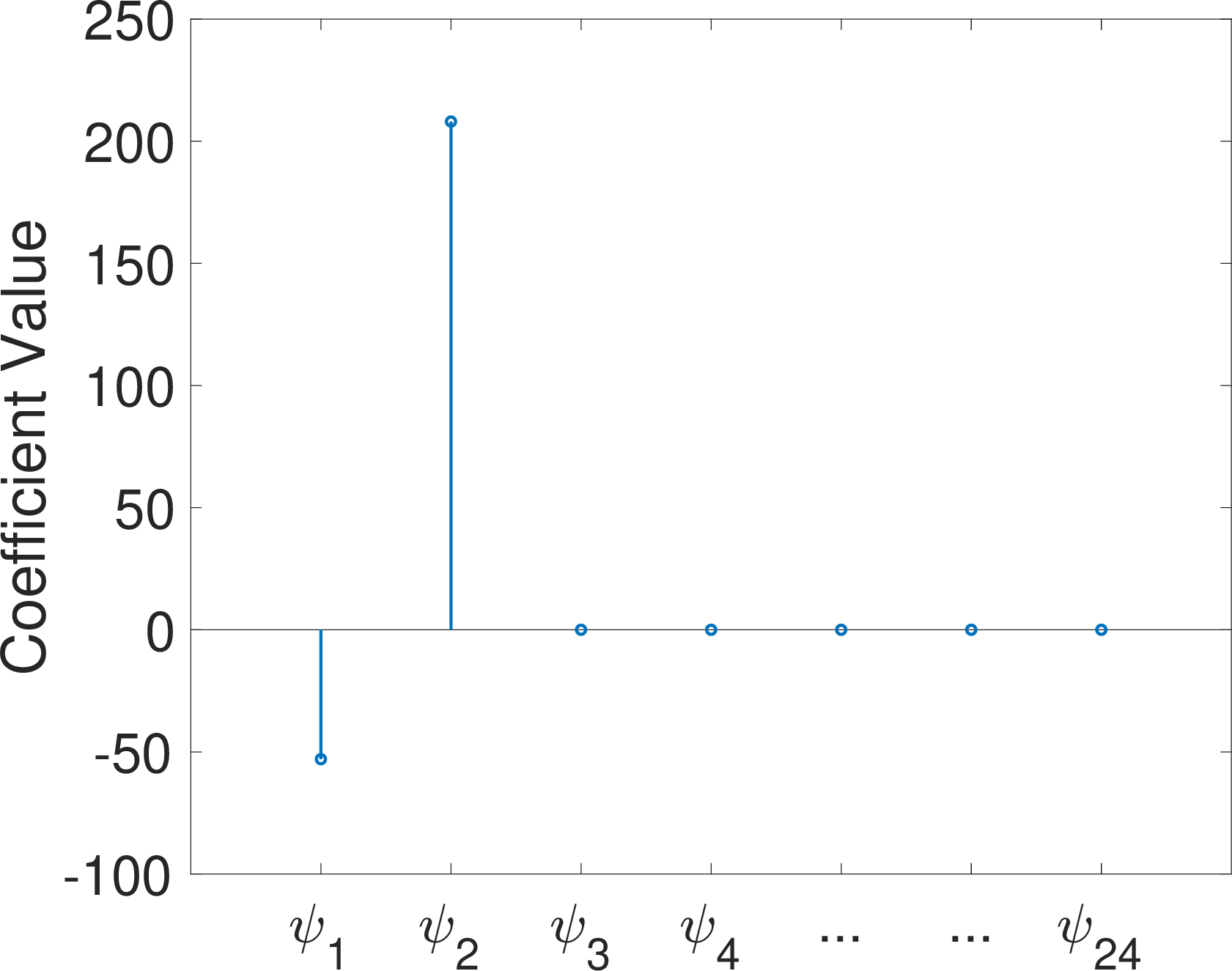}
\includegraphics[width=.3\linewidth,height=0.25\linewidth]{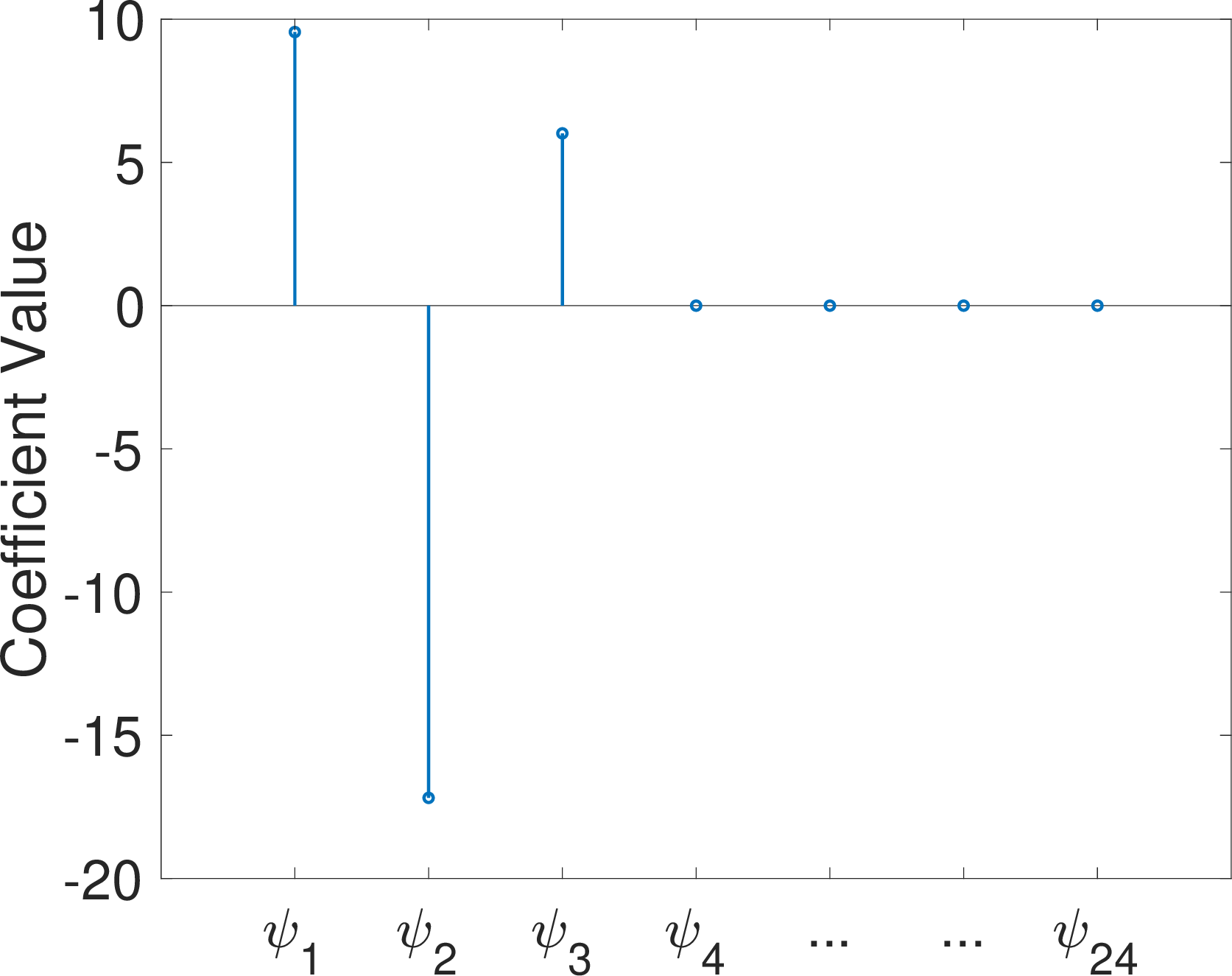}
\includegraphics[width=.3\linewidth,height=0.25\linewidth]{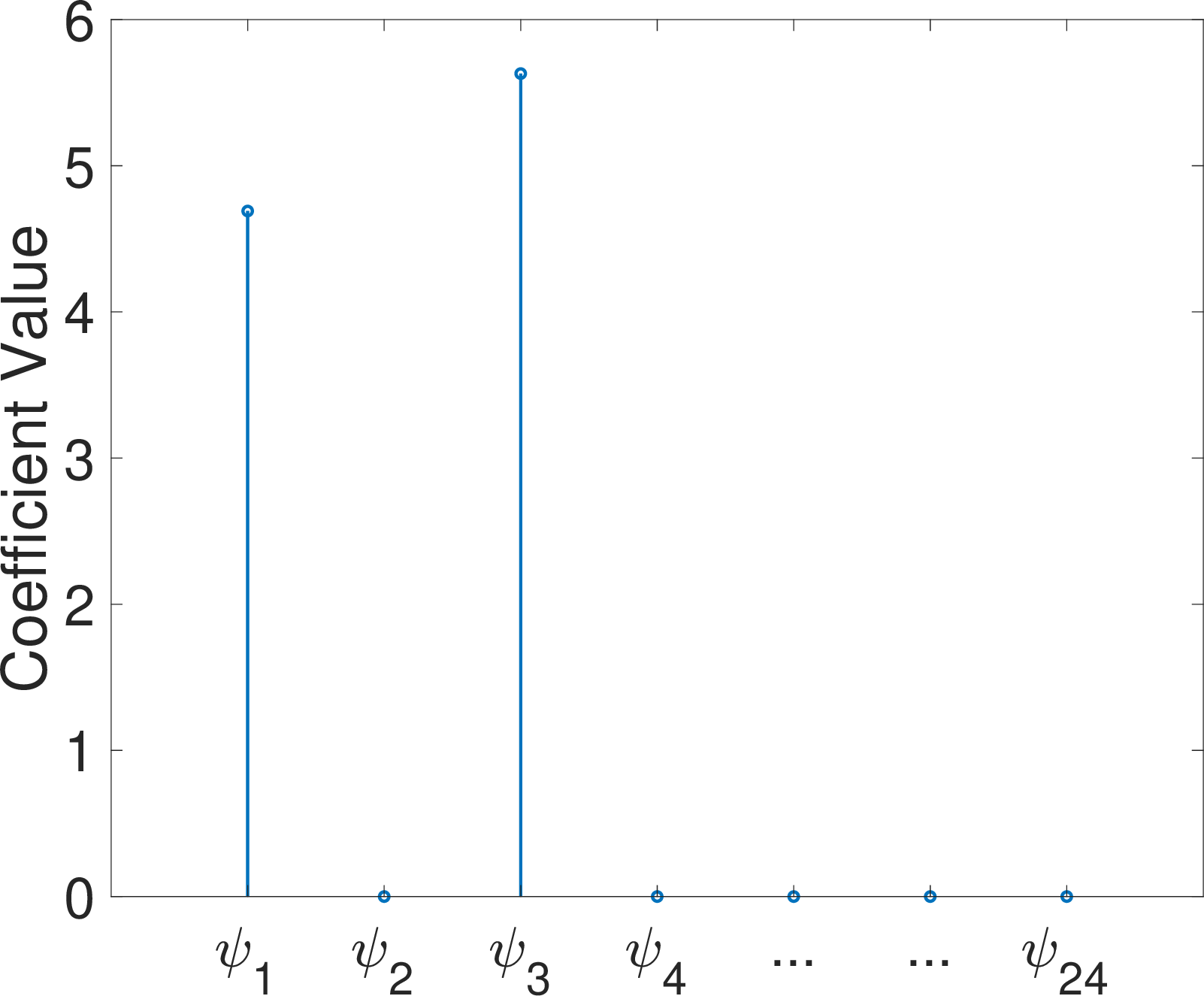}\\
\caption{Results for PartInv  with piecewise linear basis where we use the same training data as in Figure \ref{fig:NCWleastsquare}. (a)-(b) are the cases without support pruning. The case with support pruning with $K=3$ is presented in (c). We see it produced the most accurate estimation of the true coefficient $[5,5]$ with respect to the basis $(\psi_1,\psi_3)$.}\label{fig:rec_NCW_supppruning}
\end{figure}

\paragraph{Example 2 (Nonlinear diffusion and nonlocal attraction potential) }

 We consider the nonlinear diffusion case where this time $m=3$, $\kappa=0.48$ and $V=0$. We have a nonlocal attraction interaction potential given by
\begin{equation*}
    W(x)=-2\frac{\exp(-|x|^2)}{\sqrt{\pi}}-2\frac{\exp(-|x|^2/2)}{\sqrt{2\pi}} .
\end{equation*} 

This equation describes spontaneous biological aggregation of e.g. bacteria colonies \cite{topaz2006nonlocal}. An extensive study of the steady states for an analogous example was carried out in \cite{burger2014stationary} where it was observed that, when $m>2$ the attraction dominates the dynamics leading to compactly supported steady states as observed in Figure \ref{fig:1dmeta}. The dynamics in this case is governed by a competition between the nonlocal attraction, characterized by the term $W*\rho$, and the nonlinear diffusion with exponent $m$, which represents a local repulsion.
To generate the solution data we used as initial condition 
 $\frac{\mathcal{N}(1,0.5^2)+\mathcal{N}(-1,0.5^2)}{2}$. The solution data profile is plotted in Figure \ref{fig:1dmeta} (a) and its noisy version in (b).

\begin{table}[H]
\centering
\begin{tabular}{| c | c | c | c | c | c |}
\hline 
 $\delta t$ & $\delta x$ & Time domain  & Spatial domain  & $\phi (|x|)$  \\ 
\hline 
 $10^{-4}$ & $1.25\cdot 10^{-2}$ &$[0,1.5]$ & $[-6,6]$ &  $\frac{4}{\sqrt{\pi}}|x|\exp(-|x|^2)+\frac{2|x|}{\sqrt{2\pi}}\exp(-\frac{|x|^2}{2})$ \\
\hline
\end{tabular}
\caption{(1D Metastable) { Parameters to produce the solution data using a finite volume scheme.}}

\end{table}

\begin{figure}[H]
\centering (a)\begin{minipage}{.45\textwidth}   
\centering\includegraphics[width=0.72\textwidth,height=0.48\textwidth]{ 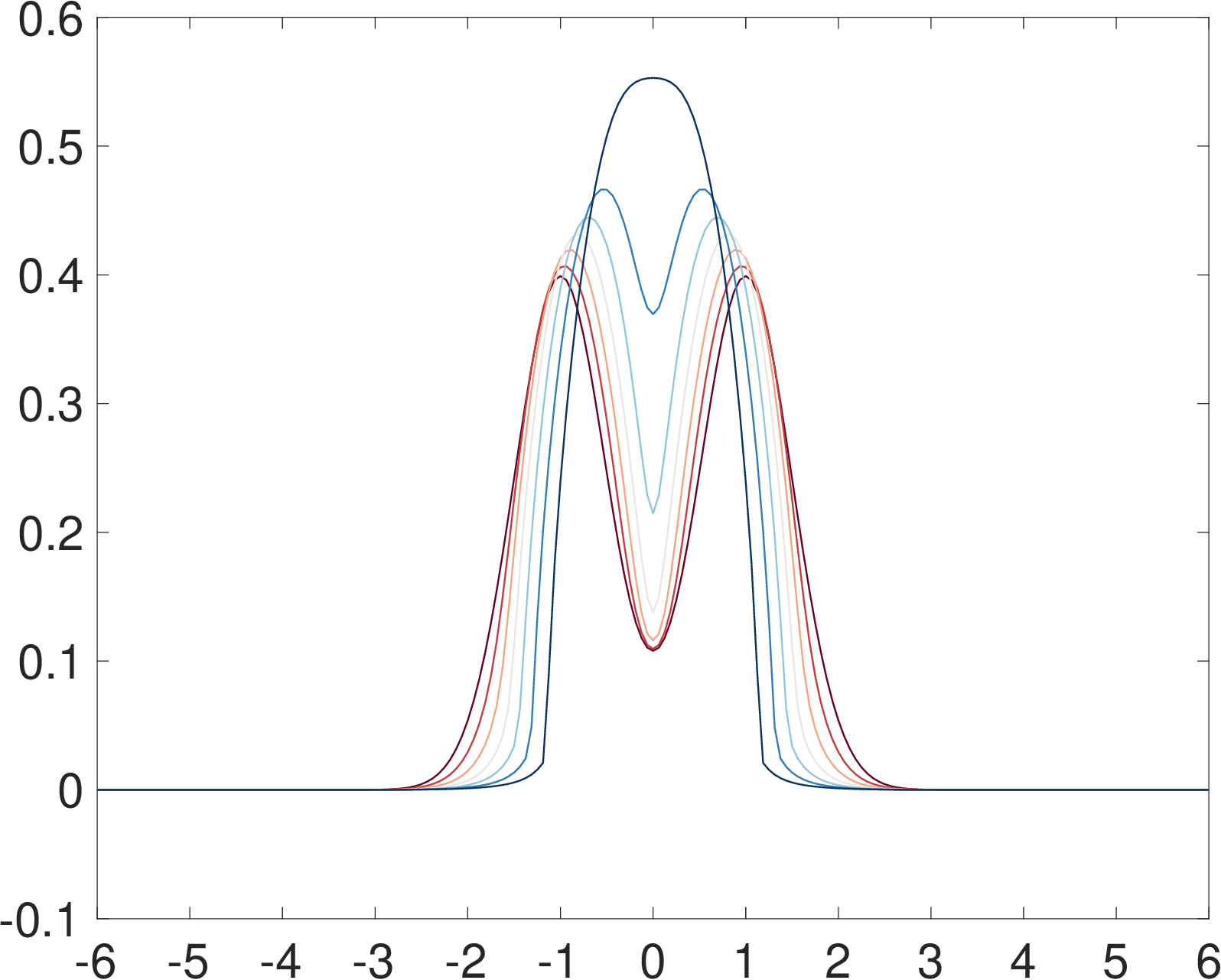}
\end{minipage}
(b)\begin{minipage}{.45\textwidth}
\centering\includegraphics[width=0.72\textwidth,height=0.48\textwidth]{ 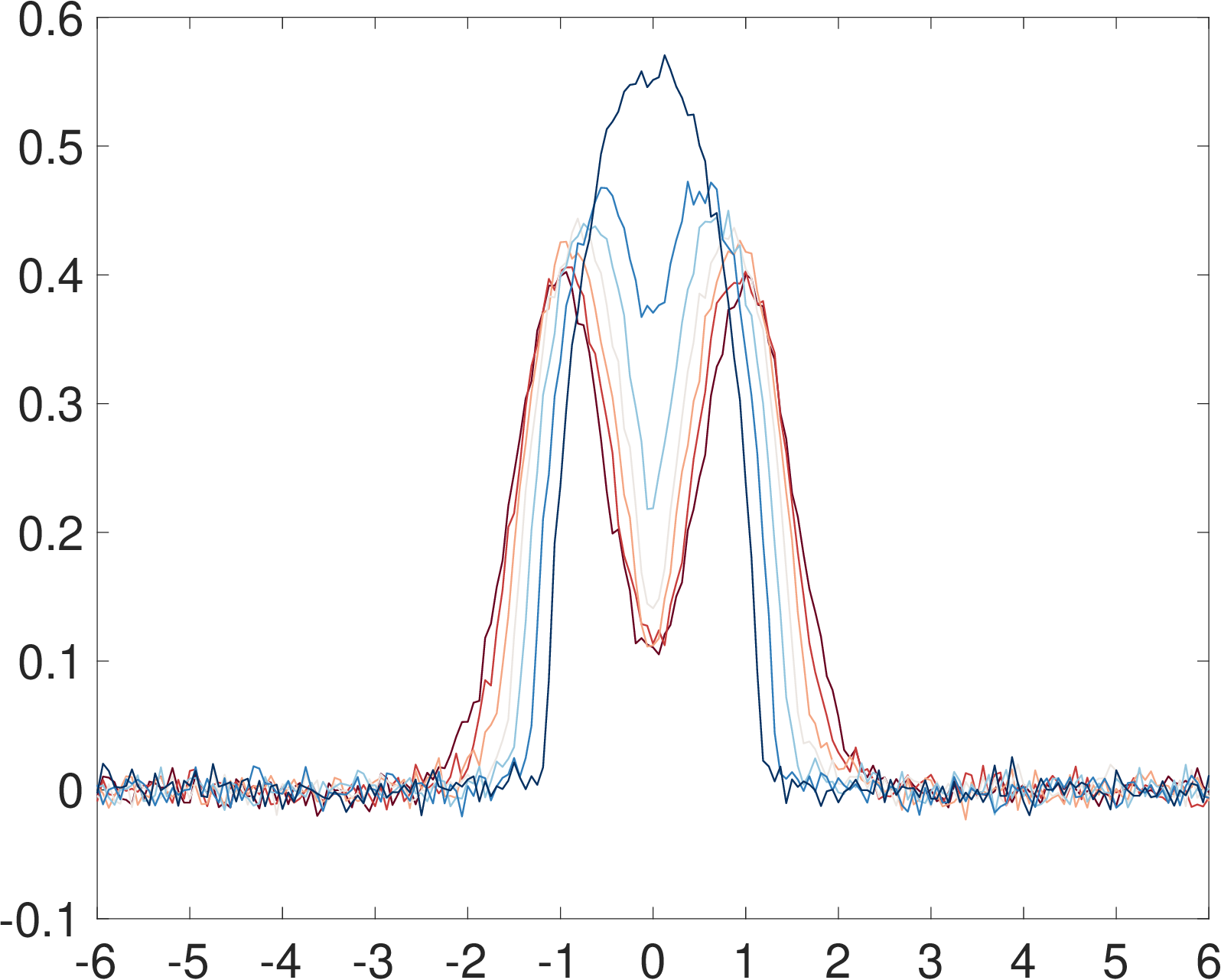}
\end{minipage}
\caption{Profile of the solution for $\Delta x = 5 \delta x, \Delta t = 2500 \delta t$. (a) a subset of solution data generated from the numerical solver (b) the solution data with $1\%$ noise added. }\label{fig:1dmeta}
\end{figure}

To estimate the interaction kernel, we use a  set of  exponential basis of the form  $\{\frac{|x|}{6}\exp(-w|x|^2): w=0.5:0.5:5 \}$ (see Section \ref{sec Notation} for notation). Then, the true interaction kernel is $2$-sparse with respect to this particular basis representation. 
Figure \ref{fig:1d_incoherence} (a) shows that it yields a very coherent basis in our sparse learning problem, as the  coherence parameter ranges from 0.982 to 1 (see discussion in Section \ref{subsec: bp_intro}). In the algorithm, we set the $K=2$ and PartInv can yield a very accurate estimation as observed in Figure \ref{fig:rec_1dmeta_perturbation} (a), for the solution data in Figure \ref{fig:1dmeta} (a) and (b), where the time observations are very sparse. We also explore its robustness with respect to different $\Delta x$ and different noise levels for the choice $\Delta x =5\delta x, \Delta t = 2500 \delta t$ and summarize the result in Figure \ref{fig:rec_1dmeta_perturbation} (b)-(d),  {where we also compare with the subspace pursuit and LASSO approach. We see that the reconstruction error is significantly smaller using our approach.}

\begin{figure}[!ht]
\centering (a)
\begin{minipage}{.45\textwidth} \centering

\includegraphics[width= 0.9\textwidth,height=0.6\textwidth]{ 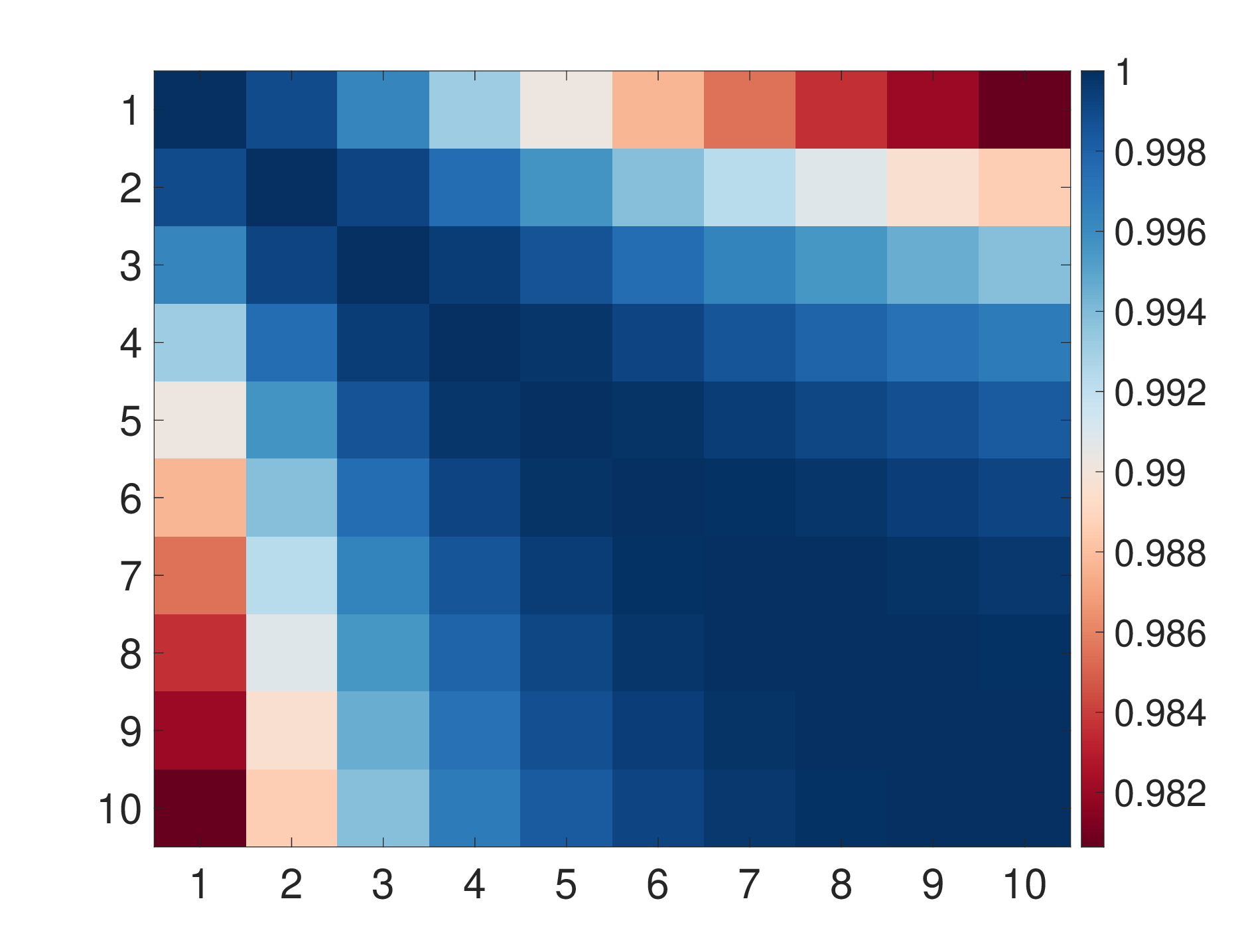}
\end{minipage} 
(b)\begin{minipage}{.45\textwidth}  \centering
\includegraphics[width=0.9\textwidth,height=0.6\textwidth]{ 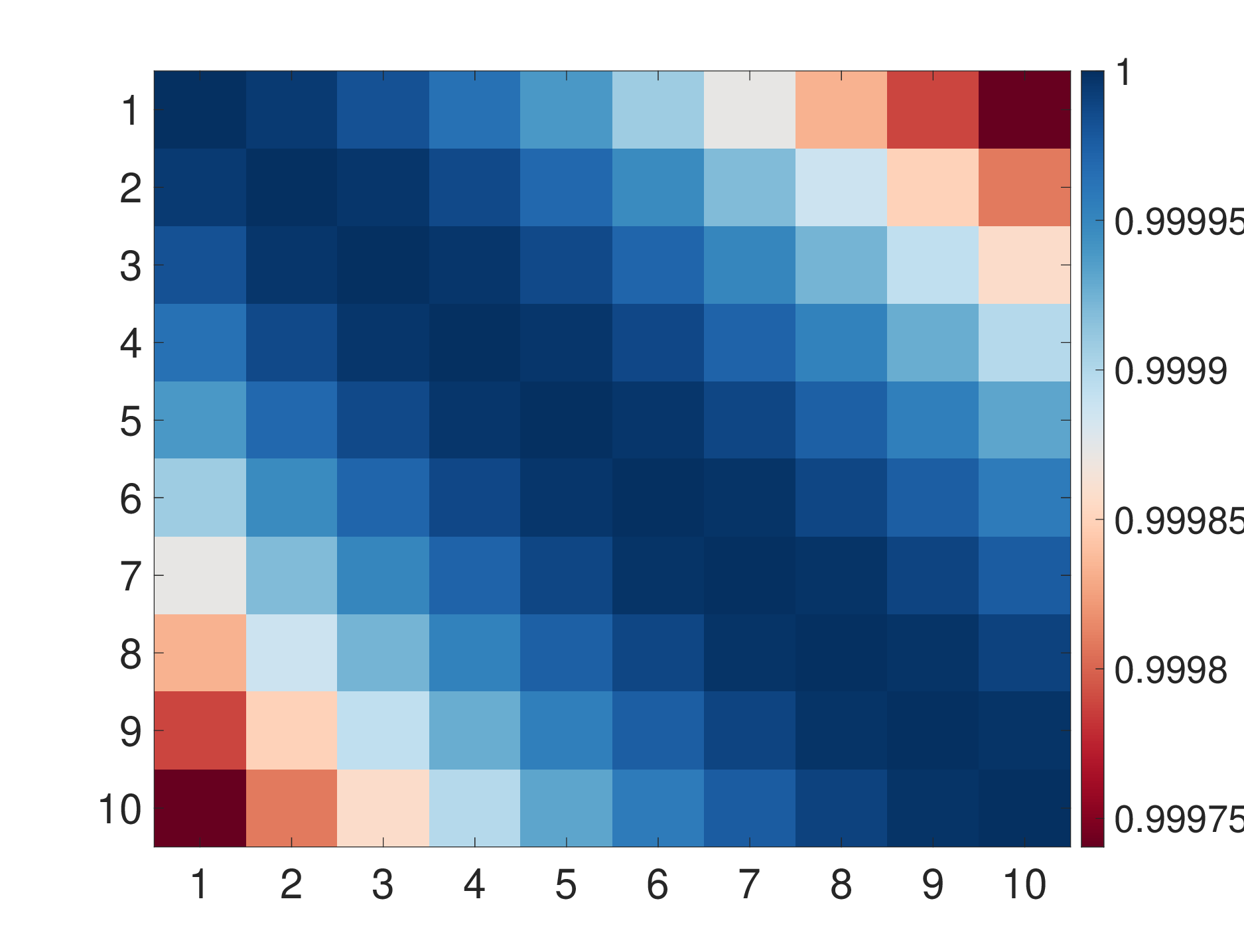}
\end{minipage}

\caption{Patterns of incoherence in the regression matrix, illustrating the entries of the product of the normalized regression matrix and its transpose. (a) corresponds to Example 2, where we use Gaussian type basis of size 10. (b) corresponds to Example 3, where we use polynomial basis of size 10. }\label{fig:1d_incoherence}
\end{figure}
\begin{figure}[!ht]
\centering (a)
\begin{minipage}{.45\textwidth} \centering 
\includegraphics[width=0.95\textwidth,height=0.6\textwidth]{ 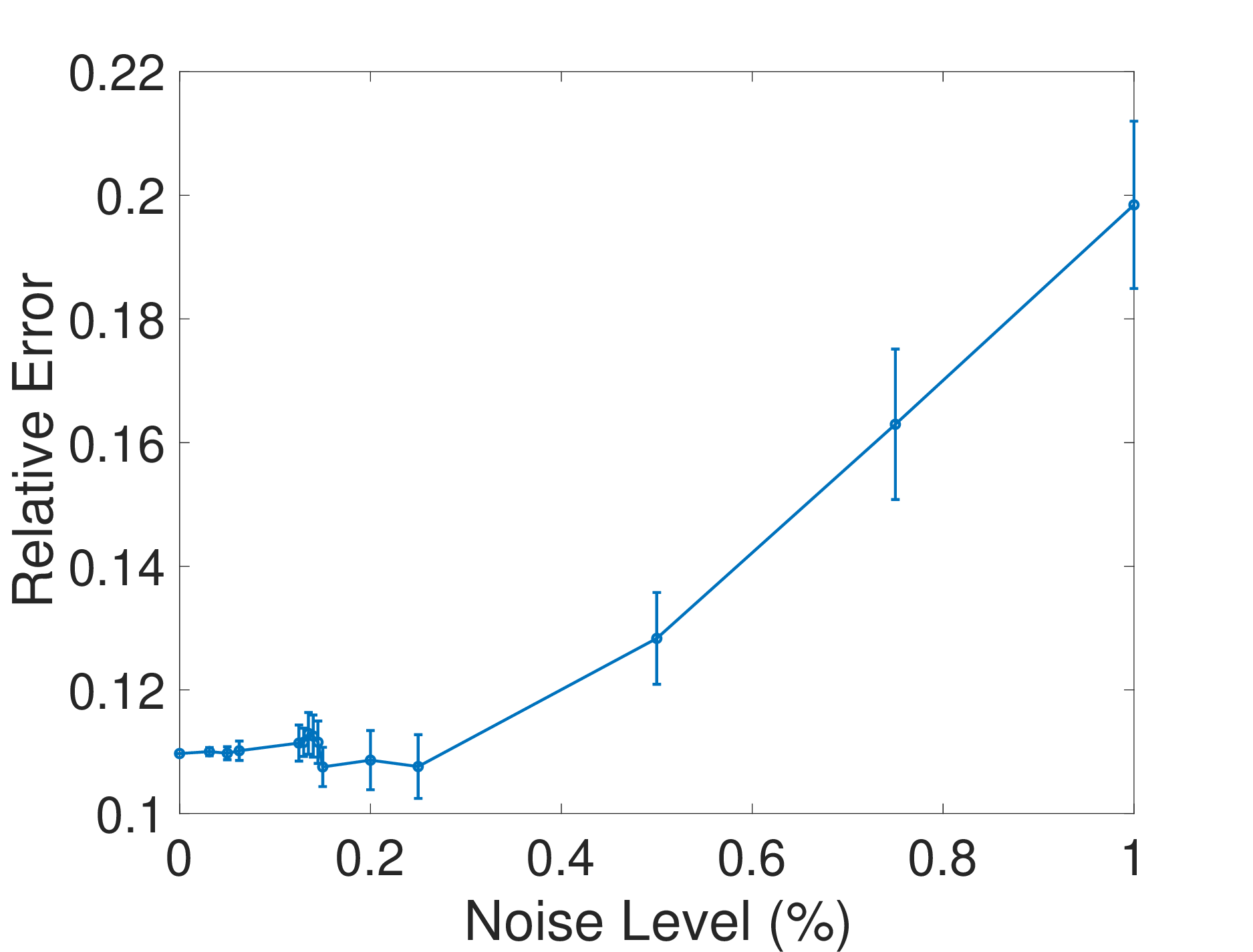}
\end{minipage}
(b)\begin{minipage}{.45\textwidth} \centering 
\includegraphics[width=0.95\textwidth,height=0.6\textwidth]{ 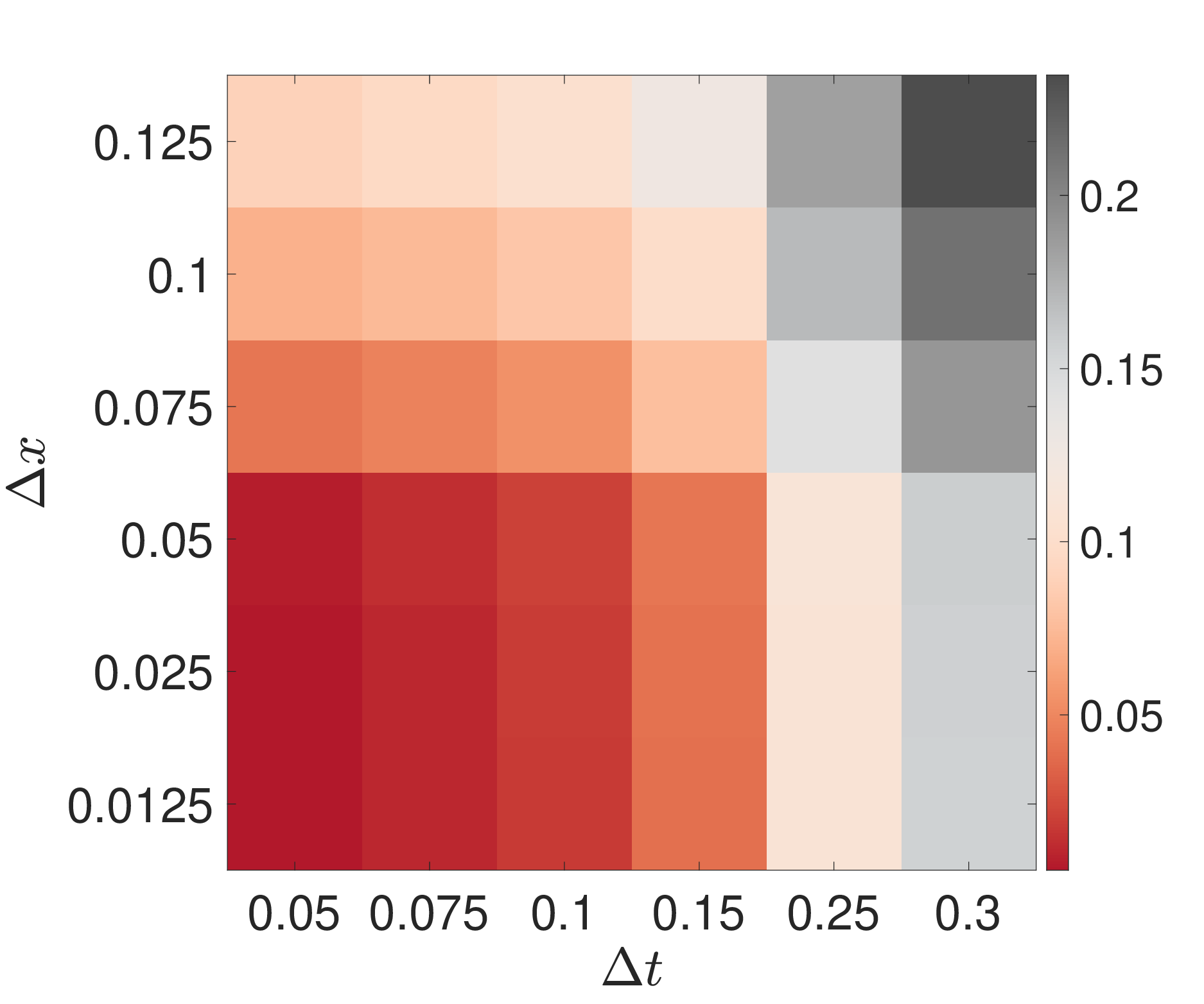}
\end{minipage} 

(c)\begin{minipage}{.45\textwidth}  \centering
\includegraphics[width=0.95\textwidth,height=0.6\textwidth]{ 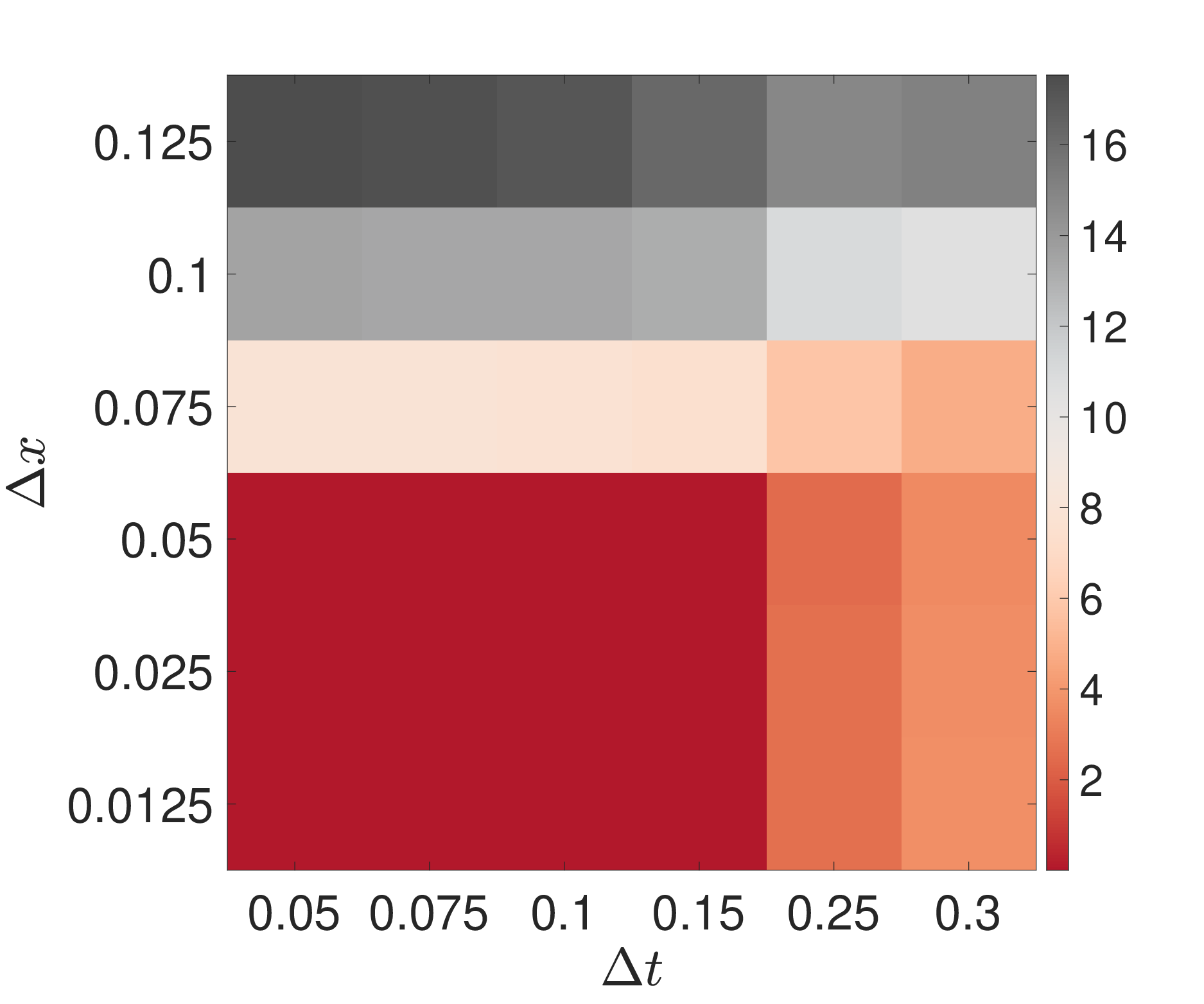}
\end{minipage}
(d)\begin{minipage}{.45\textwidth}  \centering
\includegraphics[width=0.95\textwidth,height=0.6\textwidth]{ 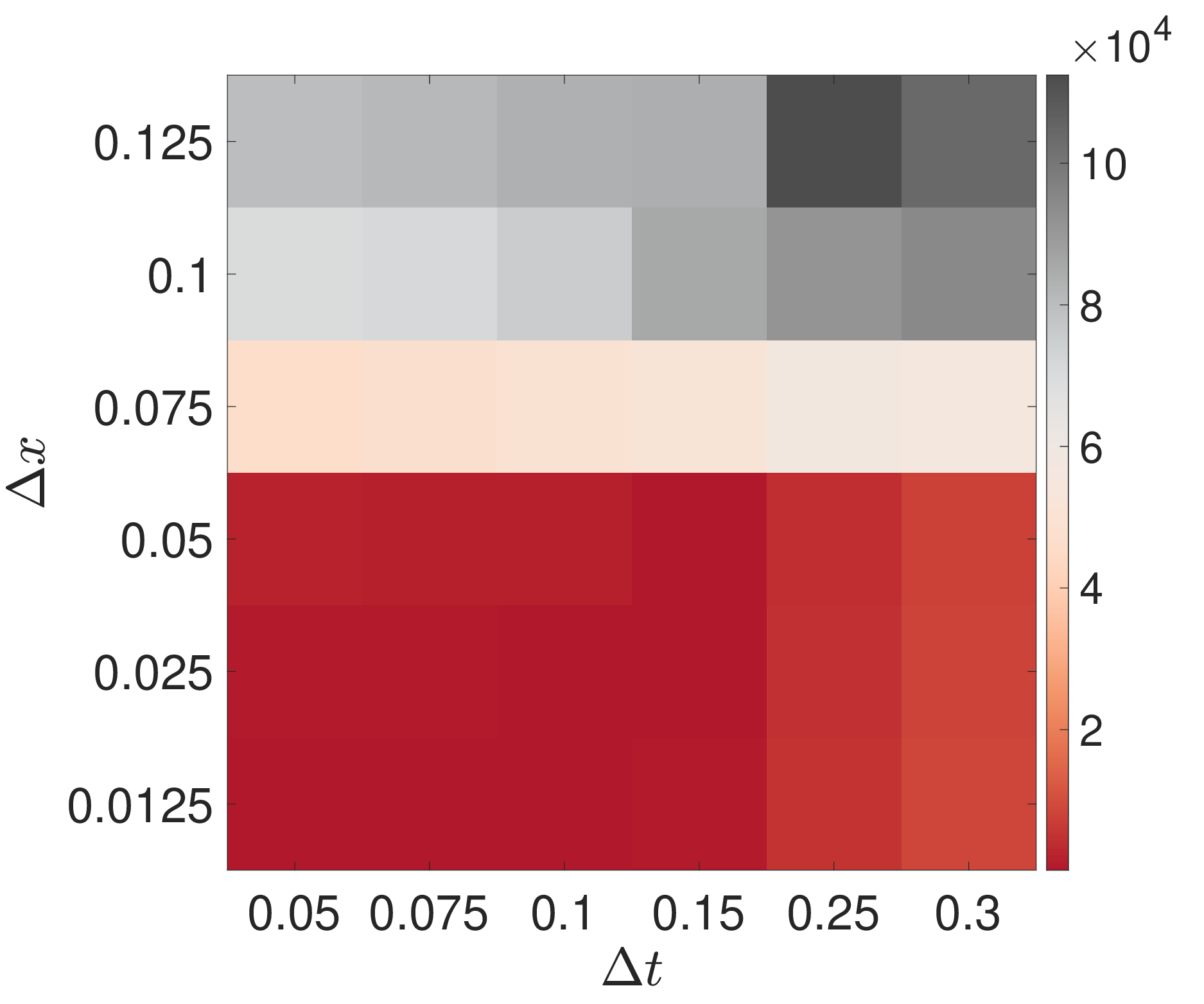}
\end{minipage}
\caption{Reconstruction errors for solution data in Figure \ref{fig:1dmeta}. (a) We have different levels of noise perturbation  where we display the mean and standard deviation of relative errors over 100 trials. (b)  Accuracy of PartInv with $K=2$ using different $(\Delta x, \Delta t)$.  (c) Accuracy of subspace pursuit with $K=2$. (d)  Accuracy of LASSO. For the LASSO algorithm, the Matlab-built-in LASSO solver was employed with the 'IndexMinMSE' option.}\label{fig:rec_1dmeta_perturbation}
\end{figure}

\paragraph{Example 3 (Linear diffusion with external potential $V$)}

In this one-dimensional example, we consider an external confinement potential given by a double-well function and linear diffusion. Therefore we have
\[
\partial_t\rho=\kappa\partial_{xx}^2\rho+(\rho(W*\rho+V)_x)_x\ ,\ \textnormal{where}
\]

\begin{equation*}
W(x)=\frac{|x|^2}{2}\ ,\qquad V(x)=\frac{|x|^4}{4}-\frac{|x|^2}{2}\ .
\end{equation*}
This equation describes a model for self-propelled agents \cite{barbaro2016phase} with a noisy term given by the linear diffusion. The confinement potential describes the tendency of individuals to move in a preferred direction while the interaction potential $W$ models the alignment component of the movement.
We consider $\kappa=0.1$ and simulation parameters are provided in Table \ref{t:KF_params}. As expected, low values of the diffusion coefficient result in flocking for certain initial conditions \cite{CGPS18}.

\begin{table}[H]
\centering
\begin{tabular}{| c | c | c | c | c | c |c|}
\hline 
 $\delta t$ & $\delta x$ & Time domain  & Spatial domain & Initial condition & $\phi (|x|)$  \\ 
\hline 
 $ 10^{-2}$ & $1.2\cdot10^{-2}$ &$[0,5]$ & $[-6,6]$ &  $\mathcal{N}(0,0.3^2)$ & $|x|$\\
\hline
\end{tabular}
\caption{\textmd{{(KF) { Parameters to produce the solution data using the finite volume scheme.}} }}
\label{t:KF_params}
\end{table}
Figure \ref{fig:kF} illustrates the solution profile used as training data. Given that we started with symmetric initial data, a symmetric steady state is anticipated \cite{bailo2020fully}. For kernel estimation, we use a polynomial basis of the form \(\left\{\left(\frac{|x|}{6}\right)^{n}: n=0, \ldots, 9\right\}\), such that \(\phi\) is \(1\)-sparse relative to this dictionary. More explicitly, the coefficient for \(\psi_2\) is \(6\), because $\psi_2$ corresponds to $n=2$ in the previous set. The normalization factor of \(6\) on the basis ensures bounded entries in the matrix \(\mathbf{A}_{n,M,L}\).

Provided continuous-time trajectory data, the uniqueness of a $1$-sparse solution to the normal Equation \eqref{normalequation} lies in the prerequisite that any two distinct columns of matrix $\mathbf{A}$
 are linearly independent. This 1-sparse solution is the coefficient of the true interaction kernel. 

Nonetheless, our numerical result reveals that every pair of columns in \(\mathbf{A}\) tends to exhibit near-linear dependence, a phenomenon   evidenced by the coherence patterns manifested in its empirical regression matrix \(\mathbf{A}_{n,M,L}\), as shown in Figure \ref{fig:1d_incoherence} (b). This suggests that sparse identification of kernels from discrete noisy data is expected to  be difficult, even though the ground truth is $1$-sparse. This anticipation is, in part, foreseeable, considering that the potential of type $|x|^n$ is capable of promoting analogous collective dynamics. 

The unfavorable coherent patterns lead to failures in LASSO and SINDy estimators, as depicted in Figure \ref{fig:rec_KF_perturbation} (c)-(d). In contrast, the greedy type methods yield much more accurate estimations see Figure \ref{fig:rec_KF_perturbation} (a) for PartInv and (b) for CoSamP.

When the solution data is tainted by noise, kernel identification becomes considerably more challenging. In Figure \ref{fig:rec_KF_perturbation} (e), we employ PartInv on the data, introducing a noise level of $0.5\%$ and imposing a sparsity constraint \(K=2\).  Notably, even when {PartInv} outputs a support set \(\{1,2\}\) encompassing the true support \(\{2\}\), the estimated coefficients diverge substantially from the ground truth (see 3rd row of Table \ref{tab:KF_supppruning}). We first used {RE} to narrow down two candidates and then compute their {TEEs} utilizing a space-time mesh size \((\delta x, 0.1\delta t)\) and evolve the PDE over the time interval \([0,0.1]\). In this instance, the TEE emerges as a robust quantitative metric, aiding in the identification of the correct support set \(\{2\}\). As a result, we obtain a significantly accurate estimation.\\

For this example, we also compare our loss function with the PDE residual. In Figure \ref{fig:rec_KF_perturbation} (f), we show that,
even when provided with the true support, the restricted least squares (note that this is the optimal outcome attainable through a sparsity-promoting algorithm) yields an estimated coefficient that is notably divergent from the ground truth of {  6}. This demonstrates the advantages of our loss functional over the PDE residual. 

\begin{figure}
\centering
(a) \begin{minipage}{.45\textwidth} \centering  
\includegraphics[width=0.72\textwidth,height=0.48\textwidth]{ 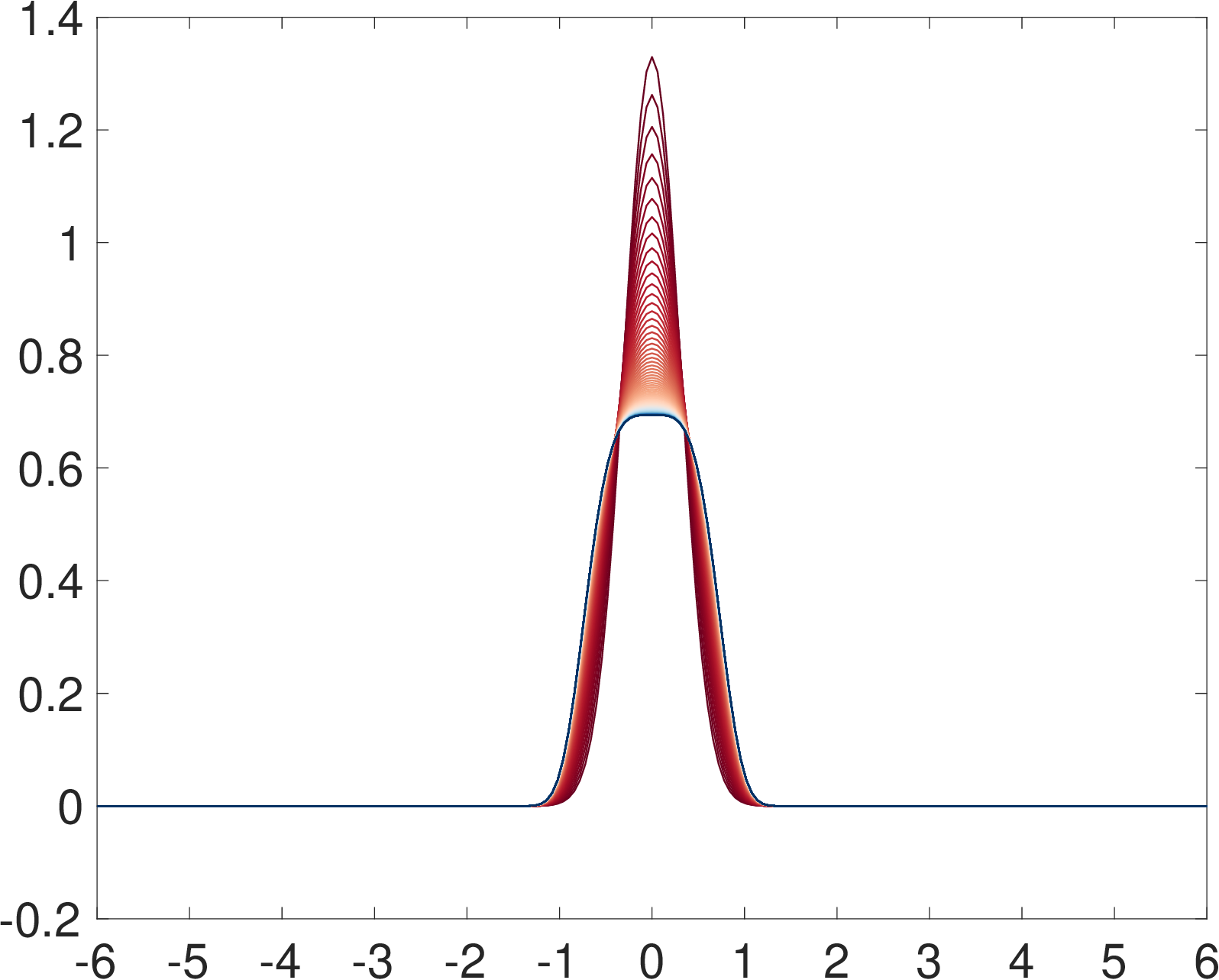}
\end{minipage}
(b) \begin{minipage}{.45\textwidth} \centering  
\includegraphics[width=0.72\textwidth,height=0.48\textwidth]{ 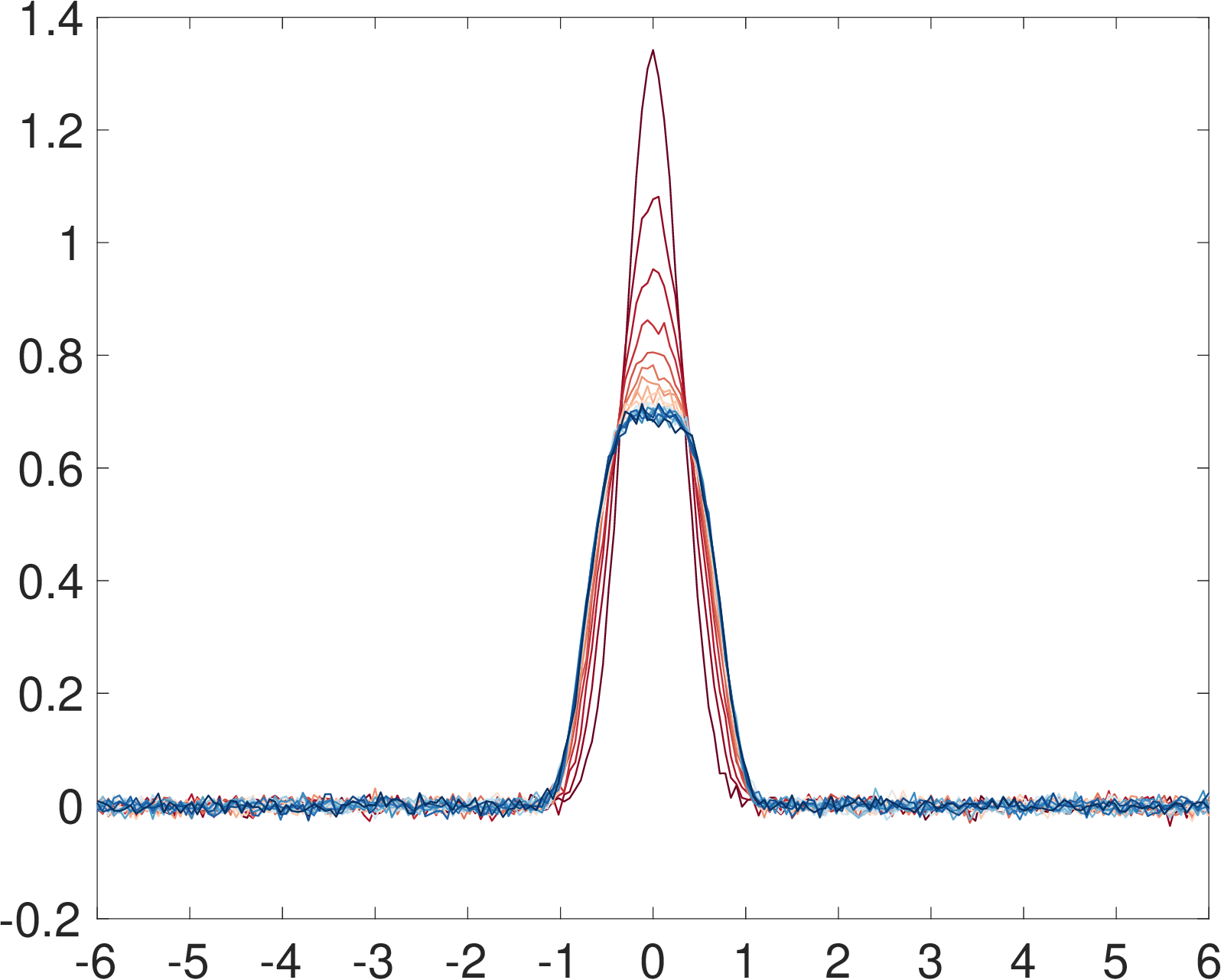}
\end{minipage}

\caption{Profile of the solution for $\Delta x = 5 \delta x, \Delta t = 5 \delta t$.  (a) a subset of the solution data generated from the numerical solver (b) the solution data with 0.5\% noise added. }\label{fig:kF}
\end{figure}

\begin{figure}[htbp]
\centering (a)
\begin{minipage}{.45\textwidth} \centering

\includegraphics[width= 0.95\textwidth,height=0.6\textwidth]{ 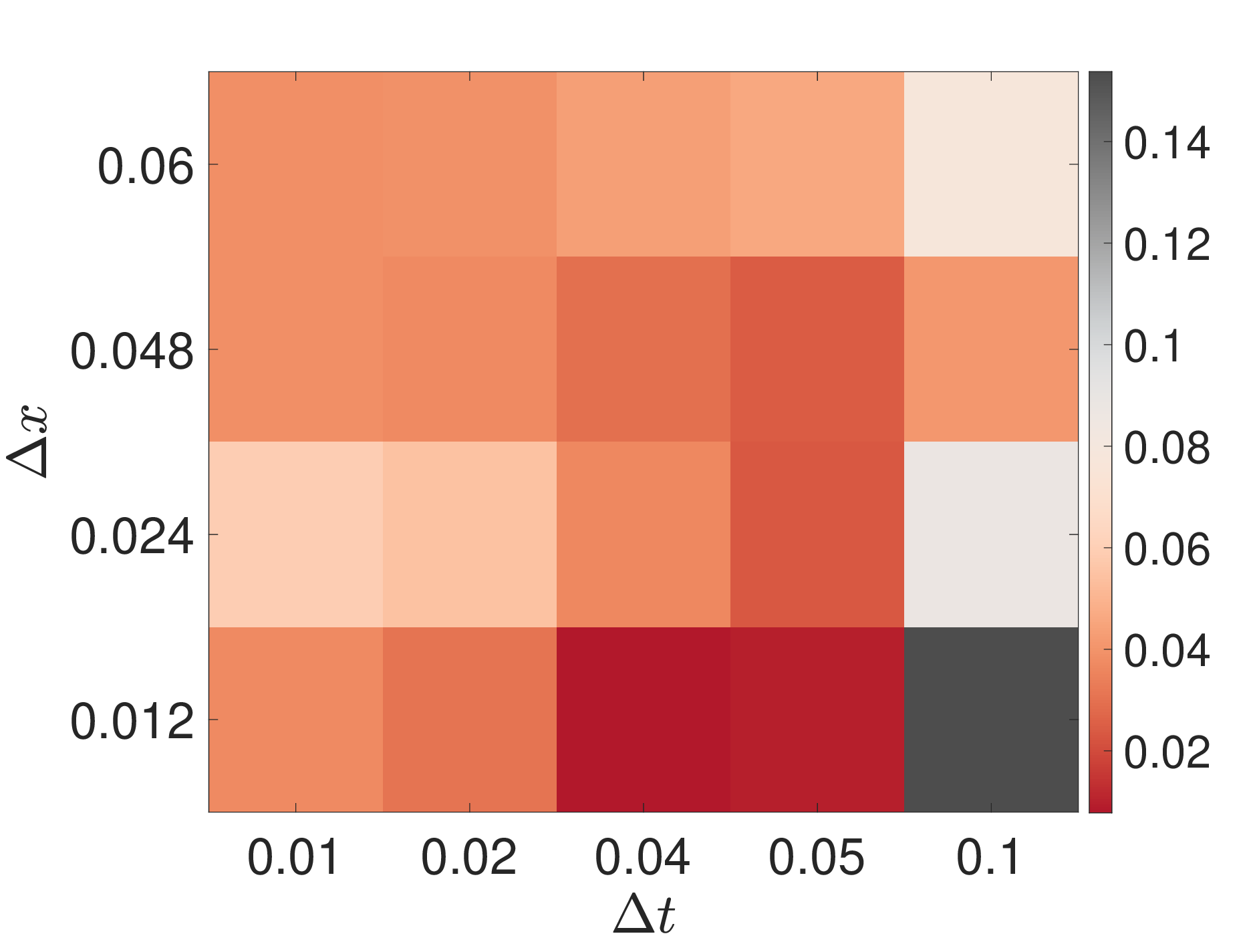}
\end{minipage} 
(b)\begin{minipage}{.45\textwidth}  \centering

\includegraphics[width=0.95\textwidth,height=0.6\textwidth]{ 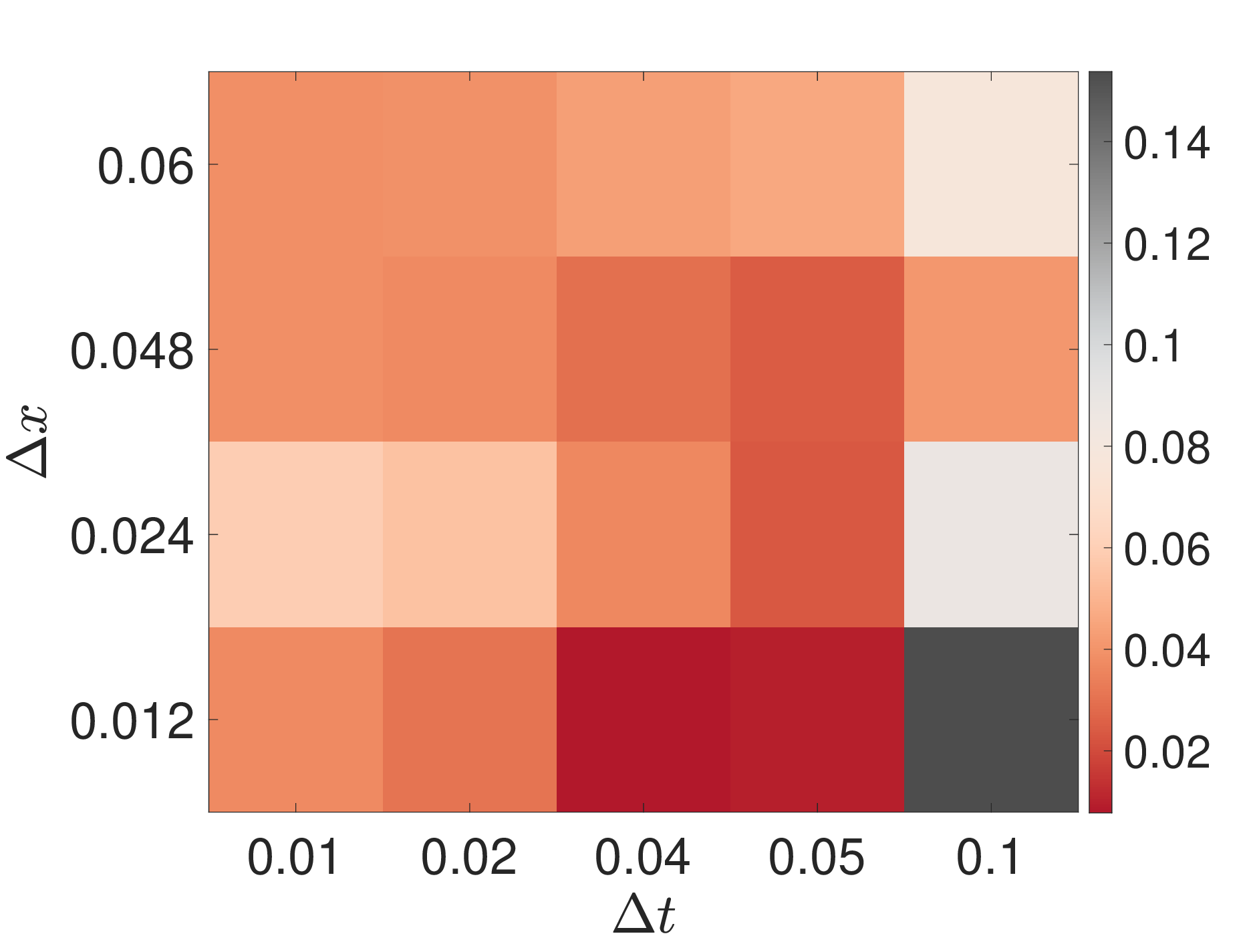}
\end{minipage}

 (c)
\begin{minipage}{.45\textwidth} \centering

\includegraphics[width= 0.95\textwidth,height=0.6\textwidth]{ 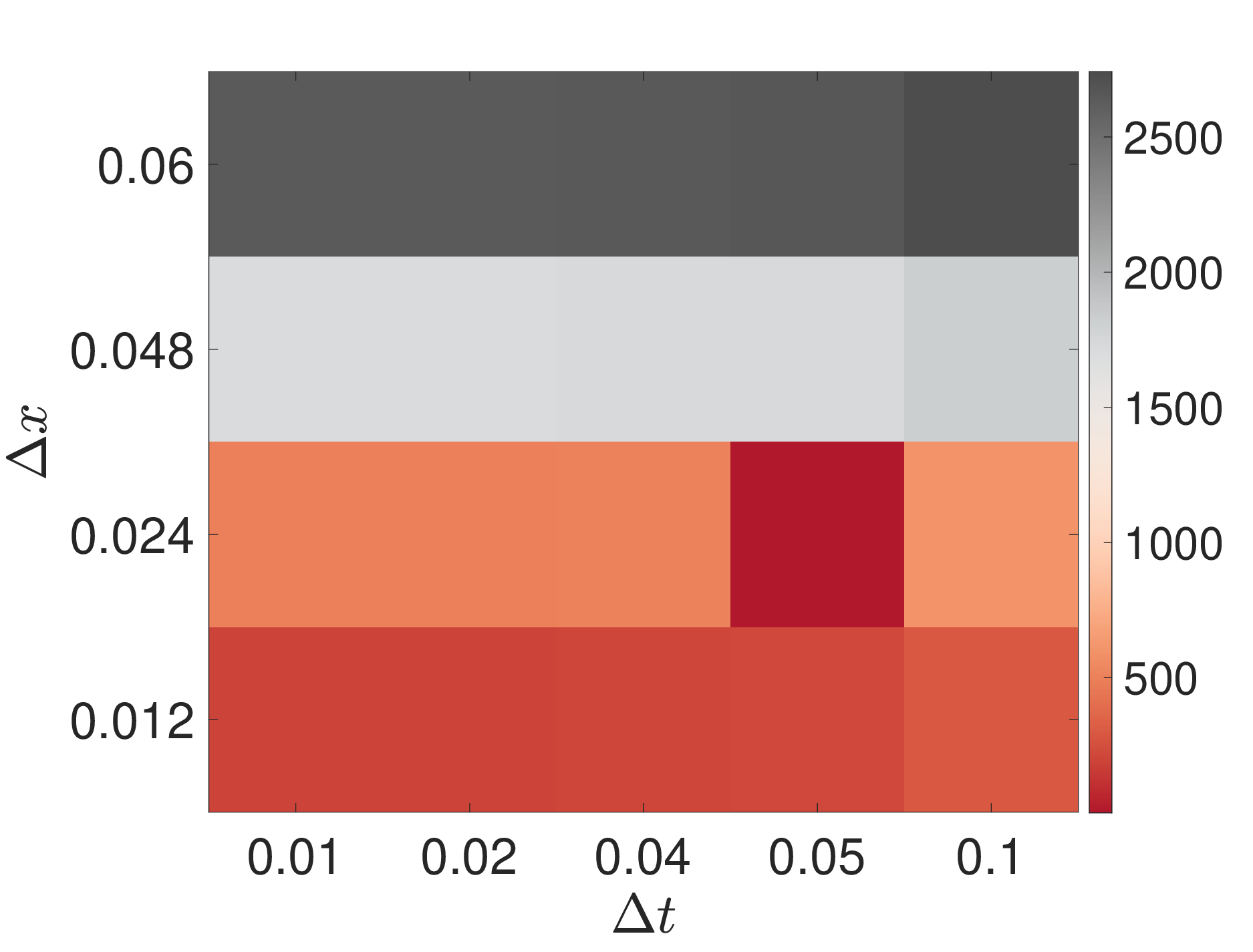}
\end{minipage} 
(d)\begin{minipage}{.45\textwidth}  \centering

\includegraphics[width=0.95\textwidth,height=0.6\textwidth]{ 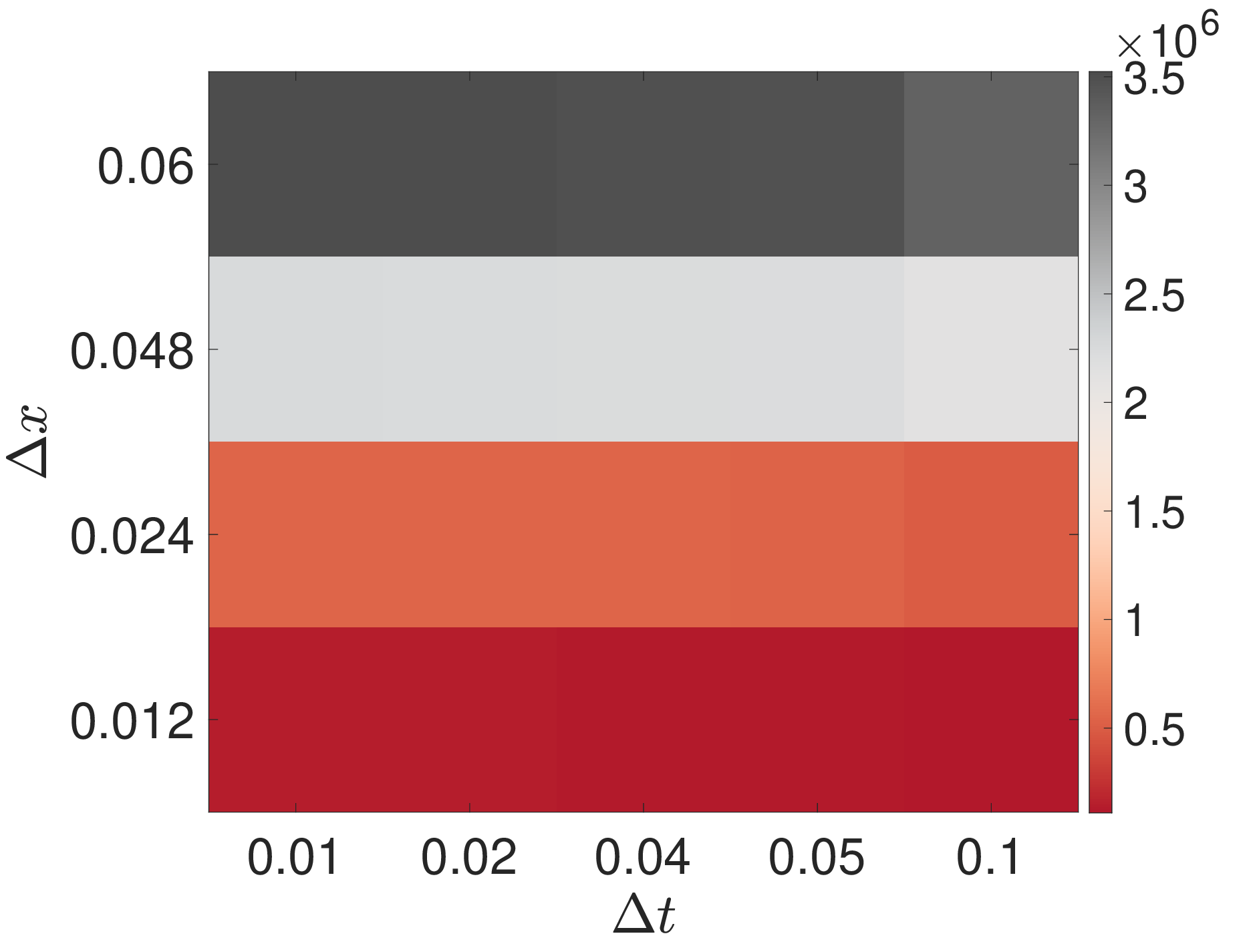}
\end{minipage}

(e)\begin{minipage}{.45\textwidth} \centering
\includegraphics[width=0.78\textwidth,height=0.55\textwidth]{ 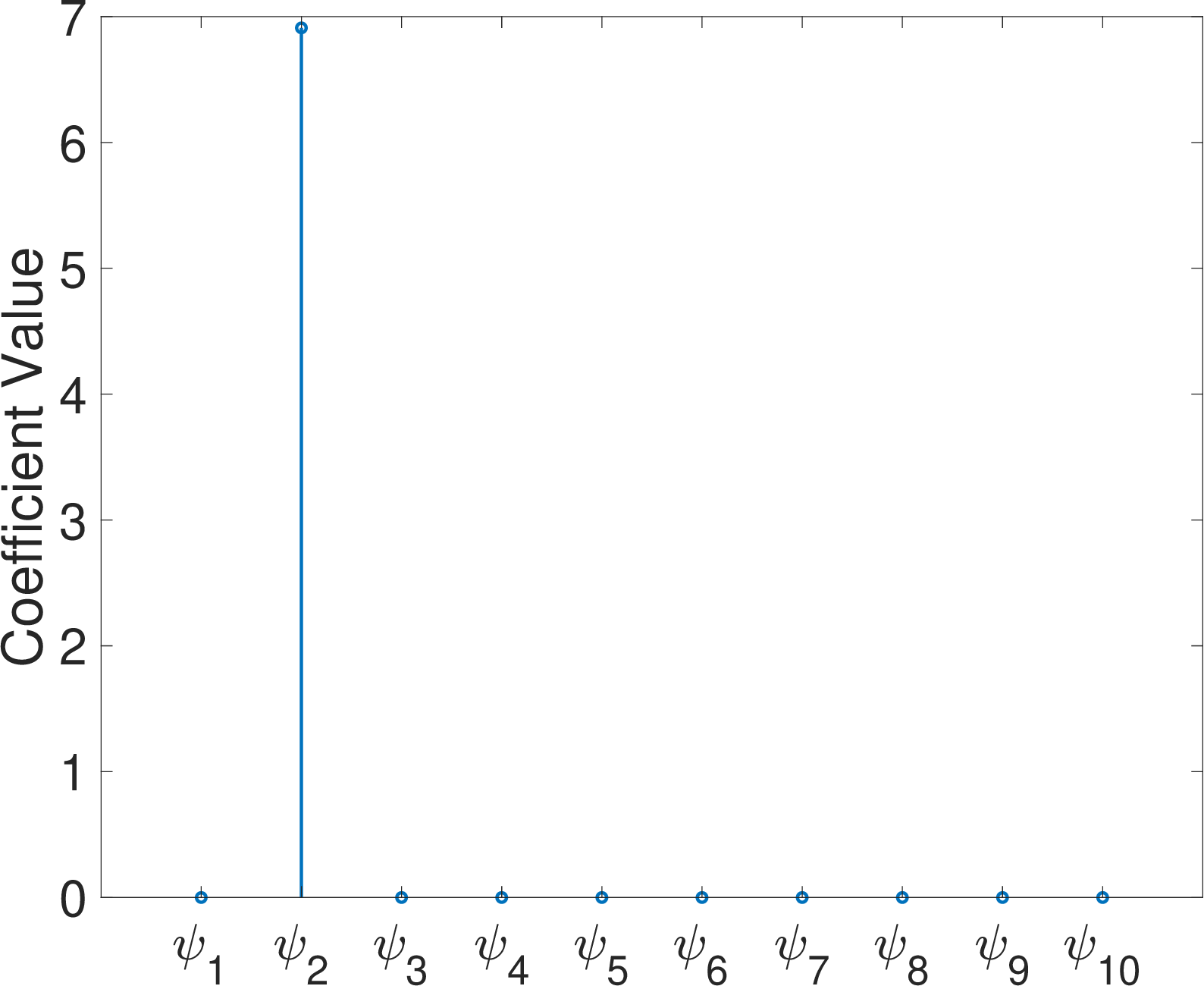}
\end{minipage}
(f)\begin{minipage}{.45\textwidth}  \centering
\includegraphics[width=0.9\textwidth,height=0.6\textwidth]{ 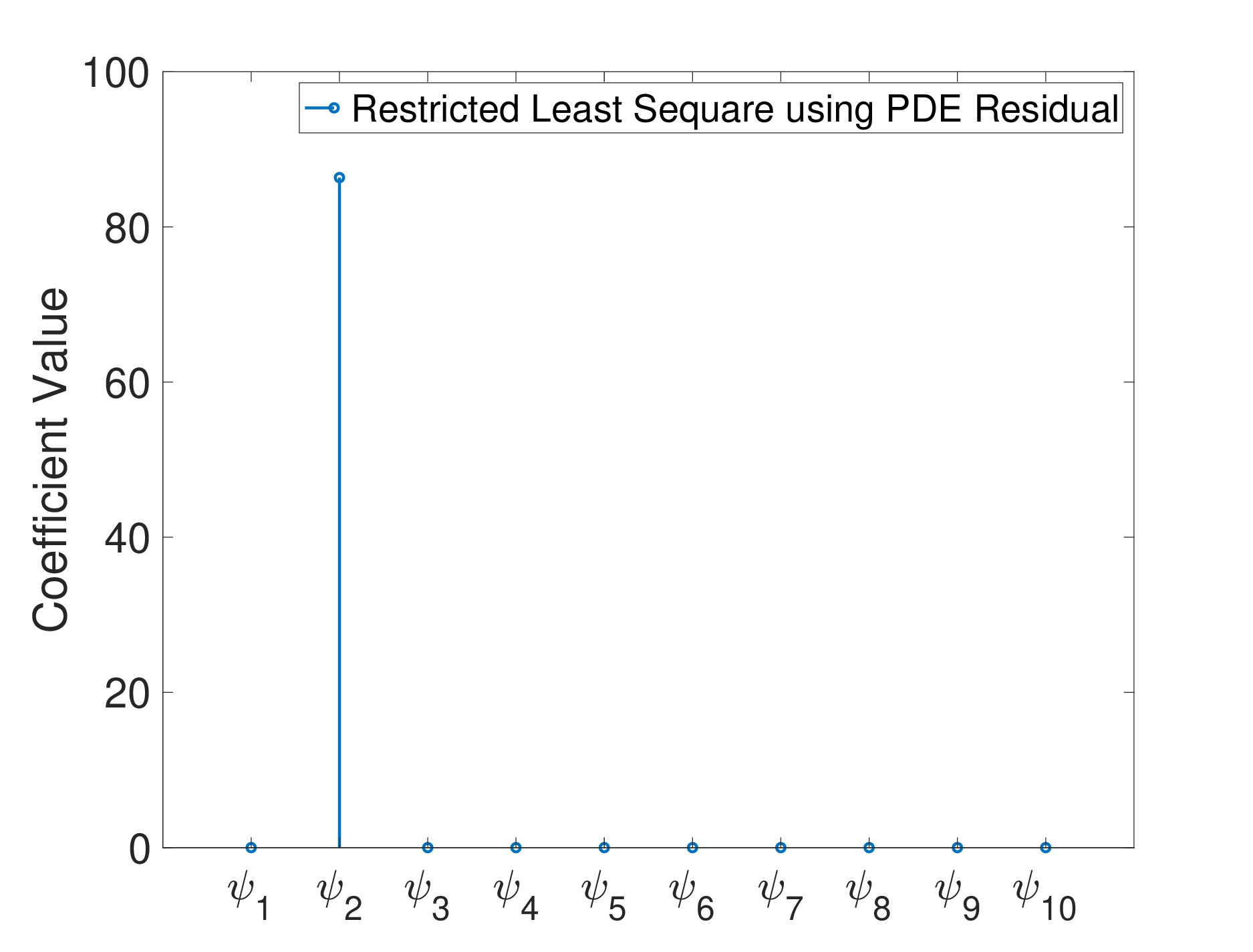}
\end{minipage}
\caption{ (a) the learning outcomes of PartInv with a parameter \(K=3\) with respect to space-time mesh size.  (b)-(d): a comparative analysis with  subspace pursuit algorithms (b), LASSO (c), and SINDy (d) using identical training data, where the subspace pursuit algorithm was configured with a sparsity level set to 3.  We found results in (a) and (b) are very close. In (e) we used  the same training data as in Figure \ref{fig:kF} (b) by choosing $K=2$ and performing support tuning where the numerical values are summarized in Table \ref{tab:KF_supppruning}. In (f), the training data is the same as in (e) and we display the restricted least squares estimator using the PDE residual provided the true support set $\{2\}$.  }\label{fig:rec_KF_perturbation}
\end{figure}

\begin{table}[H]
    \centering
    
 \begin{tabular}{|c|c|c|c|c|}
        \hline
        Active terms & Coefs & RE & TEE \\
        $\psi_1$ & 0.87 & -1.22 & 0.39 \\
        $\psi_2$  & \textbf{6.91} & -3.34 & \textbf{0.13}\\  
      $[\psi_1,\psi_2]$ & [40.20, -313.11] & 121.19&\\
    \hline
    \end{tabular}\caption{Numerical values for support pruning. When $K=2$, PartInv produced $I^{(k)}=\{1,2\}$ and we perform support pruning by calculating the TEE on time steps $0:5\delta t: 10 \delta t$. }
    \label{tab:KF_supppruning}
\end{table}

\subsection{Two dimensional examples}

In this part, we delve into the performance of two dimensional examples. Compared to one dimension scenarios, 2D examples {require considerably more computational resources since} the computation of the integration kernel $G$, given by \eqref{ggg}, becomes increasingly complex with higher dimensionality, reaching computational limits if the mesh size exceeds 100. We present two specific examples: the first involves closed-form analytic solutions, a scenario free from forward errors.  The second example features observations on a very coarse scale, designed to evaluate the effectiveness and robustness of the PartInv method with the implementation of support pruning.

\paragraph{Example 4 (2D Fokker Planck Equation with Nonlinear diffusion)} 

In this 2D scenario, we examine a nonlinear diffusion case characterized by $m=2$, with $\kappa$ set to 1. The functions $W(\mathbf{x})$ and $V(\mathbf{x})$ are defined as follows:
\begin{equation*}
W(\mathbf{x})=\frac{|\mathbf{x}|^2}{2}, \qquad V(\mathbf{x})=0.
\end{equation*}
In this case we have
\begin{equation*}
\nabla W(\bx)= \nabla\Phi(|\bx|)=\phi(|\bx|)\frac{\bx}{|\bx|}\ .
\end{equation*}
Note that now $\bx\in \mathbb{R}^2$, and to avoid instability issues when \(\boldsymbol{x}\) is close to the origin, we learn the kernel \(\frac{\phi(|\boldsymbol{x}|)}{|\boldsymbol{x}|}\) instead. Analogous to the Example 3, we employ the polynomial basis \(\{1,|\boldsymbol{x}|,\ldots, |\boldsymbol{x}|^{n-1}\}\) with \(n=10\), {so the true kernel is  $1$-sparse to this dictionary of size $10$}. For the training data, we use the closed form of its stationary solution given by \(\rho_t(\mathbf{x}) =  \max\left(\sqrt{\frac{1}{\pi}}-|\mathbf{x}|^2,0\right)\). The computational parameters are summarized in Table \ref{t:2DFP_params}.

\begin{table}[H]
\centering
\begin{tabular}{| c | c | c | c |}
\hline 
 Time domain  & Spatial domain & $\frac{\phi (|\bx|)}{|\bx|}$  \\ 
\hline 
 $[0,0.1]$ & $[-2,2]\times [-2,2]$ & $1 $\\
\hline
\end{tabular}
\caption{\textmd{{(2DFP) Parameters to produce the solution data using the finite volume scheme.} }}
\label{t:2DFP_params}
\end{table}

\begin{figure}
\centering (a)
\begin{minipage}{.45\textwidth} \centering

\includegraphics[width= 0.9\textwidth,height=0.6\textwidth]{ 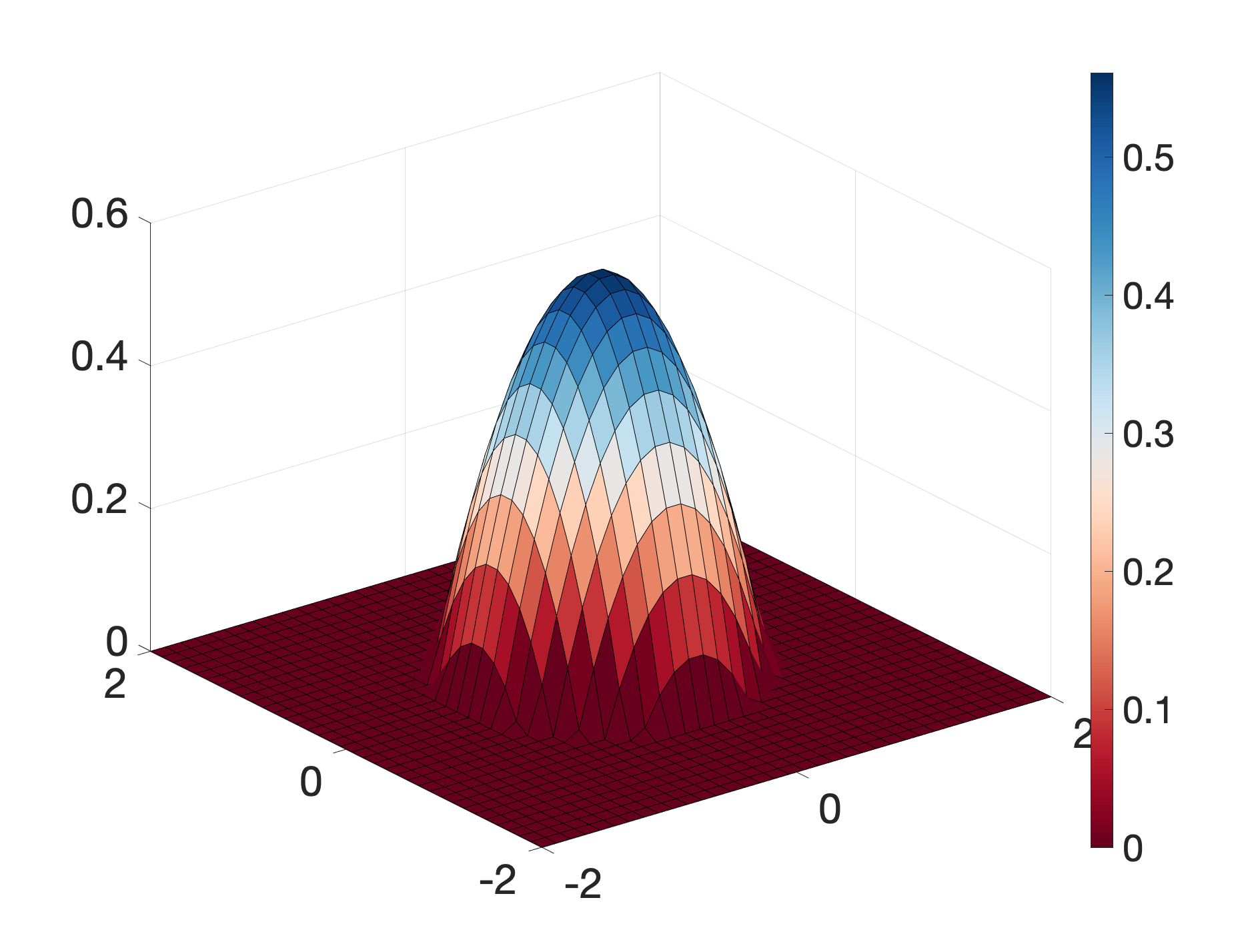}
\end{minipage} 
(b)\begin{minipage}{.45\textwidth}  \centering

\includegraphics[width=0.9\textwidth,height=0.6\textwidth]{ 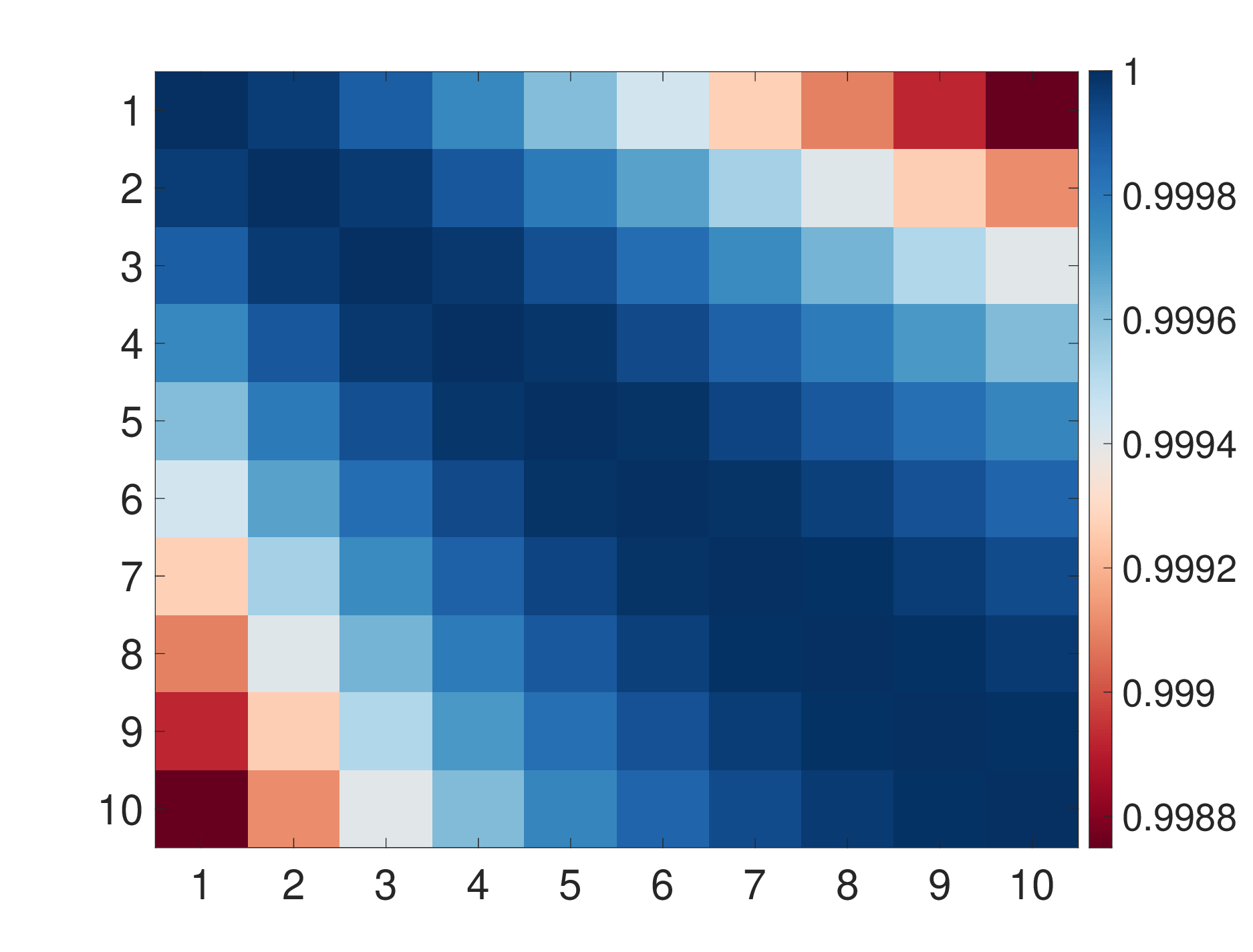}
\end{minipage}

(c)\begin{minipage}{.45\textwidth} \centering
\includegraphics[width=0.9\textwidth,height=0.6\textwidth]{ 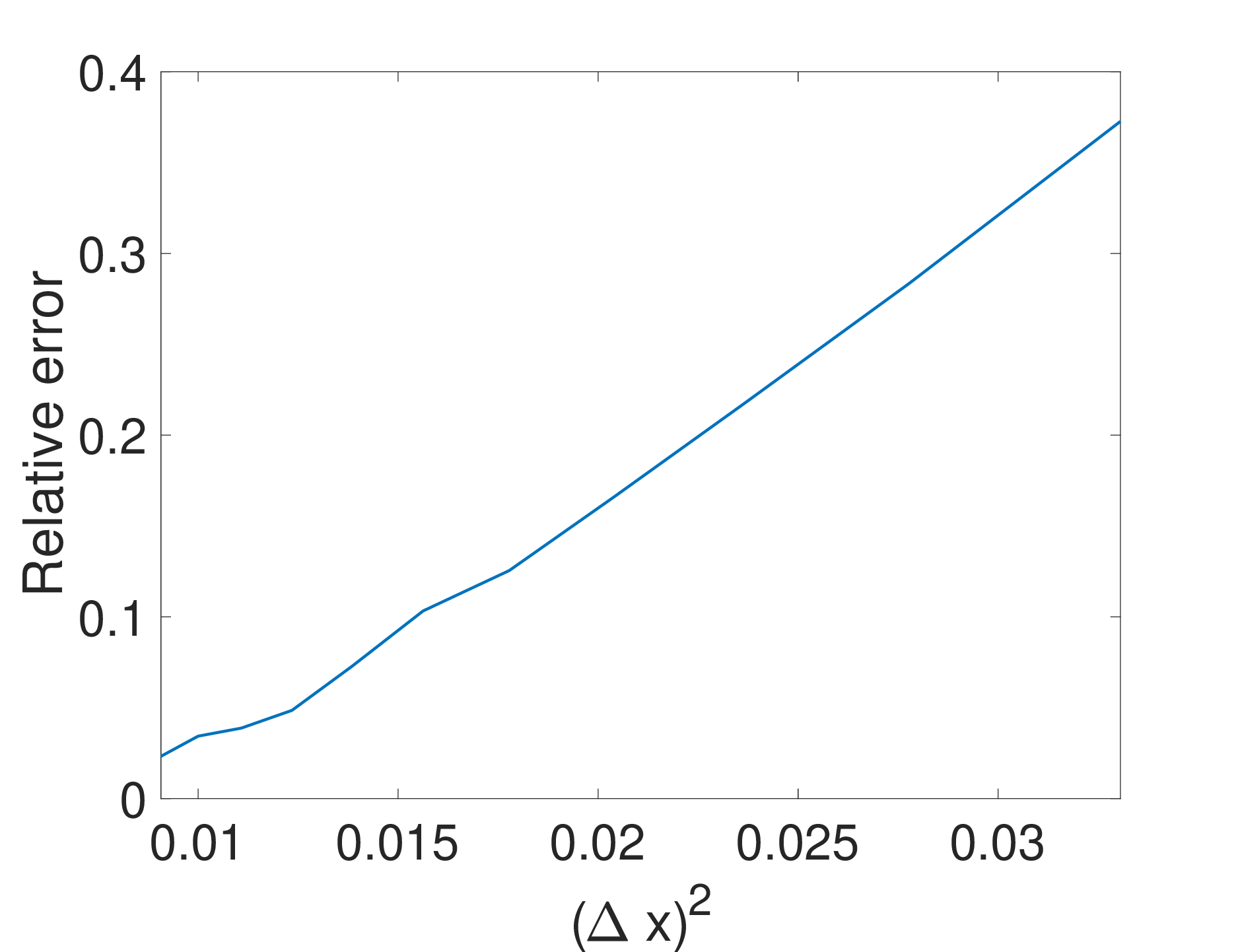}
\end{minipage}
(d)\begin{minipage}{.45\textwidth}  \centering
\includegraphics[width=0.81\textwidth,height=0.54\textwidth]{ 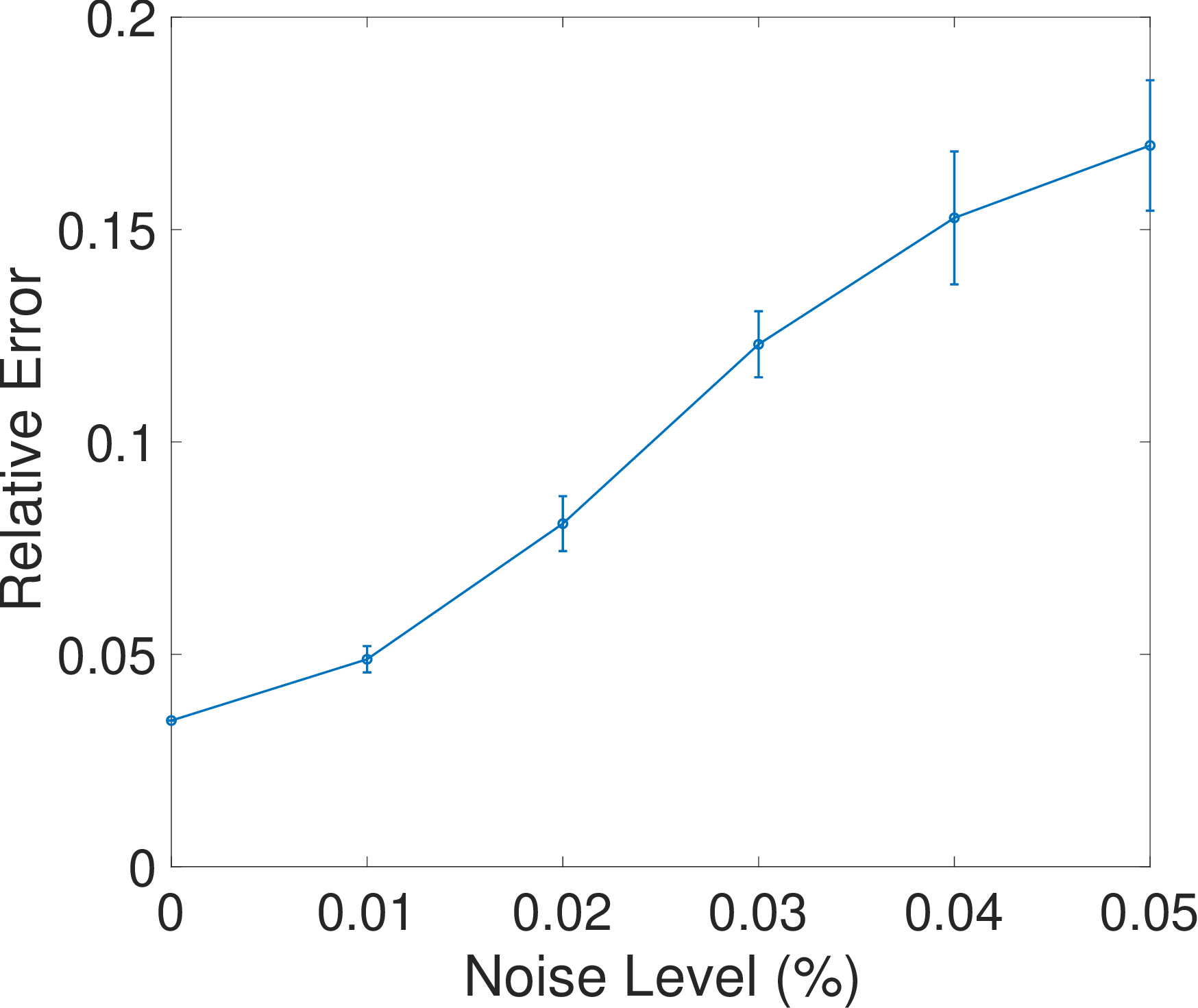}
\end{minipage}
\caption{ \textbf{Top Panel:} (a) The profile of the stationary solution at a single time instance (b) The coherence pattern of the regression matrix $\mathbf{A}_{n,M,L}$.   \textbf{Bottom Panel:}  (c) We run PartInv with $K=1$ and display the relative error versus the squares of space mesh size finding an approximately linear relationship. (d) We test the robustness of PartInv with $K=1$ for a variety of noise levels. }\label{fig:rec_stationary_perturbation}
\end{figure}
\noindent In the context of this identification problem, we observe a phenomenon within the regression matrix \(\mathbf{A}_{n,M,L}\) that mirrors { Example 2 and 3: it has highly coherent columns (see Figure \ref{fig:rec_stationary_perturbation} (b)).} Consequently, the estimation of the coefficient is acutely sensitive to the choice of nonzero locations, requiring the identification of the accurate support of the ground truth.

In Figure \ref{fig:rec_stationary_perturbation} (c), across various mesh sizes defined as \(\Delta x = \Delta y = \frac{4}{[22:2:40]}\), the PartInv algorithm with \(K=1\) {produced accurate estimators by identifying the correct support set} \(\{1\}\), and therefore effectively tackles the data corruption coming from discrete-time observations in this challenging basis pursuit problem. {For competitor methods, even with the training data where we have smallest mesh size  \(\Delta x = \Delta y = 0.1\),  the CoSaMP (also subspace pursuit) and LASSO estimators are not accurate and even failed to find the right support set.} 

As we use an analytic solution for the underlying PDE, the only error source is coming from the discrete time observations. In Figure \ref{fig:rec_stationary_perturbation} (c), we found that the convergence rate of the relative error with respect to space-time mesh size aligns with our theoretical error analysis in  Proposition \ref{prop: error A and b}. Given the absence of numerical error from the solver, the second-order convergence is achieved as we do not need to compute the time derivatives here. 

The robustness relative to the measurement noise was also tested, as depicted in Figure \ref{fig:rec_stationary_perturbation} (d).

\paragraph{Example 5 (2D nonlinear diffusion with nonlocal interaction)} In this example, we explore the performance of our algorithm using very coarse scale data that are subjected to both discretization and numerical solver forward errors.

We consider an initial condition and interaction potential given by
$$
    \rho_0(\mathbf{x})= 5\Bigl(\frac{e^{-((x+0.5)^2+(y+0.5)^2)}}{0.2}+ \frac{e^{-((x-0.5)^2+(y-0.5))^2}}{0.2}\Bigr)\ ,\quad W(\mathbf{x})=-3e^{-2{|\mathbf{x}|^2}}\ ,
$$
respectively. We set $m=2$, $\kappa=1$ and the rest of the computational parameters are summarized in Table \ref{t:2Dmetastable_params}.

\begin{table}[H]
\centering
\begin{tabular}{| c | c | c | c | c | c|}
\hline 
 $\delta t$ & $\delta x$ & Time domain  & Spatial domain &  $\frac{\phi (|\mathbf{x}|)}{|\mathbf{x}|}$  \\ 
\hline 
 $10^{-3}$ & $ 2*10^{-1}$ &$[0,0.05]$ & $[-2.1,2.1]\times[-2.1,2.1]$ &   $ 12e^{-2|\mathbf{x}|^2}$\\
\hline
\end{tabular}
\caption{\textmd{{Parameters to produce the solution data using the finite volume scheme.} }}
\label{t:2Dmetastable_params}
\end{table}

To estimate the interaction kernel as in Table \ref{t:2Dmetastable_params}, we use a basis of the form  $\{-2w\exp(-w|x|^2): w=1:1:10\}$. Then the true interaction kernel is $1$-sparse with respect to this particular basis representation.  Figure \ref{fig:2dmeta} (c) shows that it yields  a very coherent basis in our sparse learning problem. In this example, we use solution data defined on a coarse mesh as shown in  Figure \ref{fig:2dmeta} (a) and (b) corresponding to different times. When we set the $K=1$,  PartInv  failed to find the right support.  We then set $K=2$, and  run our support pruning algorithm  setting $\widehat \Delta t  =10^{-4}$ and $\widehat \Delta x = 10^{-1}$. We observed that under a variety of noise levels, our algorithm enables accurate estimation thanks to the support pruning step which consistently found the right support. See the results in Figure \ref{fig:2dmeta} (d). 
\begin{figure}[H]
\centering (a)
\begin{minipage}{.45\textwidth} \centering

\includegraphics[width= 0.9\textwidth,height=0.6\textwidth]{ 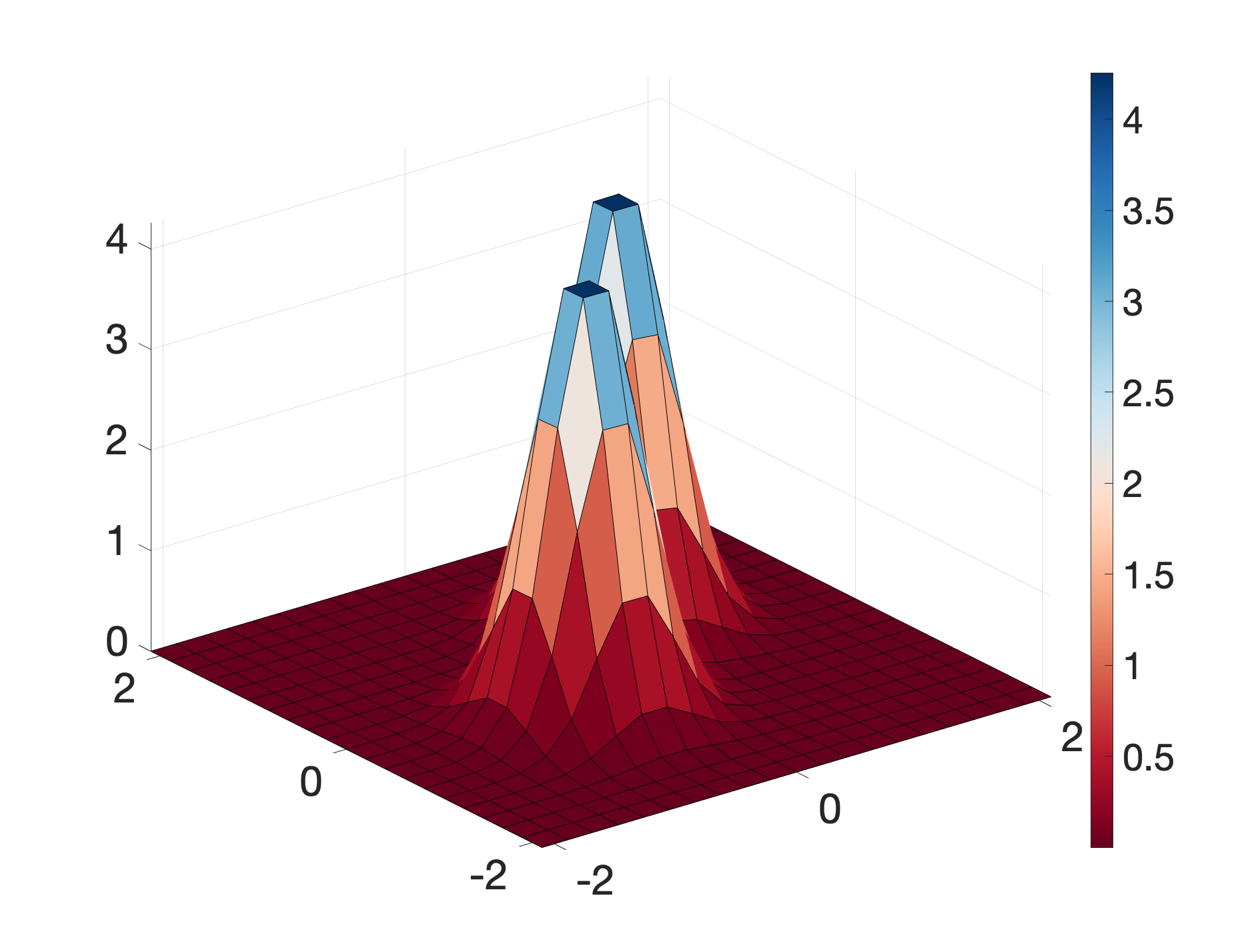}
\end{minipage} 
(b)\begin{minipage}{.45\textwidth}  \centering

\includegraphics[width=0.9\textwidth,height=0.6\textwidth]{ 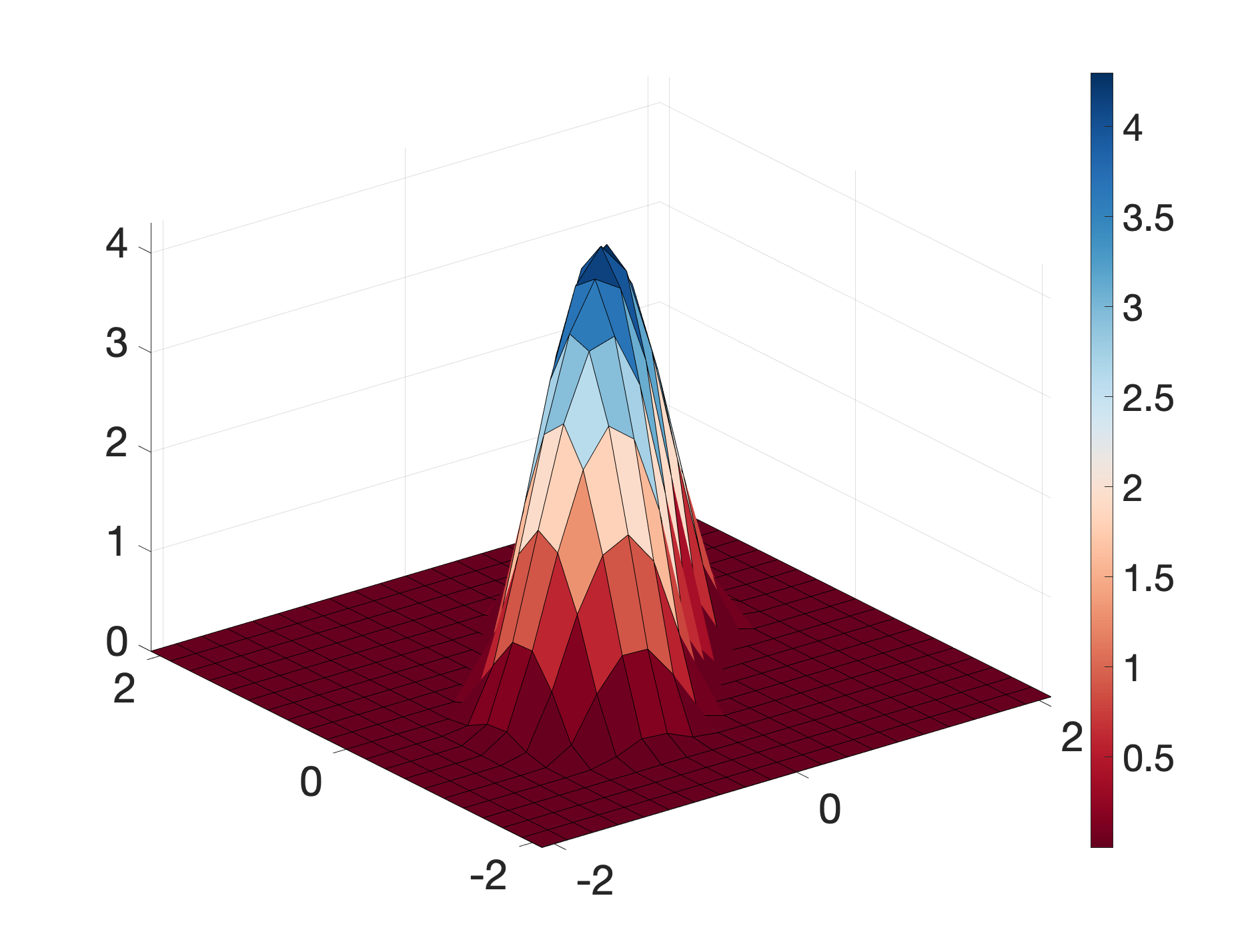}
\end{minipage}

(c)\begin{minipage}{.45\textwidth} \centering
\includegraphics[width=0.9\textwidth,height=0.6\textwidth]{ 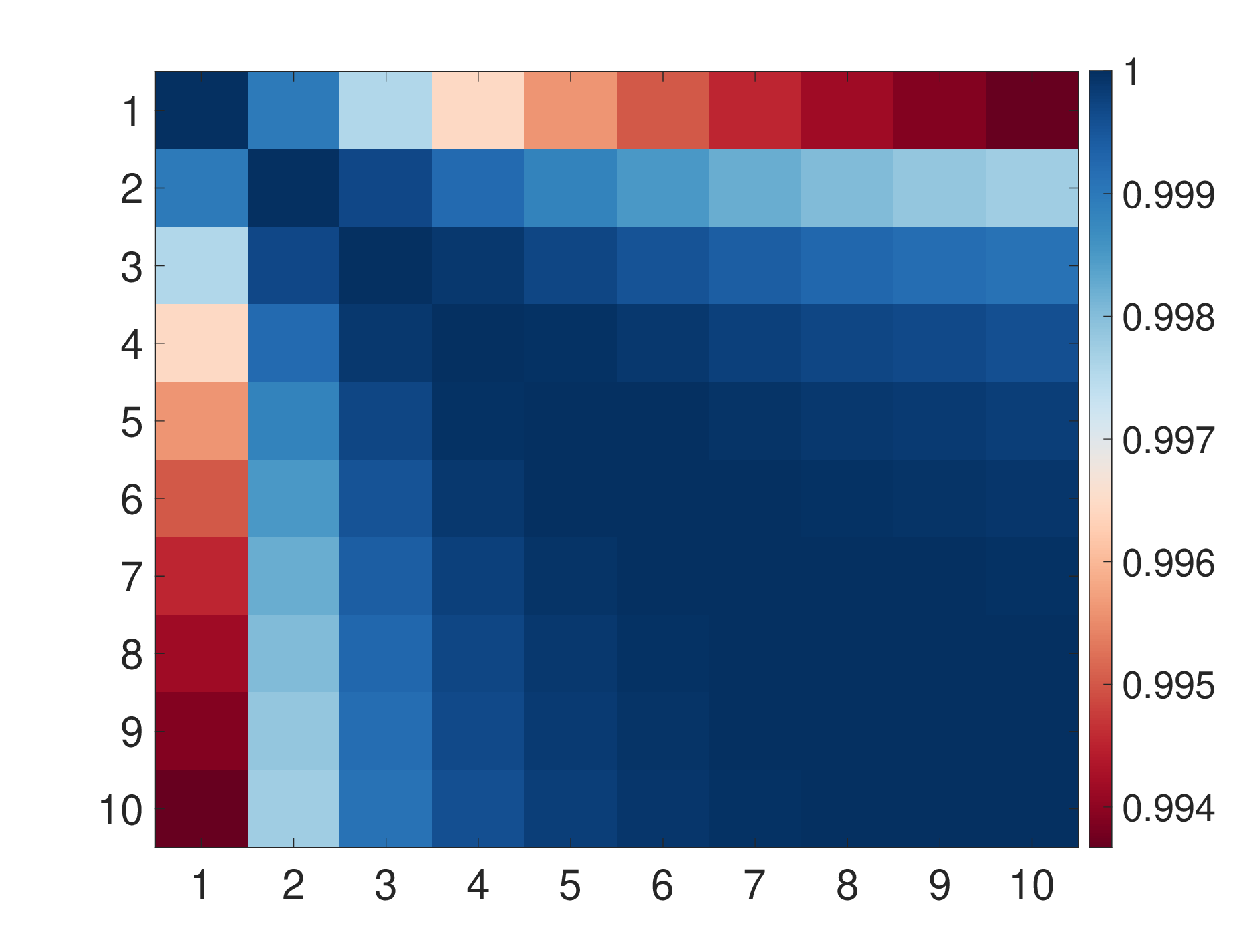}
\end{minipage}
(d)\begin{minipage}{.45\textwidth}  \centering
\includegraphics[width=0.81\textwidth,height=0.54\textwidth]{ 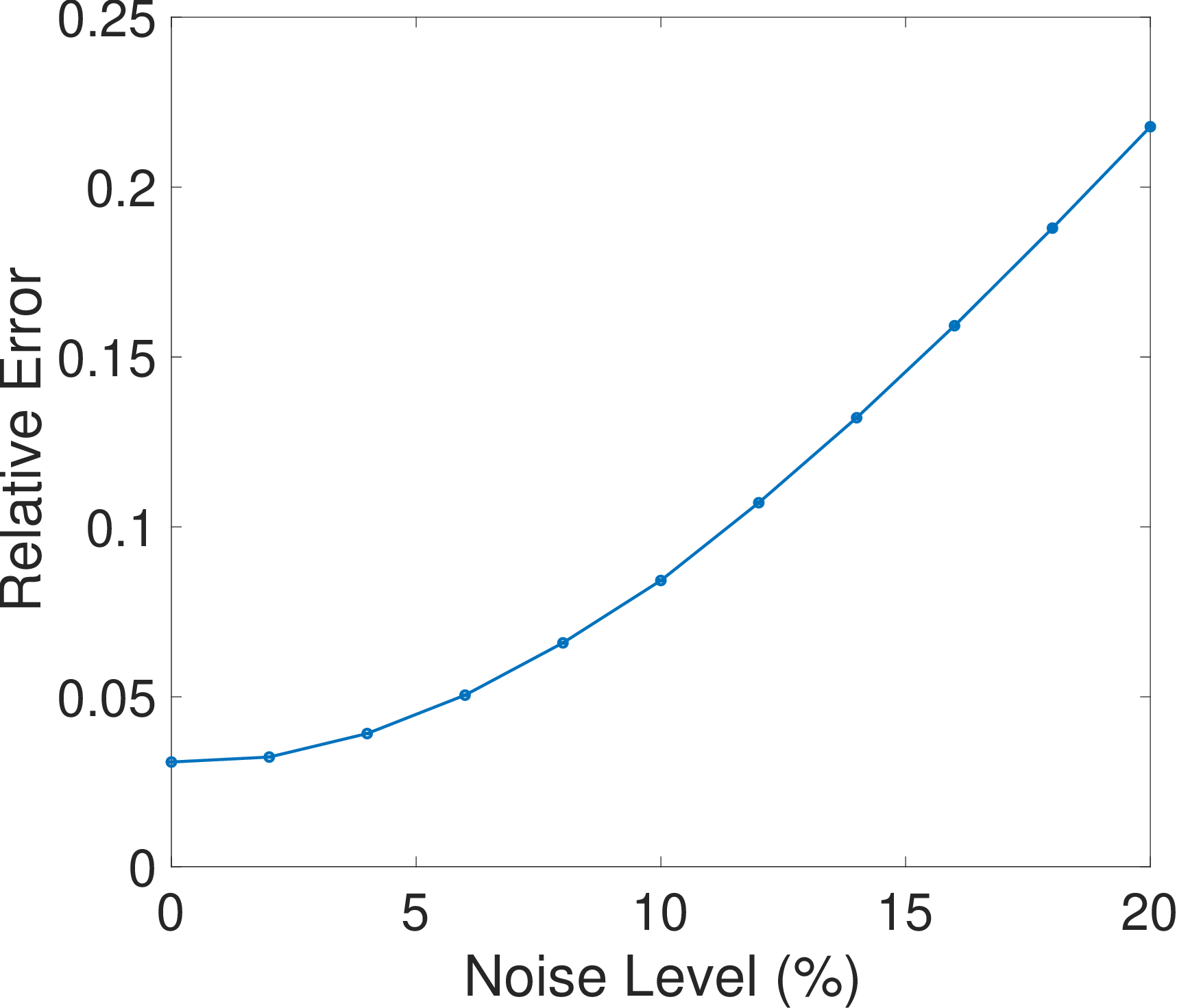}
\end{minipage}
\caption{ \textbf{Top Panel:} (a)-(b): the profile of the $t=0$ and $t=0.05$. \textbf{Bottom Panel:}  (c) The coherence pattern of the regression matrix $\mathbf{A}_{n,M,L}$. (d) We run PartInv with $K=2$ and used the support pruning algorithm for kernel estimation.  { For each noise level, we run 100 trials and display the relative reconstruction error bar  versus  noise levels but variances are very small in this set of experiments and therefore are not visible in the current scale}}.  \label{fig:2dmeta}
\end{figure}

\section{Conclusion and future work}\label{sec:conclusion_and_future_work}

In this study we present a new sparse identification algorithm designed to estimate the nonlocal interaction kernel within a broad spectrum of nonlocal gradient flow equations using noisy and discrete data. We establish new stability estimates that demonstrate the ability of our learned estimator to accurately reflect the training data. Additionally, we conduct an error analysis of our estimators and elucidate the dependency of their accuracy on factors such as the noise level and the mesh discretization. When compared to alternative sparse regression algorithms, our PartInv algorithm stands out for its simplicity in implementation and hyperparameter tuning while effectively addressing coherent regression matrices. It surpasses other methods like LASSO, subspace pursuit, and SINDy in performance. The main constraint is the prerequisite of selecting a suitable basis that ensures exact sparsity in the interaction kernel.

Future work will delve into the development of robust techniques tailored to solution data for aggregation diffusion equations.
This will include the exploration of advanced denoising techniques and regularization algorithms, such as sparse Bayesian methods, aimed at reducing the dependence of the prior knowledge on the estimated kernels.
Another direction of future work is to extend the current algorithm to cover systems with multiple interaction kernels, which models the heterogeneous interactions in multi-species systems.

\appendix
\section{Proofs of Dobrushin-type stability estimates} 
\subsection{Proof of Proposition \ref{prop:Dobrushin_}} \label{subsec:proof_stability_agg_eq}

\begin{proof}
Recalling standard results \cite{ambrosio2005gradient}, it is known that, given our assumptions on $W$, $\hatw, V$ and $\widehat{V}$,  the solutions of \eqref{eq:aggregation_eq} are of the form $\mu_t = \Phi_t \# \mu_0, \ \hatmu_t = \hatphi_t\#\hatmu_0$, where $\Phi_t$, $\hatphi_t$ are the flow maps induced by the velocity fields $\nabla W* \mu_t+\nabla V$ and $\nabla \hatw* \hatmu_t+\nabla \widehat{V}$, respectively. 
 Then we have the following estimate
\begin{align}\label{eq:W2_stability_aggregation}
d_2^2(\mu_t,\widehat{\mu}_t)& =  d^2_2( \Phi_t \#\mu_0,\widehat{\Phi}_t\#\widehat{\mu}_0)  \leq d^2_2( \Phi_t \#\mu_0,\widehat{\Phi}_t\#{{\mu}}_0) + d^2_2( \widehat{\Phi}_t \#{\mu}_0,\widehat{\Phi}_t\#\widehat{\mu}_0)\nonumber 
\\
& \leq     \int_{\R^d}|\Phi_t(\bx) - \widehat{\Phi}_t(\bx)|^2 \dd{\mu}_0(\bx)  + d^2_2( \widehat{\Phi}_t \#{\mu}_0,\widehat{\Phi}_t\#\widehat{\mu}_0)\ .
\end{align}
We can bound the final term above in the following way. Denote the product measure $\Pi_t := (\widehat{\Phi}_t \times \widehat{\Phi}_t) \# \Pi_0$, where $\Pi_0$ is the optimal transport plan between $\mu_0$ and $\widehat{\mu}_0$. Then, by definition of the 2-Wasserstein metric we have that 
\begin{align*}
   d^2_2( \widehat{\Phi}_t \#{\mu}_0,\widehat{\Phi}_t\#\widehat{\mu}_0) &\leq \int_{\R^d \times \R^d}|\bx-\by|^2 \dd\Pi_t = \int_{\R^d \times \R^d}| \widehat{\Phi}_t(\bx) - \widehat{\Phi}_t(\by)|^2 \dd \Pi_0
    \\
    & \leq e^{2(L_W+L_V)t}\int_{\R^d \times \R^d}|\bx-\by|^2 \dd\Pi_0
    \leq e^{2(L_W+L_V)t}d^2_2(\mu_0,\widehat{\mu}_0)\ .
\end{align*}
Where in the second line we used the Lipschitzness of the flow map $\widehat{\Phi}_t$. We then have the following estimate for the integrand in the first term of \eqref{eq:W2_stability_aggregation}
\begin{align*}
  |\Phi_t(\bx) - \widehat{\Phi}_t(\bx)|^2  \leq & \,  t \int_0^t |(\nabla W * \mu_s)(\Phi_s(\bx)) + \nabla V(\Phi_s(\bx)) 
  \\
  &   -(\nabla \hatw * \widehat{\mu}_s)(\widehat{\Phi}_s(\bx))- \nabla \widehat{V}(\widehat{\Phi}_s(\bx))|^2 \dd s\ .
  \end{align*}
  { After adding and subtracting $\nabla \hatw *\mu(\Phi(\bx))$ and $\nabla \widehat{V}(\Phi(\bx))$, we obtain}
  \begin{align*}
    |\Phi_t(\bx) - \widehat{\Phi}_t(\bx)|^2   \leq & \,4t \int_0^t |(\nabla W * {\mu}_s)(\Phi_s(\bx))- (\nabla {\hatw} * {\mu}_s)({\Phi}_s(\bx))|^2 \dd s 
  \\
 & +  4t \int_0^t |(\nabla \hatw * {\mu}_s)({\Phi}_s(\bx)) - (\nabla \hatw * \widehat{\mu}_s)(\widehat{\Phi}_s(\bx))|^2 \dd s
  \\
  &  +4t\int_0^t|\nabla V(\Phi_s(\bx)) -\nabla \widehat{V}({\Phi}_s(\bx))|^2\dd s
  \\
  &  + 4t \int_0^t|\nabla \widehat{V}({\Phi}_s(\bx)) - \nabla \widehat{V}(\widehat{\Phi}_s(\bx))|^2\dd s .
  \end{align*}
  { Adding and subtracting $\nabla \hatw * \hatmu(\Phi_s(\bx))$ and using the Lipschitzness of $\widehat{V}$, we deduce}
  \begin{align*}
  |\Phi_t(\bx) - \widehat{\Phi}_t(\bx)|^2  \leq  & \,4t \int_0^t \left|\int_{\R^d}\left[\nabla{W}({{\Phi_s}}(\bx)-\by) - \nabla \hatw({\Phi}_s(\bx)-\by)\right] \dd{\mu}_s(\by)\right|^2 \dd s
  \\
 &   + 8t \int_0^t \left|\int_{\R^d}\nabla  {\hatw}({\Phi}_s(\bx)-\by) \dd{\mu}_s(\by) - \nabla  {\hatw}({\Phi}_s(\bx)-\by)\dd{\widehat{\mu}_s(\by)}\right|^2 \dd s
 \\
 &  + 8t \int_0^t \left|\int_{\R^d}\nabla  {\hatw}({\Phi}_s(\bx)-\by)\dd{\widehat{\mu}}_s(\by) - \nabla  {\hatw}(\widehat{\Phi}_s(\bx)-\by)\dd{\widehat{\mu}_s(\by)}\right|^2 \dd s
 \\
 & +4t \int_0^t|\nabla V({\Phi}_s(\bx)) - \nabla \widehat{V}({\Phi}_s(\bx))|^2\dd s
 \\
 & +4tL^2_{\widehat{V}}\int_0^t|{\Phi}_s(\bx) - \widehat{\Phi}_s(\bx)|^2\dd s\ .
 \end{align*}
{ Rewriting these inequalities by factoring out certain terms, we infer that}
\begin{align*}
  \leq & \, 4t \int_0^t \left|\int_{\R^d}\left[\nabla{W}({{\Phi_s}}(\bx)-\by) - \nabla \hatw({\Phi}_s(\bx)-\by)\right] \dd{\mu}_s(\by)\right|^2 \dd s  
 \\
 & + 8t \int_0^t \int_{\R^d}\left|\nabla  {\hatw}({\Phi_s}(\bx)-\by)( \dd{\mu}_s(\by) - \dd \widehat{\mu}_s(\by))\right|^2 \dd s 
 \\
 &   +8t \int_0^t \left|\int_{\R^d}\nabla  {\hatw}({\Phi}_s(\bx)-\by)- \nabla  {\hatw}(\widehat{\Phi}_s(\bx)-\by)\dd{\widehat{\mu}_s(\by)}\right|^2 \dd s
  \\
 &  +4t \int_0^t|\nabla V({\Phi}_s(\bx)) - \nabla \widehat{V}({\Phi}_s(\bx))|^2\dd s
 \\
 &  +4tL^2_{\widehat{V}}\int_0^t|{\Phi}_s(\bx) - \widehat{\Phi}_s(\bx)|^2\dd s
 \\
 \leq & \,  4t \int_0^t \left|\int_{\R^d}\left[\nabla{W}({{\Phi_s}}(\bx)-\by) - \nabla \hatw({\Phi}_s(\bx)-\by)\right] \dd{\mu}_s(\by)\right|^2 \dd s  
 \\
 &  + 8t L^2_{\hatw} \int_0^t e^{2(L_W + L_V)s} d^2_2(\mu_s,\hatmu_s) \dd s 
 \\
 &  +8t L^2_{\hatw} \int_0^t \left|{\Phi}_s(\bx)- \widehat{\Phi}_s(\bx)\right|^2 \dd s
  \\
 &  +4t \int_0^t|\nabla V({\Phi}_s(\bx)) - \nabla \widehat{V}({\Phi}_s(\bx))|^2\dd s
 \\
 &  +4tL^2_{\widehat{V}}\int_0^t|{\Phi}_s(\bx) - \widehat{\Phi}_s(\bx)|^2\dd s\ ,
\end{align*}
{ where in the final inequality we used the Lipschitzness of $\nabla \hatw$ and $\Phi$ together with the definition by duality of the $1$-Wasserstein distance to obtain the second term; the third term also follows from the Lipschitzness of $\nabla \hatw$}. Then, integrating with respect to $\mu_0$ yields
\begin{align*}
    \int_{\R^d}|\Phi_t(\bx) - \widehat{\Phi}_t(\bx)|^2 \dd{\mu}_0(\bx)  \leq &\,  4t \int_0^t \int_{\R^d} \left|\int_{\R^d}\left[\nabla{W}({{\Phi_s}}(\bx)-\by) - \nabla \hatw({\Phi}_s(\bx)-\by)\right] \dd{\mu}_s(\by)\right|^2\dd \mu_0(\bx) \dd s
    \\
    &    + 8t L^2_{\hatw} \int_0^t e^{2(L_W + L_V)s} d^2_2(\mu_s,\hatmu_s) \dd s 
    \\ 
    &  +8t L^2_{\hatw} \int_0^t \int_{\R^d}\left|{\Phi}_s(\bx)- \widehat{\Phi}_s(\bx)\right|^2 \dd\mu_0(\bx)\dd s 
    \\
 &   +4t \int_0^t\int_{\R^d}|\nabla V({\Phi}_s(\bx)) - \nabla \widehat{V}({\Phi}_s(\bx))|^2\dd\mu_0(\bx)\dd s
 \\
 &  +4tL^2_{\widehat{V}}\int_0^t\int_{\R^d}|{\Phi}_s(\bx) - \widehat{\Phi}_s(\bx)|^2\dd\mu_0(\bx)\dd s
          \\
    = &\, 4t \int_0^t \int_{\R^d} \left|\int_{\R^d}\left[\nabla{W}(\bx-\by) - \nabla \hatw(\bx-\by)\right] \dd{\mu}_s(\by)\right|^2\dd \mu_s(\bx) \dd s
    \\
    &  + 8t L^2_{\hatw} \int_0^t e^{2(L_W + L_V)s} d^2_2(\mu_s,\hatmu_s) \dd s 
 \\
 &   +4t(2L^2_{\hatw}+L^2_{\widehat{V}})\int_0^t\int_{\R^d}|{\Phi}_s(\bx) - \widehat{\Phi}_s(\bx)|^2\dd \mu_0(\bx)\dd s \
\\
&   +4t \int_0^t\int_{\R^d}|\nabla V({\Phi}_s(\bx)) - \nabla \widehat{V}({\Phi}_s(\bx))|^2\dd\mu_0(\bx)\dd s.
\end{align*}
Then, an application of Gr\"onwall's inequality yields 
\begin{align*}
        \int_{\R^d}|\Phi_t(\bx) - \widehat{\Phi}_t(\bx)|^2 \dd\mu_0(\bx)  \leq & \,  \bigg(8t L^2_{\hatw} \int_0^t e^{2(L_W + L_V)s} d^2_2(\mu_s,\hatmu_s) \dd s
        \\
        & + 8t\int_0^t\|\nabla W*\mu_s - \nabla \hatw*\mu_s \|^2  _{L^2({\mu}_s)} \dd s
        \\
        &  + 4t\int_0^t \norm{\nabla V - \nabla \widehat{V}}^2_{L^2({\mu}_s)}\dd s \bigg)e^{2(2L^2_{\hatw}+L^2_{\widehat{V}})t^2}\ .
\end{align*}
Then going back to our original estimate in \eqref{eq:W2_stability_aggregation} we have after an additional application of Gr\"onwall's lemma that  
\begin{align*}
    d_2^2(\mu_t,\widehat{\mu}_t)\leq C_1\int_0^t\|\nabla W*\mu_s - \nabla \hatw*\mu_s \|^2  _{L^2({\mu}_s)}\dd s 
    + C_2\int_0^t \norm{\nabla V - \nabla \widehat{V}}^2_{L^2({\mu}_s)}\dd s+ C_3 d_2^2(\mu_0, \widehat{\mu}_0)
\end{align*}
where $C_1,C_2$ and $C_3$ are non-negative constants that depend on $T,L_W, L_V, L_{\widehat{W}}$ and $L_{\widehat{V}}$. Recalling the definition of the error functional $\mathcal{\tilde{E}}_\infty$ concludes the proof.
\end{proof}

\subsection{Assumptions and proof of Proposition \ref{prop:agg_diff_estimate}} \label{subsec:stability_estimate_agg_diff_eq}

Equation \eqref{eq:true_non_linear} can be interpreted as the evolution of the law 
of the solution of a stochastic differential equation (SDE) \cite{carmona2016lectures,chapter1991sznitman}. Namely, let $(\Omega, \mathcal{F}, (\mathcal{F}_t)_{t\in[0,T]}, \mathbb{P})$ be a filtered probability space and let $(B_t)_{t \in[0,T]}$ be an adapted Brownian motion in $\R^{d}$. Let us denote by 
\[
\mathbb{H}^{2, k}:=\left\{Z:[0,T]\times \R^d \to \R^{k}  \ |\ Z \text{ is progressively measurable, } \mathbb{E} \int_0^T\left|Z_s\right|^2 d s<\infty\right\} \ .
\]
Then, \eqref{eq:true_agg_diff} can be interpreted as the evolution of the law of the solution of the following SDE \cite{carmona2016lectures}
\begin{align}\label{eq:true_non_linear}
    d X_t &=\nabla W * \mu_t(X_t) d t + \sqrt{2}\sigma(K*\mu_t(X_t)) d B_t \ , \nonumber
    \\
    X_0 &= X^0 \in L^2 \text{ independent of } (B_t)_{t \in[0,T]}\ ,
\end{align}
where $\mu_t = \mathcal{L}(X_t)$ denotes the law of solution of \eqref{eq:true_non_linear} at time $t \in [0,T]$. We will need the following assumptions
\begin{assumption}\label{assumptions:aggregation-diffusion}
$\ $
\begin{enumerate}
    \item $W \in \W^{2,\infty}(\R^d)$, $\sigma:\R^n \to \R^{d}\times \R^d$ and $K:\R^d\to \R^n$ are Lipschitz and bounded. \label{ass:Lipschitz} 
    \item For any $\nu \in \P(\R^d)$, $(\nabla W *\nu (X_t))_{t \in [0,T]}\in\mathbb{H}^{2, d} $ and $(\sigma(K * \nu(X_t)))_{t \in [0,T]}\in\mathbb{H}^{2, d\times d}$. \label{ass:progressively_measurable}
\end{enumerate}
\end{assumption}

With these assumptions we are now ready to prove Proposition \ref{prop:agg_diff_estimate}. We note that for ease of notation, throughout the proof we use subscripts to denote the time argument of functions that depend on time. 
\begin{proof}
We begin by noting that by the definition of the 2-Wasserstein distance we have 
\begin{equation}\label{eq:agg_diff_first_estimate}
   d^2_2(\mu_t,\widehat{\mu}_t) \leq \mathbb{E} \sup_{s \in[0, t]} \left|X_s-\widehat{X}_s\right|^2\ . 
\end{equation}
Next, we consider the following estimate
\begin{align*}
 \mathbb{E} \sup_{s \in[0, t]} \left|X_s-\widehat{X}_s\right|^2= \, & 3\E\sup_{s \in[0, t]} \Bigg(\left| {X_0}-\widehat{X_0}\right|^2 
\\
& + \left|\int_0^s\left(\nabla W*\mu_r (X_r)-\nabla \widehat{W} * \widehat{\mu}_r(\widehat{X_r})\right) d r\right|^2 
\\
& + \left|\int_0^s\sigma(K * {\mu}_r({X_r}))-\widehat{\sigma}( \widehat{K}* \widehat{\mu}_r(\widehat{X}_r)) d {B}_r\right|^2\Bigg)
\\ 
& =: I+II+III
\end{align*}
We then have
\begin{align*}
    II\leq & \,  3  \mathbb{E}\sup_{s \in [0,t]} \ s \int_0^s\left|\nabla W* \mu_r(X_r)-\nabla \widehat{W} * \widehat{\mu}_r(\widehat{X_r})\right|^2  \dd r
    \\
     \leq &\, 9T \mathbb{E}\sup_{s \in [0,t]}\bigg(\int_0^s\left|\nabla W * \mu_r(X_r)-\nabla {\hatw} * {\mu}_r(X_r)\right|^2   \dd r
    \\
    &  +   \int_0^s\left|\nabla {\hatw} * {\mu}_r(X_r)-\nabla {\hatw} * \widehat{\mu}_r({X_r})\right|^2   \dd r
    \\
    &  +\int_0^s\left|\nabla {\hatw} * \widehat{\mu}_r({X_r})-\nabla \widehat{W} * \widehat{\mu}_r(\widehat{X_r})\right|^2  \dd r\bigg)
    \\
    \leq &\, 9T \int_0^t \|\nabla W*\mu_s - \nabla \widehat{W}*\mu_s\|^2_{L^2({\mu}_s)}\dd s 
    \\
    &  + 9T L_{\hatw}^2 \int_0^td_2^2(\mu_s,\widehat{\mu}_s)\dd s + 9TL_{\hatw}^2  \int_0^t \mathbb{E} \sup_{r \in [0,s]}|X_r - \widehat{X_r}|^2\dd s\ .
\end{align*}
For the third term, using the Burkholder-Davis-Gundy inequality we have
\begin{align*}
    III  = \, & 6 \E  \sup_{s \in[0,t]} \left| \int_0^s \sigma(K*{\mu}_r({X}_r)) - \widehat{\sigma}( \widehat{K}*{\widehat{\mu}}_r(\widehat{X}_r)) \dd B_r\right|^2 
    \\
     \leq &\, 6 C T\E\int_0^t\  \left| \sigma(K*{\mu}_s({X}_s)) - \widehat{\sigma}( \widehat{K}*{\widehat{\mu}}_s(\widehat{X}_s)) \right|^2\dd s
    \\
      \leq  & \,24 C T\E\int_0^t\  \left| \sigma(K*{\mu}_s({X}_s)) - {\sigma}( \widehat{K}*{{\mu}}_s({X_s})) \right|^2\dd s 
    \\
    &  + 24 C T\E\int_0^t\  \left| \sigma(\widehat{K}*{\mu}_s({X}_s)) - {\sigma}( \widehat{K}*{\widehat{\mu}}_s({X}_s)) \right|^2\dd s   \\
    &  + 24 C T\E\int_0^t\  \left| \sigma(\widehat{K}*\widehat{\mu}_s({X}_s)) - {\sigma}( \widehat{K}*{\widehat{\mu}}_s(\widehat{X}_s)) \right|^2\dd s
    \\
    &  + 24 C T\E\int_0^t\  \left| {\sigma}(\widehat{K}*\widehat{\mu}_s(\widehat{X}_s)) - \widehat{\sigma}( \widehat{K}*{\widehat{\mu}}_s(\widehat{X_s})) \right|^2\dd s 
    \\
     =: \ & (i) + (ii) + (iii) + (iv)   \ .
\end{align*}
Next, we have the following estimates for $(i), (ii), (iii)$ and $(iv)$.
\begin{align*}
 (i) &\leq  24 C T L^2_{{\sigma}} \int_0^t \norm{K*\mu_s - \widehat{K}*\mu_s}^2_{L^2({\mu}_s)} \dd s \ . 
\end{align*}
For $(ii)$ we have
\begin{align*}
(ii) &\leq   24 C T L^2_\sigma \E \int_0^t\left|\int_{\R^d}\widehat{K}(X_s-\by)\dd \mu(\by) -\int_{\R^d}\widehat{K}(X_s-\mathbf{z}) \dd \widehat{\mu}(\mathbf{z}) \right|^2 \dd s
 \\
 & \leq 24 C T L^2_\sigma L_{\widehat{K}}^2\int_0^t d^2_2(\mu_s,\widehat{\mu}_s) \dd s \ . 
   \end{align*}
Term $(iii)$ can be bounded by 
\begin{align*}
 (iii) \leq 24 C T L_\sigma^2 L_{\widehat{K}}^2  \int_0^t \sup_{r \in [0,s]} \E| X_r - \widehat{X}_r|^2 \dd s    \ ,
\end{align*} 
and finally, we have the following bound for term $(iv)$ 
\begin{align*}
    (iv) \leq 24  C T^2 \norm{\sigma - \widehat{\sigma}}^2_\infty .
\end{align*}
Putting all the previous estimates together we get 
\begin{align*}
\mathbb{E} \sup_{s \in[0, t]}\left|X_s-\widehat{{{X}}}_s\right|^2 \leq \, & 3 \mathbb{E}\left|X_0-\widehat{X}_0\right|^2
 + 9T \int_0^t \|\nabla W*\mu_s - \nabla \widehat{W}*\mu_s\|^2_{L^2({\mu}_s)}\dd s 
    \\
&  + 9T L_{\hatw}^2 \int_0^td_2^2(\mu_s,\widehat{\mu}_s)\dd s 
 +9TL_{\hatw}^2  \int_0^t \mathbb{E} \sup_{r \in [0,s]}|X_r - \widehat{X_r}|^2\dd s
\\
&  +24 C T L^2_{{\sigma}} \int_0^t \norm{K*\mu_s - \widehat{K}*\mu_s}^2_{L^2({\mu}_s)} \dd s
\\
&  + 24 C T L^2_\sigma L_{\widehat{K}}^2\int_0^t d^2_2(\mu_s,\widehat{\mu}_s) \dd s + 24  C T^2 \norm{\sigma - \widehat{\sigma}}^2_\infty
\\
&  +24 C T L_\sigma^2 L_{\widehat{K}}^2  \int_0^t \sup_{r \in [0,s]} \E| X_r - \widehat{X}_r|^2 \dd s \,. 
\end{align*}
A first application of Gr\"onwall's inequality then yields
\begin{align*}
    \mathbb{E} \sup_{s \in[0, t]}\left|X_s-\widehat{{{X}}}_s\right|^2 \leq &\,\bigg(3 \mathbb{E}\left|X_0-\widehat{X}_0\right|^2+\left(9L_{\hatw}^2 T +T24 CL_\sigma^2 L^2_{\widehat{K}}\right) \int_0^t  d_2^2\left(\widehat{\mu}_s, \mu_s\right) \dd s
    \\
    & +9 T \int_0^t\|\nabla W*\mu_s-\nabla \widehat{W}*\mu_s\|^2_{L^2({\mu}_s)} \dd s +24  CT^2\norm{\sigma - \widehat{\sigma}}^2_\infty
    \\
    & + 24  C T L^2_{{\sigma}} \int_0^t \norm{K*\mu_s - \widehat{K}*\mu_s}^2_{L^2({\mu}_s)} \dd s\bigg)\exp\left(12 CL_\sigma^2L^2_{\widehat{K}}T^2 +\frac{9}{2}T^2L^2_{\hatw}\right).
\end{align*}
Then, using \eqref{eq:agg_diff_first_estimate} and a final application Gr\"onwall's inequality yields
\begin{align*}
    d^2_2(\mu_t,\widehat{\mu}_t) \leq &\,C(T,L_\sigma,L_{\widehat{K}},L_{\hatw})\bigg(3 d_2^2(\mu_0,\widehat{\mu}_0) + 9 T \int_0^t\|\nabla W*\mu_s-\nabla \widehat{W}*\mu_s\|^2_{L^2({\mu}_s)} \dd s 
    \\
    & +24  CT^2\norm{\sigma - \widehat{\sigma}}^2_\infty + 24  C T L^2_{{\sigma}} \int_0^t \norm{K*\mu_s - \widehat{K}*\mu_s}^2_{L^2({\mu}_s)} \dd s\bigg),
\end{align*}
where $C(T,L_\sigma,L_{\widehat{K}},L_{\hatw})>0$ is a constant depending on the Lipschitz coefficients of $\sigma, 
\widehat{K}$ and $
\hatw$, and $T$. Using the definition of the error functional $\mathcal{\tilde{E}}_\infty$ concludes the proof.
\end{proof}
\subsubsection{Assumptions and proof of Proposition \ref{prop:non_linear_stability}}\label{subsec:proof_stability_non_linear_diff}
Let us denote by $\Pi(\sigma,\nu) \in \P(\R^d\times \R^d)$ the set of transport plans between $\sigma \in \P(\R^d)$ and $\nu \in \P(\R^d)$ for the quadratic cost function. As it will become clear later on, we will need the following estimate on the energy $\mathcal{H}(\rho):= \int_{\R^d} H(\rho(\bx))\dd \bx$.
\begin{lemma}\label{lemma:internal_energy_estimate}
Let $H:[0,+\infty] \to \R$ be the internal energy density given by $H(z)=\kappa\frac{z^m}{m-1}$ where $m\neq 1$, $m\geq 1- \frac{1}{d}$ and $m > \frac{d}{d+2}$. Let $\rho,\widetilde{\rho}$ be two smooth solutions to \eqref{eq:non_linear_diffusion} and let $\gamma_0 \in \Pi(\rho_0,\widetilde{\rho}_0)$. Then, we have the following estimate 
\begin{equation}
     \int_{\RRd} (\nabla H'(\widetilde{\rho}(0,\by)) - \nabla H'(\rho(0,\bx))) \cdot (\by - \bx)\gamma_0(\dd \bx, \dd \by) \geq 0 \ .
\end{equation}
\end{lemma}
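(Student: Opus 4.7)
The inequality is the Wasserstein analog of the monotonicity of the subdifferential of the internal energy functional $\mathcal{H}(\rho) = \int_{\R^d} H(\rho)\,d\bx$. My plan is to derive it from McCann's displacement convexity of $\mathcal{H}$ along the geodesic connecting $\rho_0$ and $\widetilde{\rho}_0$ in $(\P^2(\R^d), d_2)$. Specifically, since $\rho_0$ is absolutely continuous (both solutions being smooth), Brenier's theorem furnishes a unique optimal map $T:\R^d\to\R^d$ with $\widetilde{\rho}_0 = T\#\rho_0$, and the optimal plan is $\gamma_0 = (\mathrm{Id}\times T)\#\rho_0$; any other transport plan can be handled by the standard optimality argument in the proof of Proposition~\ref{prop:non_linear_stability}, so I restrict to this one.

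The core step is the following. Set $T_s := (1-s)\mathrm{Id} + sT$, $\rho_s := T_s\#\rho_0$, and define $g(s) := \mathcal{H}(\rho_s)$ for $s \in [0,1]$. For $H(z) = \kappa z^m/(m-1)$, the McCann function $t\mapsto t^d H(t^{-d}) = \kappa\, t^{d(1-m)}/(m-1)$ is convex and non-increasing on $(0,\infty)$ precisely when $m \geq 1 - 1/d$, so McCann's theorem yields that $g$ is convex on $[0,1]$. The displacement interpolation satisfies the continuity equation $\partial_s \rho_s + \nabla\cdot(\rho_s v_s) = 0$ with $v_s(T_s(\bx)) = T(\bx) - \bx$, and a direct computation gives
$$g'(s) = \int_{\R^d} \nabla H'(\rho_s(\by))\cdot v_s(\by)\,\rho_s(\by)\,d\by.$$
Evaluating at $s=0$ (where $T_s = \mathrm{Id}$) and using $\gamma_0 = (\mathrm{Id}\times T)\#\rho_0$,
$$g'(0) = \int_{\R^d} \nabla H'(\rho_0(\bx))\cdot (T(\bx) - \bx)\,\rho_0(\bx)\,d\bx = \int_{\RRd} \nabla H'(\rho_0(\bx))\cdot(\by - \bx)\,\gamma_0(d\bx,d\by).$$
Evaluating at $s=1$ (where $T_s = T$) and changing variables via the Monge--Ampère identity $\widetilde{\rho}_0(T(\bx))\det\nabla T(\bx) = \rho_0(\bx)$,
$$g'(1) = \int_{\R^d} \nabla H'(\widetilde{\rho}_0(\by))\cdot (\by - T^{-1}(\by))\,\widetilde{\rho}_0(\by)\,d\by = \int_{\RRd} \nabla H'(\widetilde{\rho}_0(\by))\cdot(\by - \bx)\,\gamma_0(d\bx,d\by).$$
The convexity bound $g'(1) \geq g'(0)$ is exactly the inequality claimed in the lemma.

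The main obstacle is the rigorous justification of the differentiation $g'(s)$ and the applicability of McCann's theorem under the regularity available here. The assumption that $\rho, \widetilde{\rho}$ are smooth (and, as noted in the remark following Proposition~\ref{prop:non_linear_stability}, bounded away from $0$ in $\Omega$) ensures that $\nabla H'(\rho_0), \nabla H'(\widetilde{\rho}_0)$ and $T$ are regular enough — by Caffarelli's regularity theory for the Monge--Ampère equation with smooth strictly positive densities — to carry out the differentiation under the integral sign and the change of variables without approximation arguments. The condition $m > d/(d+2)$ guarantees $\mathcal{H}(\rho_s) < \infty$ along the interpolation for measures with finite second moment, so that $g$ takes finite values on $[0,1]$, while $m \neq 1$ is required only to write $H$ in the stated form; the decisive hypothesis is $m \geq 1 - 1/d$, which is what makes $g$ convex.
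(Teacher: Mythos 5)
Your proof is correct and follows essentially the same approach as the paper: both invoke McCann's displacement convexity of $\mathcal{H}$ along the Wasserstein geodesic between $\rho_0$ and $\widetilde{\rho}_0$, and the inequality you derive as $g'(1)\geq g'(0)$ is exactly what the paper obtains by adding the two above-tangent inequalities (eqs.~\eqref{eq:energy_sym_1}--\eqref{eq:energy_sym_2}), which is the standard argument that the derivative of a convex function is monotone. One small caveat: you claim arbitrary plans $\gamma_0\in\Pi(\rho_0,\widetilde{\rho}_0)$ are handled "by the standard optimality argument,'' but neither your argument nor the paper's actually establishes the inequality for non-optimal plans — both rely on $\gamma_0$ being the optimal plan (as is, in fact, what Proposition~\ref{prop:non_linear_stability} uses).
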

\begin{proof}
    Following \cite[Section 5.2]{otto2001geometry} (see also \cite[Proposition 3.38]{ambrosio2013user}), the functional $\mathcal{H}(\rho)$ is displacement convex. As a consequence, we deduce the following estimate for the difference of the internal energy for two smooth solutions $\rho, \widetilde{\rho}$ of \eqref{eq:non_linear_diffusion} 
    \begin{align}\label{eq:energy_sym_1}
  \mathcal{H}(\widetilde{\rho_0}) - \mathcal{H}({\rho}_0) \geq \int_{\RRd} \nabla H'(\rho(0,\bx))\cdot(\by-\bx)\gamma_0(\dd \bx, \dd \by) \ , 
\end{align}
where $\gamma_0$ is a transference plan between ${\rho}$ and $\widetilde{\rho}$. Notice this is nothing else than the characterization of convexity by supporting hyperplanes. By symmetry we also have 
\begin{equation}\label{eq:energy_sym_2}
        \mathcal{H}(\rho_0) - \mathcal{H}(\widetilde{\rho}_0) \geq - \int_{\RRd} \nabla H'(\widetilde{\rho}(0,\by))\cdot(\by-\bx)\gamma_0(\dd \bx, \dd \by) \ .
\end{equation}
Adding \eqref{eq:energy_sym_1} and \eqref{eq:energy_sym_2} yields 
\begin{equation*}\label{eq:internal_energy_estimate}
    \int_{\RRd} (\nabla H'(\widetilde{\rho}(0,\by))- \nabla H'(\rho(0,\bx))\cdot(\by-\bx) \gamma_0(\dd \bx, \dd \by) \geq 0 \ .
\end{equation*}
\end{proof}

Furthermore, we note that by our assumption, $\rho, \hatrho$ are smooth solutions to continuity equations and hence, by \cite[Proposition 8.1.8]{ambrosio2005gradient} they admit the representations $\rho_\tau = \Phi_\tau\#\rho_0$ and $\hatrho_\tau = \hatphi_\tau\#\hatrho_0$, where $\Phi$ is the flow map associated to the problem  
\begin{align*}
    \frac{d}{d\tau}r(\tau,x) &= v(\tau,r),
    \\
    r(0) & = x \in \R^d,
\end{align*}
and $\hatphi$ is the flow map for the analogous problem with the velocity field $\widehat{v}(\hatrho)= -\nabla(  H'(\hatrho) +\hatw*\hatrho +  \widehat{V})$. Let $\gamma_0\in \Pi(\rho_{0},\hatrho_{0})$  be an optimal transport plan between $\rho_{0}$ and $\hatrho_{0}$ and note that, for $\tau \in (0,T]$ by the representations of $\rho_\tau$ and $\hatrho_\tau$, we have that $\gamma_\tau = (\Phi_\tau \times \hatphi_\tau)\#\gamma_0$ is an admissible transport plan between $\rho_\tau,\hatrho_\tau$. 
Now we are ready to present the proof of Proposition \ref{prop:non_linear_stability}.
\begin{proof}
Closely following arguments similar to \cite[Section 5.2]{otto2001geometry} (see also \cite[Corollary 5.2.5]{santambrogio2015optimal}), we have that by the representation of the solution $\rho_t,\hatrho_t$ in terms of the associated flows $\Phi_t,\hatphi_t$ we can obtain the following estimate 
\begin{align*}
\frac{1}{t}\left(d_2^2(\rho_t, \hatrho_t)-d_2^2\left(\rho_0, \hatrho_0\right)\right)
& \leq \frac{1}{t}\left(\int_{\RRd}\left|\by-\bx\right|^2 \dd\gamma_t\left( \bx,  \by\right)-\int_{\RRd}\left|\by-\bx\right|^2 \dd \gamma_0\left( \bx,  \by\right)\right) \\
& =\int_{\RRd} \frac{1}{t}\left(\left|\hatphi\left(t, \by\right)-\Phi\left(t, \bx\right)\right|^2-\left|\by-\bx\right|^2\right) \dd\gamma_0\left(\bx,  \by\right)\,.
\end{align*}
Then, letting $t\to 0^+$ and by the definition of the flow maps $\Phi, \hatphi$ we obtain
\begin{align*}\label{eq:non_linear_initial_estimate}
 \frac{d^{+}}{d \tau}\bigg\rvert_{\tau=0} d_2^2\left(\rho_t, \hatrho_t\right)\leq 2 \int_{\R^d \times \R^d}\left(\widehat{v}\left(0, \by\right)- v\left(0, \bx\right)\right) \cdot\left(\by-\bx\right) \dd\gamma_0\left(\bx, \by \right), 
\end{align*}
Using our estimate from Lemma \ref{lemma:internal_energy_estimate} and integrating in time yields
\begin{align*}
    d^2_2(\rho_t,\hatrho_t) & \leq d^2_2(\rho_0, \hatrho_0) +  2 \int_0^t\int_{\RRd} (\widehat{v}(\widehat{\rho})(s,\by) - {v}(\rho)(s,\bx)) \cdot (\by-\bx) \dd\gamma_s( \bx,\by)\dd s
    \\
    & \leq  -  2 \int_0^t\int_{\RRd}(\nabla \widehat{V}(\by)- \nabla V(\bx))\cdot(\by-\bx) \dd\gamma_s( \bx,\by)\dd s
    \\
    & \quad -  2 \int_0^t\int_{\RRd}(\nabla \hatw*\hatrho_{s}(\by)- \nabla W*\rho_{s}(\bx))\cdot(\by-\bx) \dd\gamma_s( \bx, \by) \dd s
    \\
    & \quad + d^2_2(\rho_0, \hatrho_0)\ .
\end{align*}
Taking absolute value on both sides of the previous estimate and an application of Young's inequality yields
\begin{align*}
    d^2_2(\rho_t,\hatrho_t)& \leq \int_0^t\int_{\RRd}|\nabla \widehat{V}(\by)- \nabla V(\bx)|^2\dd\gamma_s( \bx, \by) + \int_{\RRd}|\by-\bx|^2 \dd\gamma_s( \bx, \by) \dd s
    \\
    & \quad + \int_0^t\int_{\RRd}|(\nabla \hatw*\hatrho_{s}(\by)- \nabla W*\rho_{s}(\bx))|^2\dd\gamma_s( \bx, \by) +  \int_{\RRd}|\by-\bx|^2\dd\gamma_s( \bx, \by)\dd s 
    \\
    & \quad + d^2_2(\rho_0, \hatrho_0)
    \\
    & \leq \int_0^t\int_{\RRd}|\nabla \widehat{V}(\by)- \nabla V(\bx)|^2\dd\gamma_s( \bx, \by) \dd s
    \\
    &\quad + \int_0^t\int_{\RRd}|(\nabla \hatw*\hatrho_{s}(\by)- \nabla W*\rho_{s}(\bx))|^2\dd\gamma_s( \bx, \by)\dd s
    \\
    & \quad + 2 \int_0^t\int_{\RRd}|\by-\bx|^2\dd\gamma_s( \bx, \by)\dd s  + d^2_2(\rho_0, \hatrho_0)\ .
\end{align*}
With similar calculations to the ones in Proposition \ref{prop:Dobrushin_} we have that 
\begin{align*}
|\nabla \widehat{V}(\by)- \nabla V(\bx)|^2 &\leq 2|\nabla \widehat{V}(\by)- \nabla \widehat{V}(\bx)|^2 + 2 |\nabla \widehat{V}(\bx)- \nabla V(\bx)|^2
\\
&\leq 2L^2_{\widehat{V}}|\by-\bx|^2 + 2 |\nabla \widehat{V}(\bx)- \nabla V(\bx)|^2\ ,
\end{align*}
and integrating with respect  to $\gamma_s$ yields 
\[
\int_{\RRd}|\nabla \widehat{V}(\by)- \nabla V(\bx)|^2\dd\gamma_s( \bx, \by) \leq 2L^2_{\widehat{V}}\int_{\RRd}|\by-\bx|^2\dd\gamma_s( \bx,\by)  +  2 \norm{\nabla V - \nabla \widehat{V}}^2_{L^2(\rho_{s})}\ .
\]
Similarly, for the term involving the interaction potential we have
\begin{align*}
    |\nabla \hatw*\hatrho_{s}(\by)- \nabla W*\rho_{s}(\bx)|^2 &\leq 2|\nabla \hatw * \hatrho_{s}(\by) - \nabla \hatw * \rho_{s}(\bx)|^2 + 2 | \nabla \hatw*\rho_{s}(\bx) - \nabla W * \rho_{s}(\bx)|^2
    \\
    & \leq 2 L^2_{\hatw} |\by-\bx|^2 + 2 | \nabla \hatw*\rho_{s}(\bx) - \nabla W * \rho_{s}(\bx)|^2  \ .
\end{align*}
Integrating with respect to $\gamma_s$ then yields 
\begin{align*}
    \int_{\RRd}|\nabla \hatw*\hatrho_{s}(\by)- \nabla W*\rho_{s}(\bx)|^2 \dd\gamma_s( \bx, \by) &\leq 2L^2_{\hatw} \int_{\RRd}|\by-\bx|^2\dd\gamma_s( \bx, \by)
    \\
    & \qquad+ 2\norm{\nabla \hatw * \rho_s- \nabla W*\rho_s}_{L^2(\rho_{s})}  \ .
\end{align*}
Hence, all in all we have 
\begin{align*}
    d^2_2(\hatrho_t,\rho_t) & \leq d^2_2(\hatrho_0,\rho_0)+  2\int_0^t(1 +L^2_{\widehat{V}} +L^2_{\widehat{W}}) \int_{\RRd}|\by-\bx|^2\dd\gamma_s( \bx, \by)\dd s
    \\
    & \qquad + 2 T\mathcal{\tilde{E}}_\infty(\hatw) +  2\int_0^t \norm{\nabla V - \nabla \widehat{V}}^2_{L^2(\rho_{s})} \dd s \ . 
\end{align*}
Taking the infimum with respect to $\gamma_s \in \Pi(\hatrho_s,\rho_s)$ in the previous equation gives
\begin{align*}
    d^2_2(\hatrho_t,\rho_t) & \leq d^2_2(\hatrho_0,\rho_0)+  2\int_0^t(1 +L^2_{\widehat{V}} +L^2_{\widehat{W}}) d^2_2(\hatrho_s,\rho_s)\dd s
    \\
    & \qquad + 2 T\mathcal{\tilde{E}}_\infty( \hatw) +  2\int_0^t \norm{\nabla V - \nabla \widehat{V}}^2_{L^2(\rho_{s})} \dd s \ . 
\end{align*}
An application of Gr\"onwall's inequality then yields 
\begin{align*}
    d^2_2(\rho_t,\hatrho_t) \leq &\exp\{2(1 +L^2_{\widehat{V}} +L^2_{\widehat{W}}) t\}
    \\
   & \qquad \times \left(d^2_2(\hatrho_0,\rho_0)+ 2 T\mathcal{\tilde{E}}_\infty(\hatw) +  2\int_0^t \norm{\nabla V - \nabla \widehat{V}}^2_{L^2(\rho_{s})} \dd s\right) \ ,
\end{align*}
which concludes the proof. 
\end{proof}

 \section{Error estimate with noisy data}\label{app: error estimate} 

\paragraph{Proof of Proposition \ref{prop: error A and b noise}}
\begin{proof}
 We need to estimate $\mathcal{B}:=\mathbf{{A}}_{n,M,L}(i,j)-\mathbf{\widetilde{A}}_{n,M,L}(i,j)$. Note that $\mathcal{B}$ does not depend on the indices $(i,j) \in \{1,\ldots,n\}^2$ as our final estimate is independent of the indices considered. 
Then we have 
\begin{align*}
    \mathcal{B} = \frac{1}{T}\sum_{\ell=1,m=-M}^{L,M}[ (C_{n,M,L}^{i}\cdot C_{n,M,L}^{j})_m^{\ell}\rho_m^{\ell} - (\tilde{C}^i_{n,M,L}\cdot \tilde{C}^j_{n,M,L})^\ell_m\tilde{\rho}^\ell_m ]\Delta x \Delta t\,.
\end{align*}
Define $\mathcal{C}^i_m(g^{\ell}) = \sum_{k=-M}^M  (\nabla \Psi_i)_{m-k}g(t_\ell,x_k) \Delta x$ for any function $g$ defined on the mesh. Then we can write $\mathcal{B}$ using this notation to emphasize the dependence of $\tilde{C}^i_{n,M,L}$ on the noise as follows
\begin{align*}
    \mathcal{B} = \frac{1}{T}\sum_{\ell=1,m=-M}^{L,M}[  (C_{n,M,L}^{i}\cdot C_{n,M,L}^{j})_m^{\ell}\rho_m^{\ell} - \mathcal{C}^i_m({\rho}^\ell+\varepsilon^{\ell})\mathcal{C}^j_m({\rho}^\ell+\varepsilon^{\ell})({\rho}^\ell_m+\varepsilon^\ell_m)]\Delta x \Delta t\,.
\end{align*}
By linearity of $\mathcal{C}^i_m$, expanding the second term yields 
\begin{align*}
    |\mathcal{B}| = |\Lambda^{i,j}|\ ,
\end{align*}
where $\Lambda^{i,j}$ is composed of the terms in $\mathcal{B}$ that depend on the noise and is given by
\begin{align*}
    \Lambda^{i,j} =&  \  \frac{1}{T} \sum_{v=1}^7  \Lambda^{i,j}_v
    \\
    =& \ \frac{1}{T}\sum_{\ell=1,m=-M}^{L,M} \bigg(\mathcal{C}^i_m(\varepsilon^\ell)\mathcal{C}^j_m({\rho}^\ell){\rho}^\ell_m\nonumber + \mathcal{C}^i_m({\rho}^\ell)\mathcal{C}^j_m({\rho}^\ell)\varepsilon^\ell_m\nonumber
    \\
    & + \mathcal{C}^i_m(\varepsilon^\ell)\mathcal{C}^j_m(\varepsilon^\ell){\rho}^\ell_m + \mathcal{C}^i_m({\rho}^\ell)\mathcal{C}^j_m(\varepsilon^\ell){\rho}^\ell_m\nonumber
    \\
    & + \mathcal{C}^i_m(\varepsilon^\ell)\mathcal{C}^j_m({\rho}^\ell)\varepsilon^\ell_m\nonumber + \mathcal{C}^i_m({\rho}^\ell)\mathcal{C}^j_m(\varepsilon^\ell)\varepsilon^\ell_m\nonumber
    \\
    & + \mathcal{C}^i_m(\varepsilon^\ell)\mathcal{C}^j_m(\varepsilon^\ell)\varepsilon^\ell_m\nonumber \bigg) \Delta x \Delta t \ .
\end{align*}
For brevity, we only display the estimates for the terms yielding the error order reported in Proposition \ref{prop: error A and b noise} as the rest of the terms are of higher order.  Namely, we will consider the terms 
\begin{align*}
    \Lambda^{i,j}_1 = \, & \sum_{\ell=1,m=-M}^{L,M}\mathcal{C}^i_m(\varepsilon^\ell)\mathcal{C}^j_m({\rho}^\ell){\rho}^\ell_m \Delta x \Delta t\ ,
\\
    \Lambda^{i,j}_2 = \, & \sum_{\ell=1,m=-M}^{L,M}\mathcal{C}^i_m({\rho}^\ell)\mathcal{C}^j_m({\rho}^\ell)\varepsilon^\ell_m \Delta x \Delta t \ , 
\end{align*}
and
\begin{align*}
   { \Lambda^{i,j}_3= } \, & { \sum_{\ell=1,m=-M}^{L,M}\mathcal{C}^i_m(\varepsilon^\ell)\mathcal{C}^j_m(\varepsilon^\ell){\rho}^\ell_m\Delta x \Delta t \ .}
\end{align*}
Note that $\Lambda_4^{i,j}$ is analogous to $\Lambda^{i,j}_1$, so it will have the same error. We begin with the estimate for $\Lambda^{i,j}_1$ given by 
\begin{align*}
    \norm{\Lambda^{i,j}_1}_{L^2(\varepsilon)} = \, & \bigg( \E\bigg|\sum_{\ell=1,m=-M}^{L,M}\mathcal{C}^i_m(\varepsilon^\ell)\mathcal{C}^j_m({\rho}^\ell){\rho}^\ell_m \Delta x \Delta t\bigg|^2\bigg)^{\frac{1}{2}}\,.
\end{align*}
By expanding the square, using the triangle inequality and noting that the expectation of terms for different points in the time mesh $\ell_1 \neq \ell_2$ for $\ell_1, \ell_2 \in \{1, \ldots, L \}$ vanishes, we obtain
\begin{align*}
    \norm{\Lambda^{i,j}_1}_{L^2(\varepsilon)} \leq & \,\bigg( \sum_{\ell=1,m=-M}^{L,M}\E\bigg|(\mathcal{C}^i_m(\varepsilon^\ell))^2(\mathcal{C}^j_m({\rho}^\ell){\rho}^\ell_m)^2 \Delta x^2 \Delta t^2\bigg|\bigg)^{\frac{1}{2}}
   \\
   & 
   + \bigg( \sum_{\ell=1,m_1\neq m_2}^{L,M}\E\bigg|(\mathcal{C}^i_{m_1}(\varepsilon^\ell)\mathcal{C}^j_{m_1}({\rho}^\ell){\rho}^\ell_{m_1})(\mathcal{C}^i_{m_2}(\varepsilon^\ell)\mathcal{C}^j_{m_2}({\rho}^\ell){\rho}^\ell_{m_2}) \Delta x^2 \Delta t^2\bigg|\bigg)^{\frac{1}{2}}\ .
   \\
    = &\, (a) + (b) \ . 
\end{align*}
 Note that for any $m \in \{-M,\ldots,M\}$ and any $\ell \in \{1,\ldots,L\}$ we have the following estimate 
\begin{align} 
\label{eq: determinitic_intermediate_estimate}
    |(\mathcal{C}^i_m({\rho}^\ell))^2| = & \, \bigg|\sum_{k=-M}^M (\nabla\Psi_i)^2_{m-k}(\rho^{\ell}_k)^2 \Delta x ^2 + \sum_{k_1 \neq k_2}^M (\nabla \Psi_i)_{m-k_1}(\nabla \Psi_i)_{m-k_2}\rho^{\ell}_{k_1}\rho^{\ell}_{k_2} \Delta x ^2 \bigg| \leq C  \ , 
\end{align}
for some positive constant $C=C(\norm{\rho}_\infty, \norm{\nabla\Psi}_\infty)$. In the estimates that follow we will write $C$ to denote a generic positive constant which can depend on $R, T, \norm{\rho}_\infty, \norm{\Psi}_\infty, \norm{\nabla\Psi}_\infty$ and can change from line to line. Recalling that we defined $M = 2R/\Delta x$ and $L= T/ \Delta t$, we have 
\begin{align*}
    (a) \leq & \ C\left( \frac{2R}{\Delta x }\frac{T}{\Delta t} \left(\frac{2R}{\Delta x} \norm{\nabla\Psi_i}_\infty^2 \sigma^2 \Delta x^2\right)\Delta x ^2 \Delta t^2\right)^{1/2}
    \\
     \leq & \  C \sigma \Delta x \Delta t ^{1/2} \ .
\end{align*}
For $(b)$ we begin by noting that, similarly to the estimate \eqref{eq: determinitic_intermediate_estimate} we have $|(\mathcal{C}^j_{m_1}({\rho}^\ell))(\mathcal{C}^j_{m_2}({\rho}^\ell))| \leq C = C(\norm{\rho}_\infty, \norm{\nabla\Psi}_\infty)$ for any $m_1,m_2 \in \{-M, \ldots, M\}$ and $\ell \in \{1,\ldots,L\}$. Then we have that 
\begin{align*}
    (b)  \leq & \,  C \left(\frac{4R^2}{\Delta x ^2} \frac{T}{\Delta t}\left(\frac{2R}{\Delta x} \norm{\nabla\Psi_i}^2 \sigma^2 \Delta x^2\right)\Delta x ^2 \Delta t^2\right)^{1/2}
\\
 \leq &\ C\sigma \sqrt{\Delta x \Delta t}\ ,
\end{align*}
from which $\norm{\Lambda^{i,j}_1}_{L^2(\varepsilon)}\leq C\sigma \sqrt{\Delta x \Delta t}$ follows. The calculations for $\Lambda^{i,j}_2$ are analogous. The only term that survives after expanding the square, using the triangle inequality and taking expectation is
\begin{align*}
\norm{\Lambda_2^{i,j}}_{L^2(\varepsilon)} = \, & \left(\sum_{\ell=1,m=-M}^{L,M}\E|\mathcal{C}^i_m({\rho}^\ell)\mathcal{C}^j_m({\rho}^\ell)\varepsilon^\ell_m\nonumber|^2  \Delta x^2 \Delta t ^2 \right)^{1/2}
\end{align*}
Then we have the following estimate
\begin{align*}
\norm{\Lambda_2^{i,j}}_{L^2(\varepsilon)} & \leq  \bigg(C\frac{2R T}{\Delta x \Delta t} \sigma^2 \Delta x^2 \Delta t^2\bigg)^{1/2}
\\
& \leq \ C \sigma\sqrt{\Delta x \Delta t}\ .
\end{align*}
{ Finally, for term $\Lambda^{i,j}_3$ we have 
\begin{align*}
\norm{\Lambda^{i,j}}_{L^2(\varepsilon)} \leq \, &\left(\sum_{\ell=1,m=-M}^{L,M}\E|\mathcal{C}^i_m({\varepsilon}^\ell)\mathcal{C}^j_m({\varepsilon}^\ell)\rho^\ell_m\nonumber|^2  \Delta x^2 \Delta t ^2 \right)^{1/2}
\\
&\  + \bigg( \sum_{\ell_1\neq \ell_2=1,m=-M}^{L,M}\E\bigg|(\mathcal{C}^i_{m}(\varepsilon^{\ell_1})\mathcal{C}^j_{m}({\varepsilon}^{\ell_1}){\rho}^{\ell_1}_{m})(\mathcal{C}^i_{m}(\varepsilon^{\ell_2})\mathcal{C}^j_{m}({\varepsilon}^{\ell_2}){\rho}^{\ell_2}_{m}) \Delta x^2 \Delta t^2\bigg|\bigg)^{\frac{1}{2}}
\\
&\  + \bigg( \sum_{\ell=1,m_1 \neq m_2=-M}^{L,M}\E\bigg|(\mathcal{C}^i_{m_1}(\varepsilon^{\ell})\mathcal{C}^j_{m_1}({\varepsilon}^{\ell}){\rho}^{\ell}_{m_1})(\mathcal{C}^i_{m_2}(\varepsilon^{\ell})\mathcal{C}^j_{m_2}({\varepsilon}^{\ell}){\rho}^{\ell}_{m_2}) \Delta x^2 \Delta t^2\bigg|\bigg)^{\frac{1}{2}}
\\
&\  + \bigg( \sum_{\ell_1\neq \ell_2=1,m_1 \neq m_2=-M}^{L,M}\E\bigg|(\mathcal{C}^i_{m_1}(\varepsilon^{\ell_1})\mathcal{C}^j_{m_1}({\varepsilon}^{\ell_1}){\rho}^{\ell_1}_{m_1})(\mathcal{C}^i_{m_2}(\varepsilon^{\ell_2})\mathcal{C}^j_{m_2}({\varepsilon}^{\ell_2}){\rho}^{\ell_2}_{m_2}) \Delta x^2 \Delta t^2\bigg|\bigg)^{\frac{1}{2}}
\\
= & \ (i) + (ii) + (iii) + (iv)\ .
\end{align*}
The expectation in all terms can be bounded by 
\[
C \sigma^4 \Delta x^4 \Delta t^2\ .
\]
Then, the worst estimate comes from $(iv)$ which has a larger number of terms giving 
\begin{align*}
   \norm{\Lambda^{i,j}_3}_{L^2(\varepsilon)} & \leq \left(C \frac{T^2}{\Delta t ^2}\frac{4R^2}{\Delta x^2}\sigma^4 \Delta x^4 \Delta t ^2\right)^{1/2}  \ 
   \\
   & \leq C \sigma^2 \Delta x\ .
\end{align*}}

The estimates for the rest of the terms in $\Lambda^{i,j}$ are obtained in an analogous way and one can check that they are of higher order than the terms presented above. Hence we can conclude that for any $(i,j) \in \{1,\ldots,n\}^2${
\[
\norm{\mathbf{{A}}_{n,M,L}(i,j)-\mathbf{\widetilde{A}}_{n,M,L}(i,j)}_{L^2(\varepsilon)}\leq C (\sigma\sqrt{\Delta x \Delta t } + \sigma^2\Delta x) \ .
\]}
\end{proof}

\paragraph{Proof of \eqref{eq: bound in b noise}}
\begin{proof}
    The structure of the proof is analogous to the one of the proof of Proposition \ref{prop: error A and b noise}. We need to estimate $\mathcal{D} :=\mathbf{{b}}_{n,M,L}(i)-\mathbf{\widetilde{b}}_{n,M,L}(i) $. Note that $\mathcal{D}$ does not depend on the indices $i \in \{1, \ldots, n\}$ as the final estimate is again independent of these. Let us put $\mathcal{R}^i_m(g^\ell) = \sum_{k=-M}^M (\Psi_i)_{m-k}g(t_\ell,x_k)\Delta x$ for any function $g$ defined on the mesh. Then $\mathcal{D}$ reads 
  {  
    \begin{align*}
    \mathcal{D} =  &\ -\frac{1}{T} \sum_{\ell=1,m=-M}^{L,M}\bigg[\bigg( (\widehat {\partial_t}\rho R_{n,M,L}^{i})_m^{\ell}+ (C_{n,M,L}^{i}F_{M,L})_m^{\ell} \bigg)
    \\
    & - \bigg(\delta^+_t(\rho^\ell_m + \varepsilon^\ell_m) \mathcal{R}^i_m(\rho^\ell+\varepsilon^\ell)+(\rho^\ell_m+\varepsilon^\ell_m)\delta^+_x(\rho^\ell_m + \varepsilon^\ell_m)\mathcal{C}^i_m(\rho^\ell+\varepsilon^\ell)\bigg)\Delta x \Delta t\ .
    \end{align*}}
By linearity of $\mathcal{C}$ and $\mathcal{R}$, expanding the second term yields 
\begin{align*}
    |\mathcal{D}| \leq |\zeta^{i}_t| + |\zeta^i_x|\ ,
\end{align*}
where $\zeta^i_t$ is the sum of all the terms in the expansion of the second term that depend on the noise and its discrete time derivative and $\zeta^i_x$ is the sum of all the terms that depend on the noise and its discrete space derivative, i.e.
 {\begin{align*}
  {\zeta_t^i = }& \,{ \frac{1}{T} \sum_{v=1}^3 \zeta_{t,v}^i}
    \\
    = & {  \frac{1}{T} \sum_{\ell=1,m=-M}^{L,M} \bigg[\delta_t^+ \varepsilon^\ell_m\mathcal{R}^i_m(\rho^\ell) +\delta_t^+ \varepsilon^\ell_m\mathcal{R}^i_m(\varepsilon^\ell) + \delta_t^+ \rho^\ell_m\mathcal{R}^i_m(\varepsilon^\ell) \bigg]\Delta x \Delta t \ ,}
\end{align*}}
and 
\begin{align*}
{\zeta_x^i} = &{ \,\frac{1}{T} \sum_{v=1}^{7} \zeta_{x,v}^i}
    \\
    = &  { \frac{1}{T} \sum_{\ell=1,m=-M}^{L,M}\bigg[\rho^\ell_m \delta_x^+ \rho^\ell_m \mathcal{C}^i_m(\varepsilon^\ell) + \rho^\ell_m \delta_x^+ \varepsilon^\ell_m \mathcal{C}^i_m(\rho^\ell) +\rho^\ell_m \delta_x^+ \varepsilon^\ell_m\mathcal{C}^i_m(\varepsilon^\ell)} 
    \\
    & {+ \varepsilon^\ell_m\delta_x^+\rho^\ell_m \mathcal{C}^i_m(\rho^\ell) + \varepsilon^\ell_m\delta_x^+\rho^\ell_m \mathcal{C}^i_m(\varepsilon^\ell) +  \varepsilon^\ell_m\delta_x^+\varepsilon^\ell_m \mathcal{C}^i_m(\rho^\ell) + \varepsilon^\ell_m\delta_x^+\varepsilon^\ell_m \mathcal{C}^i_m(\varepsilon^\ell)\bigg] \Delta x \Delta t \ .}
\end{align*}
As before, for brevity, we only display the calculations for the terms yielding the error reported in Proposition \ref{eq: bound in b noise} as the rest of the terms are of higher order. Namely, we will consider the terms
\begin{align*}
   { \zeta^{i}_{t,2} = \,  \sum_{\ell=1,m=-M}^{L,M} \delta_t^+ \varepsilon^\ell_m\mathcal{R}^i_m(\varepsilon^\ell)\Delta x \Delta t \ }, 
\end{align*}
and
\begin{align*}
   { \zeta^{i}_{x,6} = \,  \sum_{\ell=1,m=-M}^{L,M}\varepsilon^\ell_m\delta_x^+\varepsilon^\ell_m \mathcal{C}^i_m(\rho^\ell) \Delta x \Delta t \ }. 
\end{align*}
Let us begin estimating $ \norm{\zeta^{i}_{t,2}}_{L^2(\varepsilon)}$
\begin{align*}
   { \norm{\zeta^{i}_{t,2}}_{L^2(\varepsilon)} =\bigg(\E\bigg|\sum_{\ell=1,m=-M}^{L,M} \delta_t^+ \varepsilon^\ell_m\mathcal{R}^i_m(\varepsilon^\ell)\Delta x \Delta t\bigg|^2\bigg)^{1/2} \ }. 
\end{align*}
Expanding the square and an application of the triangle inequality yields
\begin{align*}
   { \norm{\zeta^{i}_{t,2}}_{L^2(\varepsilon)}}  \leq & \, { \bigg(\sum_{\ell=1, m=-M}^{L,M}\E\bigg|\delta_t^+ \varepsilon^\ell_m\mathcal{R}^i_m(\varepsilon^\ell)\Delta x \Delta t  \bigg|^2\bigg)^{1/2}}
     \\
     & {+\bigg(\sum_{\ell_1\neq \ell_2=1, m=-M}^{L,M}\E  \delta_t^+ \varepsilon^{\ell_1}_m \mathcal{R}^i(\varepsilon^{\ell_1})  \delta_t^+ \varepsilon^{\ell_2}_m\mathcal{R}^i(\varepsilon^{\ell_2})\Delta x^2 \Delta t^2 \bigg)^{1/2}}
    \\
     &{ +\bigg(\sum_{\ell=1, m_1 \neq m_2=-M}^{L,M}\E  \delta_t^+ \varepsilon^{\ell}_{m_1} \mathcal{R}^i(\varepsilon^{\ell}) \delta_t^+ \varepsilon^{\ell}_{m_2}\mathcal{R}^i(\varepsilon^{\ell})\Delta x^2 \Delta t^2 \bigg)^{1/2}}
     \\
     &{  +\bigg(\sum_{\ell_1\neq\ell_2=1, m_1 \neq m_2=-M}^{L,M}\E  \delta_t^+ \varepsilon^{\ell_1}_{m_1}\mathcal{R}^i(\varepsilon^{\ell_1}) \delta_t^+\varepsilon_{m_2}^{\ell_2}\mathcal{R}^i(\varepsilon^{\ell_2})\Delta x^2 \Delta t^2 \bigg)^{1/2}}
     \\
     =& (i) + (ii) + (iii) + (iv)   \ .
\end{align*}
We begin by bounding $(i)$ 

\begin{align*}
    {  (i) }&{ \leq \left( \frac{T}{\Delta t}\frac{2R}{\Delta x} \frac{C}{\Delta t^2} \sigma^4 \Delta x^2\Delta t^2\right)^{1/2} = C \sigma^2 {\Delta x}^{1/2}\Delta t^{-1/2} } .
\end{align*}
For terms $(ii),(iii)$ and $(iv)$ the expectation can be bounded by
\begin{align*}
   {  C \frac{\sigma^4}{\Delta t ^2} \Delta x^4 \Delta t^2} . 
\end{align*}
Since the only difference among these terms is how many of them we need to consider, it follows that $(iv)$ produces the worst case yielding
\begin{align*}
    { (iv)}& \leq {  \left(\frac{4R^2}{\Delta x ^2} \frac{T^2}{\Delta t^2} C \frac{\sigma^4}{\Delta t ^2} \Delta x^4 \Delta t^2 \right)^{1/2}}
    \\ 
    & = {  C\sigma^2 \Delta x\Delta t^{-1} \ .}
\end{align*}
This is the worst error associated to the noise with respect to time. The estimate for $\norm{\zeta^i_{x,6}}_{L^2(\varepsilon)}$ can be obtained in an analogous way. Indeed, we have by expanding the square and an application of the triangle inequality that
\begin{align*}
    {  \norm{\zeta_{x,6}}_{L^2(\varepsilon)} \leq }& {  \ \sum_{\ell=1, m=-M}^{L,M}\E|\varepsilon^\ell_m \delta_x^+ \varepsilon^\ell_m \mathcal{C}^i_m(\rho^\ell) |^2\Delta x^2 \Delta t^2} 
     \\
     & { + \sum_{\ell_1 \neq \ell_2=1, m=-M}^{L,M}\E|\varepsilon^{\ell^1}_m\varepsilon^{\ell_2}_m\delta_x^+ \varepsilon^{\ell_1}_m \delta_x^+ \varepsilon^{\ell_2}_m\mathcal{C}^i_m(\rho^{\ell_2})\mathcal{C}^i_m(\rho^{\ell_1})|^2\Delta x^2 \Delta t^2 }
     \\
     &{  + \sum_{\ell=1, m_1 \neq m_2=-M}^{L,M}\E|\varepsilon^{\ell}_{m_1}\varepsilon^{\ell}_{m_2}\delta_x^+ \varepsilon^{\ell}_{m_1} \delta_x^+ \varepsilon^{\ell}_{m_2}\mathcal{C}^i_{m_1}(\rho^{\ell})\mathcal{C}^i_{m_2}(\rho^{\ell})|^2\Delta x^2 \Delta t^2 }
     \\
     & { + \sum_{\ell_1 \neq \ell_2=1, m_1 \neq m_2=-M}^{L,M}\E|\varepsilon^{\ell_1}_{m_1}\varepsilon^{\ell_2}_{m_2}\delta_x^+ \varepsilon^{\ell_1}_{m_1} \delta_x^+ \varepsilon^{\ell_2}_{m_2}\mathcal{C}^i_{m_1}(\rho^{\ell_1})\mathcal{C}^i_{m_2}(\rho^{\ell_2})|^2\Delta x^2 \Delta t^2  }
     \\
     & = (i) + (ii) + (iii) + (iv)
\end{align*}
Term $(i)$ can be bounded as
\[
{ \norm{(i)}_{L^2(\varepsilon)}\leq C\sigma^2 \Delta t^{1/2} \Delta x^{-1/2}\ .}
\]
Furthermore, the expectations in terms $(ii), (iii)$ and $(iv)$ can be bounded by
\[
{  C\sigma^4 \Delta t^2 \ .}
\]

As before, since the only difference among these terms is how many of them we need to consider, it follows that $(iv)$ produces the worst error yielding
\begin{align*}
(iv) &{ \leq \left(\frac{4R^2}{\Delta x^2}\frac{T^2}{\Delta t ^2} C \sigma^4 \Delta t^2\right)^{1/2}}
\\
& = {  C \sigma^2 \Delta x^{-1}}
\end{align*}
Thus, we can conclude that 
\[
{  \norm{\mathbf{{b}}_{n,M,L}(i)-\mathbf{\widetilde{b}}_{n,M,L}(i)}_{L^2(\varepsilon)} \leq C \sigma^2(\Delta x^{-1}+\Delta x\Delta t^{-1})}
\]
\end{proof}

\section*{Acknowledgments}
JAC and GER were supported by the Advanced Grant Nonlocal\--CPD (Non\-local PDEs for Complex Particle Dynamics: Phase Transitions, Patterns and Synchronization) of the European Research Council Executive Agency (ERC) under the European Union’s Horizon 2020 research and innovation programme (grant agreement No. 883363).
JAC was also partially supported by EPSRC grant numbers EP/T022132/1 and EP/V051121/1. GER acknowledges the support from the research group 2021 SGR 00087 and the project macroKNIGHTs (PID2022-143012NA-100) funded by the Spanish Ministry of Science and Innovation.
LM was supported by the EPSRC Centre for Doctoral Training in Mathematics of Random Systems: Analysis, Modelling and Simulation (EP/S023925/1).
S. Tang received partial support from the Hellman Faculty Fellowship and the Faculty Early Career Development Awards, funded by the University of California Santa Barbara and the NSF DMS under grant number 2111303 and 2340631. S. Tang extends gratitude to Ben Adcock for valuable discussions on LASSO. Additionally, a portion of this research was conducted during visits by JAC and ST to the Simons Institute for the Theory of Computing. LM wants to thank Ben Hambly and Markus Schmidtchen for their helpful comments and suggestions. 

\printbibliography

\end{document}